\documentclass{siamart251216}

\usepackage{amsfonts,amssymb}
\usepackage{amsopn}
\usepackage{mathtools}
\usepackage{xcolor}
\usepackage{array}
\usepackage{booktabs}
\usepackage{graphicx}
\usepackage{epstopdf}
\usepackage{float}
\usepackage{subfigure}
\usepackage[algo2e,boxed,ruled,vlined]{algorithm2e}

\DontPrintSemicolon
\SetKwInput{KwRequire}{Require}
\SetKwInput{KwEnsure}{Ensure}
\SetKw{KwRet}{return}
\RestyleAlgo{boxruled}
\SetAlFnt{\small}
\SetAlCapFnt{\small}
\SetAlCapNameFnt{\small}

\newsiamremark{remark}{Remark}
\crefname{remark}{Remark}{Remarks}
\newsiamremark{assumption}{Assumption}
\crefname{assumption}{Assumption}{Assumptions}

\newcommand{\RR}{\mathbb{R}}

\newcommand{\R}{\mathbb{R}}
\newcommand{\SSS}{\mathbb{S}}
\newcommand{\Sn}{\SSS^{n}}
\newcommand{\Snp}{\SSS^{n}_{+}}
\newcommand{\Plin}{\mathcal{P}_{\mathrm{lin}}}

\newcommand{\ip}[2]{\langle #1,\,#2\rangle}
\newcommand{\tr}{\operatorname{tr}}
\newcommand{\diag}{\operatorname{diag}}

\newcommand{\argmax}{\operatorname{argmax}}
\newcommand{\argmin}{\operatorname{argmin}}

\newcommand{\bA}{A}
\newcommand{\bB}{B}
\newcommand{\bC}{C}

\newcommand{\bI}{I}
\newcommand{\bP}{P}
\newcommand{\bQ}{Q}
\newcommand{\bS}{S}
\newcommand{\bstarS}{S^{\star}}
\newcommand{\bU}{U}
\newcommand{\bV}{V}

\newcommand{\X}{X}
\newcommand{\bX}{X}
\newcommand{\bbarX}{\bar{X}}

\newcommand{\bLa}{\Lambda}

\newcommand{\bu}{u}

\newcommand{\bx}{x}
\newcommand{\by}{y}
\newcommand{\bz}{z}

\newcommand{\bq}{q}
\newcommand{\blambda}{\lambda}

\newcommand{\uball}{\mathcal{B}}
\newcommand{\sball}{\Snp\cap\mathcal{B}}

\newcommand{\gcut}{\mathcal{C}}

\newcommand{\norm}[1]{\left\lVert #1\right\rVert}
\newcommand{\normF}[1]{\norm{#1}_{F}}
\newcommand{\normtwo}[1]{\norm{#1}_{2}}

\newcommand{\Xpos}{\bX_{+}}
\newcommand{\Xneg}{\bX_{-}}

\headers{Spectral-gauge cuts for semidefinite programming}{A. Sasaki, S. Demassey, and V. Sessa}

\title{Spectral-gauge cuts for semidefinite programming}
\author{Antonio Sasaki\thanks{CMA, Mines Paris -- PSL, France.}
\and Sophie Demassey\footnotemark[1]
\and Valentina Sessa\footnotemark[1]}

\makeatletter
\@ifundefined{algorithm}
  {\newenvironment{algorithm}[1][]{\begin{algorithm2e}[#1]}{\end{algorithm2e}}}
  {\renewenvironment{algorithm}[1][]{\begin{algorithm2e}[#1]}{\end{algorithm2e}}}
\makeatother
\renewcommand{\mathbf}[1]{#1}
\renewcommand{\boldsymbol}[1]{#1}
\begin{document}
\maketitle
\begin{abstract}
We use symmetric gauge theory to develop a general class of cutting-plane algorithms for semidefinite programming. We formulate a separation problem based on spectral normalizations induced by gauges and derive a closed-form separation oracle. This oracle yields an implementable cut-generation procedure that, by varying the gauge, recovers standard cut families and generates new ones with tunable spectral structure. We embed the oracle within Kelley’s method and characterize convergence as a function of the chosen gauge and initial conic relaxation. Numerical experiments on small and large instances of box-constrained quadratic programming and sparse principal component analysis illustrate the versatility and performance of the proposed framework.
\end{abstract}

\begin{keywords}
semidefinite programming, cutting-plane algorithm, symmetric gauges, sparse principal component analysis, box-constrained quadratic programming
\end{keywords}

\begin{MSCcodes}
90C22, 90C25, 90C57, 15A18
\end{MSCcodes}


\section{Introduction}\label{sec:intro}
We consider semidefinite programs (SDP) in the form~\cite{vandenberghe_boyd_1996}
\begin{equation}
  \label{eq:sdp}
  \begin{aligned}
  \min_{\bX \in \Sn} \quad & \ip{\bC}{\bX} \\
  \text{s.t.}\quad
  & \bX \in \Plin, \\
  & \bX \succeq \mathbf{0},
  \end{aligned}
\end{equation}
where $\bC\in\Sn$ and $\Plin\subseteq\Sn$ is a polyhedron describing the linear side constraints. The space $\Sn$ is equipped with the Frobenius inner product $\ip{\bC}{\bX}\coloneqq \tr(\bC^\top\bX)=\sum_{i,j}C_{ij}X_{ij}$. The notation $\bX\succeq\mathbf{0}$ means that $\bX$ is positive semidefinite (PSD), i.e., $\mathbf{v}^\top\bX\mathbf{v}\ge 0$ for all $\mathbf{v}\in\mathbb{R}^n$. Since $\Plin$ is the intersection of finitely many affine hyperplanes and closed halfspaces in $\Sn$, the feasible set $\mathcal F=\Plin\cap\Snp$ is a spectrahedron~\cite{ramana_goldman_1995,blekherman_parrilo_thomas_2012}, where $\Snp\coloneqq\{\bX\in\Sn : \bX\succeq \mathbf{0}\}$ is the PSD cone.

Interior-point methods are among the most reliable generic algorithms for solving SDPs to high accuracy~\cite{nesterov_nemirovskii_1994,bellavia_gondzio_porcelli_2021}. Their scalability, however, is limited by the dense linear systems that arise at each iteration. In contrast with linear programming, exploiting sparsity in SDPs is often delicate and may require introducing many auxiliary variables~\cite{andersen_etal_2011}. First-order alternatives, including augmented Lagrangian methods~\cite{povh_rendl_wiegele_2006,malick_etal_2009,chen_etal_2025} 
and the alternating direction method of multipliers~\cite{sun_etal_2020,battista_de_santis_2025}, avoid expensive Hessian operations and scale to larger instances. Their main limitation is reduced numerical precision, which can be problematic when tight dual bounds are needed in combinatorial or nonconvex optimization~\cite{daspremont_etal_2007,berk_bertsimas_2019,locatelli_etal_2025,yildirim_2026}.

A widely used alternative is Kelley's cutting-plane approach~\cite{kelley_1960}, which replaces the PSD constraint $\bX\succeq\mathbf{0}$ with a tractable relaxation that is iteratively refined by separating valid inequalities, called \emph{cuts}~\cite{krishnan_mitchell_2006,wang_tanaka_yoshise_2021,bertsimas_cory_wright_2020,sherali_fraticelli_2002}. This approach is particularly useful for convex, nonconvex, and discrete quadratic optimization, where SDP models appear as relaxations of the quadratic relation $\bX= \mathbf{x}\mathbf{x}^\top$~\cite{shor_1987}. In this setting, a linear or conic cutting-plane method that generates a sequence of dual bounds can be kept computationally light by limiting the number of iterations and can be embedded effectively within global-optimization frameworks such as branch-and-bound~\cite{dey_etal_2022}.
A standard cutting-plane framework relies on the semi-infinite linear description
\[
\Snp=\mathcal{P}_1\coloneqq\{\bX\in\Sn : \mathbf{v}^\top \bX\mathbf{v}\ge 0,\ \forall \mathbf{v} \in \mathbb{R}^n,\ \|\mathbf{v}\|_2=1\}.
\]
At a given iterate $\mathbf{\bar{X}}\in\Sn$, separation amounts to finding a negative eigenvalue $\lambda\in\R$, typically the smallest one, together with an associated normalized eigenvector $\mathbf{v}\in\mathbb{R}^n$ such that $\mathbf{v}^\top\mathbf{\bar{X}}\mathbf{v}=\lambda<0$. The resulting valid inequality 
$\ip{\mathbf{v}\mathbf{v}^\top}{X}\ge 0$
is an \emph{eigenvalue cut} (or \emph{eigencut} for short). These cuts, introduced by Ramana~\cite{ramana_1993}, were also considered in~\cite{shor_1998} and in the reformulation-linearization technique~\cite{sherali_fraticelli_2002}, and have since been used regularly in global optimization~\cite{sherali_dalkiran_desai_2012,dey_etal_2022}. Adding many eigenvalue cuts to an initial linear relaxation of~\eqref{eq:sdp} can, however, lead to numerical instability in the master LP. To mitigate this issue, sparse eigenvalue cuts restrict the separation problem to eigenvectors with at most $m$ nonzero entries~\cite{baltean_lugojan_etal_2019,qualizza_belotti_margot_2012,dey_etal_2022}.

Bertsimas and Cory-Wright~\cite{bertsimas_cory_wright_2020} follow a complementary approach by seeking tighter outer approximations in SDP relaxations for machine-learning applications. They relate the empirical performance of cutting-plane methods to the strength of the initial relaxation and use second-order cone relaxations for sparse principal component analysis. They also introduce a deeper class of cuts for nuclear-norm minimization by exploiting the self-duality of the PSD cone
\[
\Snp=\mathcal{P}_\infty\coloneqq \{\bX\in\Sn: \ip{\mathbf{S}}{\bX}\ge 0,\ \forall \mathbf{S}\in\Snp\}.
\]
This identity yields a semi-infinite linear reformulation of~\eqref{eq:sdp} that extends eigenvalue cuts to inequalities of the form
\begin{align}\label{eq:capCS}
\gcut_S:=\ip{\mathbf{S}}{\bX}\ge 0,
\end{align}
with $S$ of arbitrary rank. The eigenvalue inequalities defining $\mathcal P_1$ are the rank-one members of this family. The nuclear cuts of~\cite{bertsimas_cory_wright_2020} can be seen as the most violated inequalities of this type when the separator is normalized in the spectral norm.

\paragraph{Our contributions}
In line with the growing field of convex algebraic geometry~\cite{blekherman_parrilo_thomas_2012}, we use symmetric gauges to give a unified treatment of these cut families and of the continuum between them. Since the inequalities $\gcut_S$ are invariant under positive scaling, the separator $S$ can be restricted to a compact representative slice of $\Snp$. We study the case in which this slice is the unit ball of a unitarily invariant matrix norm. Such norms are induced by symmetric gauges $\phi$, i.e., permutation- and sign-invariant vector norms, through the spectral mapping $\lambda$. Although the eigenvalue map is generally nonsmooth, unitarily invariant norms inherit useful variational and dual properties from their inducing gauges~\cite{daniilidis_etal_2013}.

Our first contribution is to characterize the canonical separation problem over the unit ball of a unitarily invariant norm $\phi\circ\lambda$. We express the maximum violation
in closed form as the image of the negative eigenvalues of the current iterate through the dual gauge $\phi^\circ$, and build the cut from the associated eigenvectors. This yields a practical cut-generation procedure for what we refer to as \emph{spectral-gauge cuts}.

Our second contribution is a convergence analysis for Kelley's method with these spectral-gauge cuts. The analysis uses two geometric quantities: the size of the initial relaxation and the depth of the cuts induced by the chosen gauge. The proof extends the arguments of~\cite{bertsimas_cory_wright_2020} to arbitrary symmetric gauges and initial relaxations. 

The $\ell_p$ norms provide a concrete hierarchy of spectral-gauge cut templates, from
eigenvalue cuts for $\phi=\ell_1$, to nuclear cuts for $\phi=\ell_\infty$, and a continuum 
for $1<p<\infty$, 
where the separation oracle essentially requires
the computation of negative eigenvalues and an associated eigenbasis.

We then add two
restrictions on the separators: 
by enforcing entrywise sparsity, we
connect our framework to 
both sparse eigencuts~\cite{qualizza_belotti_margot_2012,baltean_lugojan_etal_2019,dey_etal_2022} and the factor-width matrix cone hierarchy~\cite{boman_etal_2005,permenter_parrilo_2018};
by enforcing spectral sparsity with a rank limit $k$, we derive a lighter oracle only involving the $k$ most negative eigenvalues.

Numerical experiments on small and large instances of box-constrained quadratic programming and sparse principal component analysis 
illustrate
the practical impact of 
the framework under various configurations.
Overall,
LP relaxation combined with $\ell_1$-eigenvalue cuts or $\ell_2$-Frobenius cuts, with or without rank limit, strikes a good balance between bound quality and computational cost. However, no spectral-gauge cut template is strictly dominated by the others, and each can be selected for its respective strengths, depending on the class and size of a problem, or the iteration number allowed for computing the exact SDP bound or a cheaper dual bound.


\paragraph{Organization of the paper}
Section~\ref{sec:notation} introduces notation and preliminary material on symmetric gauges and unitarily invariant norms. Section~\ref{sec:spectral-gauge} presents the spectral-gauge separation problem and proves the closed-form separation theorem. Section~\ref{sec:instantiation} specializes the theorem to the $\ell_p$ hierarchy, sparse-support cuts, and rank-limit separators. Section~\ref{sec:algorithm} gives the generic cutting-plane algorithm and its convergence analysis. Section~\ref{sec:experiments} reports numerical experiments.

\section{Notation and preliminaries}\label{sec:notation}

Scalars and vectors are denoted by lowercase letters, and matrices by uppercase letters. Let $[n]:= \{1,\dots,n\}$ for short. We use $t_{+}:=\max\{t,0\}$ and $|t|$ for the positive part and absolute value of a scalar $t\in\RR$. For a vector $\bu\in\RR^n$, we define $\bu_+\in\RR_+^n$ and $|\bu|\in\RR_+^n$ componentwise by $(\bu_+)_i=(u_i)_+$ and $|\bu|_i=|u_i|$. The support of $\bu$, denoted $\operatorname{supp}(\bu)$, is the set of indices corresponding to nonzero components. By extension, we define the support of a symmetric matrix $X\in\Sn$ as the set of row indices with nonzero components. We denote by $e_i\in\mathbb{R}^n$ the $i$-th canonical vector. For $\bu\in\R^n$, let $\bu^\uparrow$ and $\bu^\downarrow$ denote the nondecreasing and nonincreasing rearrangements of $\bu$, respectively.

The \emph{spectral mapping} $\boldsymbol{\lambda}$ maps a symmetric matrix to the vector of its eigenvalues, arranged in nondecreasing order and repeated according to multiplicity. For $\bX\in\Sn$, the positive and negative parts of its eigenvalue vector $\lambda(X)=(\lambda_i(X))_i$  by 
$$\boldsymbol{\lambda}_+(\bX) := (\lambda_i(X)_+)_i \in\R_+^n
\mbox{ and } \boldsymbol{\lambda}_- (\bX):= (-\lambda_i(X)_+)_i \in\R_+^n, $$
and the associated diagonal matrices in $\Sn$ are $
\bLa(\bX):=\diag(\lambda(\bX))$, $\bLa_+(\bX) := \diag(\boldsymbol{\lambda}_+(\bX))$, and $\bLa_-(\bX) := \diag(\boldsymbol{\lambda}_-(\bX)).$ 
Throughout the paper, we denote the minimum eigenvalue of the matrix $X$ with both $\lambda_1(X)$ and $\lambda_{\min}(X)$.

Since $\bX$ is symmetric, there exists a real orthogonal matrix $\bQ$, i.e., $\bQ^\top \bQ = \bQ \bQ^\top = \bI_n$, such that $\bX = \bQ \bLa(\bX) \bQ^\top$. We define the \emph{spectral positive part} $\Xpos\in\Sn$ and \emph{spectral negative part} $\Xneg\in\Sn$ as
\begin{equation}\label{eq:spectral-parts}
\Xpos := \bQ\, \bLa_+(\bX) \,\bQ^\top,\quad
\Xneg := \bQ\,\bLa_-(\bX)\,\bQ^\top.
\end{equation}
Then $\Xpos\in\Snp$, $\Xneg\in\Snp$, $\bX=\Xpos-\Xneg$, and $\Xpos \Xneg=\mathbf{0}$.

Since a symmetric matrix $X$ is normal, its singular values, denoted by $\sigma(X)$ in nondecreasing order and counted with multiplicity, satisfy $\sigma(X)=|\lambda(X)|^\uparrow$~\cite[Thm.~2.6.3]{horn_johnson_2013},
and Von Neumann's trace inequality~\cite[Thm.~7.4.1.1]{horn_johnson_2013} reads:
\begin{equation}\label{eq:trineq}
|\ip{\bA}{\bB}|=|\tr(\bA\bB)|
\le \bigl(|\blambda(\bA)|^\uparrow\bigr)^\top\bigl(|\blambda(\bB)|^\uparrow\bigr)
\quad\forall\bA,\bB\in\Sn.
\end{equation}
Furthermore, $X\in\Sn$ is positive semidefinite if and only if all its eigenvalues are nonnegative, equivalently $\lambda_{-}(X)=0$ and $\lambda(X)=|\lambda(X)|^\uparrow=\sigma(X)$~\cite[Thm.~4.1.10]{horn_johnson_2013}.

\begin{definition}[Dual norm~{\cite[Def.~5.4.12]{horn_johnson_2013}}]
\label{def:dualnorm}
Let $\phi$ be a norm on $\mathbb{R}^n$.
The \emph{dual} of $\phi$ is the function $\phi^\circ:\mathbb{R}^n\to\mathbb{R}_+$ defined by
    \begin{equation}\label{eq:dualnorm}
    \phi^\circ(\by)
    :=
    \max_{\bz\in\mathbb{R}^n}
    \bigl\{ |\ip{\by}{\bz}| : \, \phi(\bz)\le 1 \bigr\},
    \quad \forall \by\in\mathbb{R}^n.
    \end{equation}
\end{definition}
The dual $\phi^\circ$ defines a norm on $\R^n$ that satisfies the generalized Cauchy-Schwarz inequality~\cite[Lem.~5.4.13]{horn_johnson_2013}:
\begin{equation}\label{eq:cs}
|\mathbf{y}^\top\mathbf{z}| \le \phi^\circ(\mathbf{y})\,\phi(\mathbf{z})\text{ for all }\by,\bz\in\R^n.
\end{equation}

\begin{definition}[Symmetric gauge {\cite[Def.~7.4.7.1]{horn_johnson_2013}}]
\label{def:norm}
Let $\phi$ be a norm on $\mathbb{R}^n$.
\begin{enumerate}
    \item $\phi$ is \emph{absolute} if $\phi(\bz)=\phi(|\bz|)$ $\forall \bz\in\mathbb{R}^n$.

    \item $\phi$ is a \emph{symmetric gauge} if it is absolute and
    $\phi=\phi\circ P$ for every permutation matrix $\bP\in\{0,1\}^{n\times n}$, that is, a square real matrix with exactly one entry equal to $1$ in each row and column and all other entries equal to $0$.
\end{enumerate}
\end{definition}
From~\cite[Thm.~5.4.19]{horn_johnson_2013}, a norm $\phi$ on $\R^n$ is absolute if and only if it is monotone (i.e., $\phi(\bx)\le \phi(\bz)$ $\forall \bx,\bz\in\mathbb{R}^n$ with $|\bx|\le|\bz|$), and its dual $\phi^\circ$ is also an absolute norm. By monotonicity and sign-invariance, only vectors $z$ with nonnegative entries and support contained in that of $y$ need to be considered in achieving the maximum in~\eqref{eq:dualnorm} when $y$ is nonnegative. Hence, the restriction of $\phi^\circ$ to the nonnegative orthant can be rewritten as
\begin{equation}\label{eq:dual-nonneg}
\phi^\circ(\by)=\max\{\by^\top\bz: \, {\mathbf{z}\in\R^n_+}, \ \phi(\bz)\le 1,  \ \operatorname{supp}(z)\subseteq \operatorname{supp}(y)\}, \quad \forall \by \in\R^n_+.
\end{equation}

The symmetric gauges are also known as \emph{symmetric absolute norms}~\cite[p.~335]{horn_johnson_2013}.
Specializing Von Neumann's result to real symmetric matrices, the next lemma shows that the symmetric gauges on $\R^n$ are in one-to-one correspondence with the 
unitarily invariant matrix norms on $\Sn$, 
i.e., $\norm{\bU\bA\bV}=\norm{\bA}$, $\forall A\in\Sn$, $\forall\,\bU,\bV$ orthogonal,
as well as their respective duals,
through composition with the spectral mapping $\lambda$.

\begin{lemma}[Unitarily invariant matrix norm on $\Sn$]
\label{lem:uin}
Let $\phi$ be a symmetric gauge on $\R^n$.
Define
\[
\norm{S}_{\phi}:=\phi\big(\boldsymbol{\lambda}(S)\big),\quad \forall\,S\in\Sn.
\]
Then $\norm{\cdot}_{\phi}$ is a unitarily invariant matrix norm on $\Sn$.

Conversely, every unitarily invariant matrix norm on $\R^{n\times n}$ restricted to $\Sn$ can be written as $S\mapsto \phi(\boldsymbol{\lambda}(S))$ for a (unique) symmetric gauge $\phi$ on $\R^n$.

The dual matrix norm,
defined by~\eqref{eq:dualnorm} with respect to the Frobenius inner product on $\R^{n\times n}$, is also unitarily invariant and its restriction to $\Sn$ satisfies:
\[
(\phi\circ\lambda)^\circ(X)
=\norm{X}^\circ_{\phi}
=\max_{S\in\Sn}\left\{|\ip{S}{X}\bigr|:\, \norm{S}_{\phi}\le 1\right\} = (\phi^\circ\circ\lambda)(X),
\quad \forall\,X\in\Sn,
\]
where $\phi^\circ$ is the dual gauge of $\phi$ on $\R^n$.
\end{lemma}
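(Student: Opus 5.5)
We prove the three claims in turn, relying throughout on Von Neumann's trace inequality~\eqref{eq:trineq}, the generalized Cauchy--Schwarz inequality~\eqref{eq:cs}, and the identity $\sigma(S)=|\lambda(S)|^\uparrow$.

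\emph{Norm axioms and unitary invariance.} Positivity and homogeneity are immediate: $\phi$ is a norm, and $\lambda(S)=0$ iff $S=0$. Since $\phi$ is absolute and permutation invariant, $\norm{S}_\phi=\phi(|\lambda(S)|^\uparrow)=\phi(\sigma(S))$. For $U,V$ orthogonal, $USV$ has the same singular values as $S$, so the quantity $\phi(\sigma(\cdot))$ is unitarily invariant. The triangle inequality is the only substantive point, and we will derive it from a duality formula rather than from majorization. We first show
\[
\phi(\lambda(S))=\max\{\ip{S}{Y}: Y\in\Sn,\ \phi^\circ(\lambda(Y))\le 1\}.
\]
For the upper bound on the right-hand side, \eqref{eq:trineq} and \eqref{eq:cs} give $\ip{S}{Y}\le (|\lambda(S)|^\uparrow)^\top(|\lambda(Y)|^\uparrow)\le \phi(\lambda(S))\,\phi^\circ(\lambda(Y))$, using that both gauges ignore sign and order. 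For attainment, write $S=Q\Lambda(S)Q^\top$ and choose $z$ with $\phi^\circ(z)\le1$ and $\lambda(S)^\top z=\phi^{\circ\circ}(\lambda(S))=\phi(\lambda(S))$; this uses the finite-dimensional bidual identity $\phi^{\circ\circ}=\phi$. Then $Y=Q\diag(z)Q^\top$ attains the bound. Because $\phi^\circ$ is itself a symmetric gauge, $\phi^\circ(\lambda(Y))$ equals $\phi^\circ(z)$. The right-hand side is a supremum of linear functions of $S$, hence sublinear, which gives the triangle inequality.

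\emph{Converse.} Let $\norm{\cdot}$ be unitarily invariant on $\R^{n\times n}$ and set $\phi(x):=\norm{\diag(x)}$. This is a norm on $\R^n$. Conjugating by permutation matrices shows $\phi$ is permutation invariant, and conjugating by diagonal $\pm1$ matrices shows $\phi$ is absolute, so $\phi$ is a symmetric gauge. For $S=Q\Lambda(S)Q^\top$, unitary invariance gives $\norm{S}=\norm{\Lambda(S)}=\phi(\lambda(S))$. Uniqueness follows because any $\phi'$ with $\phi'\circ\lambda=\norm{\cdot}$ on $\Sn$ must satisfy $\phi'(x)=\phi'(\lambda(\diag x))=\norm{\diag x}$, where the first equality uses permutation invariance of $\phi'$.

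\emph{Dual formula.} Over $\R^{n\times n}$, unitary invariance of the dual norm follows from the substitution $S\mapsto U^\top S V^\top$ in the defining maximum. For the restriction to $\Sn$, the bound $\le\phi^\circ(\lambda(X))$ follows from \eqref{eq:trineq} and \eqref{eq:cs} exactly as above. Equality is attained by $S=Q\diag(z)Q^\top$, where $Q$ diagonalizes $X$ and $z$ is a maximizer in~\eqref{eq:dualnorm} for $y=\lambda(X)$. One caveat remains. The dual over $\R^{n\times n}$ maximizes over nonsymmetric $S$, so we must check that the maximum over symmetric $S$ coincides with it when $X\in\Sn$. For nonsymmetric $S$ we bound via the singular-value form of Von Neumann's inequality, $|\tr(S^\top X)|\le\sigma(S)^\top\sigma(X)$, which again gives $\le\phi(\sigma(S))\,\phi^\circ(\sigma(X))$. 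Since $X$ is symmetric, $\phi^\circ(\sigma(X))=\phi^\circ(\lambda(X))$.

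The main obstacle is the triangle inequality in the first claim. It is handled by the variational representation, which itself rests on the bidual identity $\phi^{\circ\circ}=\phi$ (assumed standard, as in Horn--Johnson) and on Von Neumann's inequality~\eqref{eq:trineq}.
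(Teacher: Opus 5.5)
Your strategy is sound, but one step fails as written. In the converse, conjugating $\diag(x)$ by a diagonal $\pm1$ matrix $D$ gives $D\diag(x)D^\top=\diag(x)$, because diagonal matrices commute and $D^2=I$. So conjugation says nothing about signs. You need one-sided invariance instead: take $U=D$, $V=I$ to get $\norm{\diag(Dx)}=\norm{D\diag(x)}=\norm{\diag(x)}$. This is not cosmetic, since invariance under $A\mapsto UAU^\top$ alone does not force absoluteness. For example, $A\mapsto|\tr A|+\normF{A}$ has that invariance, yet it induces $x\mapsto|\sum_i x_i|+\|x\|_2$, which is permutation invariant but not absolute. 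The independence of $U$ and $V$ in the hypothesis is exactly what this step uses.

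There is also a smaller omission. Your variational formula gives the triangle inequality only on $\Sn$. Your first and last paragraphs, however, treat $\phi\circ\sigma$ as a norm on $\R^{n\times n}$: you check unitary invariance and take the dual there. The same argument closes this. Use an SVD $A=U\diag(\sigma(A))V^\top$, the witness $U\diag(z)V^\top$ with $z\ge 0$, and the singular-value form of Von Neumann's inequality, which you already invoke.

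With these repairs your proof is correct, and it takes a different, more self-contained route than the paper. The paper's proof works as follows.
\begin{itemize}
\item It obtains the first claim by citing Horn--Johnson's theorem that $\phi\circ\sigma$ is a unitarily invariant norm on $\R^{n\times n}$, then observes $\phi(\sigma(S))=\phi(\lambda(S))$ on $\Sn$.
\item It does not argue the converse separately.
\item It gets unitary invariance of the dual from the cited fact that a norm is unitarily invariant iff its dual is.
\item It reduces the maximum over $\R^{n\times n}$ to $\Sn$ by symmetrization, $A\mapsto(A+A^\top)/2$. This preserves $\ip{A}{X}$ and does not increase $\norm{A}_\phi$, by the triangle inequality and transpose invariance.
\end{itemize}
The upper bound via \eqref{eq:trineq} and \eqref{eq:cs}, and the witness $Q\diag(z)Q^\top$, are the same as yours.

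Your route differs at each of the other points. You derive the norm property from the bidual identity instead of citing it, and you make the converse explicit. You check dual invariance by direct substitution. You replace symmetrization with the singular-value trace bound, so that step needs no triangle inequality on nonsymmetric matrices. The paper's route is shorter and needs only the symmetric form of Von Neumann's inequality, but it delegates the substantive part, the triangle inequality, to the reference.
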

\begin{proof}
Mapping $\phi\circ\sigma$ is an unitarily invariant matrix norm~\cite[Thm.~7.4.7.2]{horn_johnson_2013} on $\R^{n\times n}$, then, restricted to $\Sn$,
$\phi(\sigma(S))=\phi(|\lambda(S)|^\uparrow)=\phi(\lambda(S))$ by symmetry of $\phi$.

For the second assertion, we use the result that a norm $\norm{\cdot}$ is unitarily invariant if and only if its dual $\norm{\cdot}^\circ$ is also unitarily invariant~\cite[Thm.~5.6.39]{horn_johnson_2013}.
    By~\eqref{eq:dualnorm}, the dual norm is defined on $\R^{n\times n}$ as
    $\norm{X}_\phi^\circ=\max_{A\in\R^{n\times n}}\left\{|\ip{A}{X}\bigr|:\ \norm{A}_{\phi}\le 1\right\}$
    but it is enough to consider symmetric matrices $A$ when computing this maximum, since
$S=\frac{A+A^\top}{2}\in\Sn$, and then $\ip{S}{X}=\ip{A}{X}$ with
$\norm{S}_{\phi}\le \norm{A}_{\phi}$.

For any feasible $S\in\Sn$ with $\phi(\lambda(S))\le 1$, inequalities~\eqref{eq:trineq} and
\eqref{eq:cs} imply
\[
|\ip{S}{X}|
\le \bigl(|\lambda(S)|^\uparrow\bigr)^\top\bigl(|\lambda(X)|^\uparrow\bigr)
\le \phi\bigl(|\lambda(S)|^\uparrow\bigr)\,\phi^\circ\bigl(|\lambda(X)|^\uparrow\bigr)
\le \phi^\circ(\lambda(X)).
\]
Hence $\|X\|_\phi^\circ\le \phi^\circ(\lambda(X))$.
Conversely, choose a maximizer $z\in\R^n$ of~\eqref{eq:dualnorm} defining $\phi^\circ(\lambda(X))$, i.e., with $\phi(z)\le 1$ and $|z^{\top}\lambda(X)|=\phi^\circ(\lambda(X))$. If $X=Q\diag(\lambda(X))Q^\top$, define $S:=Q\diag(z)Q^\top\in\Sn$. Then $\norm{S}_\phi=\phi(\lambda(S))=\phi(z)\le 1$ by symmetry of $\phi$ and
$
|\langle S,X\rangle|
=|z^{\top}\lambda(X)|
=\phi^\circ(\lambda(X)).
$
Thus $\|X\|_\phi^\circ=\phi^\circ(\lambda(X))$, for all $X \in \Sn$.
\end{proof}

Given a symmetric gauge $\phi$ on $\R^n$, we denote by $\uball_\phi:=\{\bS\in\Sn:\ \norm{\bS}_\phi=\phi(\blambda(\bS))\le 1\}$ the unit ball of the associated matrix norm $\phi\circ\lambda$. This is a compact set~\cite[Cor.~5.4.8]{horn_johnson_2013}, therefore, the dual norm $\norm{\cdot}^\circ_{\phi}$ is well defined. 

\section{Spectral-gauge cuts}\label{sec:spectral-gauge}
This section studies the separation problem for cuts $\gcut_S$ whose separator $S$ lies in a compact representative slice of $\Snp$. We take this slice to be $\Snp\cap\uball_\phi$, where $\uball_\phi$ is the unit ball of the unitarily invariant matrix norm induced by a symmetric gauge $\phi$ on $\R^n$. We give a constructive formula for the most violated cut at a current iterate $X\in\Sn$ and a closed formula for the maximum violation involving the dual gauge $\phi^\circ$ and the negative eigenvalue vector $\lambda_-(X)$.

\begin{definition}[Spectral-gauge separation]\label{def:Bgauge}
Given a symmetric gauge $\phi$ on $\R^n$, the spectral-gauge separation problem at a point $\bbarX\in\Sn$ is the following optimization:
\begin{equation}
  \label{eq:gphi}
  g_\phi(\bbarX) := \min_{\bS\in \Snp\cap\uball_\phi} \ip{\bS}{\bbarX}
  \ \text{ with }\ \uball_\phi=\{\bS\in\Sn: \phi(\lambda(S))\le 1\}.
\end{equation}
If $\bstarS$ solves~\eqref{eq:gphi} and $g_\phi(\bbarX)<0$, then the linear inequality $\gcut_{\bstarS}:=\ip{\bstarS}{\bX}\ge 0$ is called a \emph{spectral-gauge cut} and $S^\star$ an optimal separator.
\end{definition}

Problem~\eqref{eq:gphi} minimizes a continuous function over a compact set; therefore, its minimum $g_\phi(\bbarX)\in\R$ is attained at some $\bstarS\in \Snp \cap\uball_\phi$. By Lemma~\ref{lem:uin}, this value is tightly related to the dual gauge $\phi^\circ$ evaluated at the spectrum of $\bbarX$.

Since every nonzero element $S\in\Snp$ has a representative $S'=S/\norm{S}_\phi$ in $\uball_\phi$, and since $\ip{S}{X}$ and $\ip{S'}{X}$ have the same sign for any $X\in\Sn$, self-duality of the PSD cone can be written as
\begin{equation}\label{eq:dualball}
    \Snp=\mathcal{P}_\phi:=\{\bX\in\Sn: \ip{\mathbf{S}}{\bX}\ge 0,\ \forall \mathbf{S}\in\Snp\cap\uball_\phi\}.
\end{equation}

\begin{remark} Optimizing~\eqref{eq:gphi} over any subset of $\Snp$ leads to a valid inequality $\gcut_{\bstarS}$ that separates $\bar{X}$ if the minimum is negative; optimizing over the representative unit ball ensures completeness: if $g_\phi(\bar{X})\ge 0$ then $\bar{X}\in\Snp$.
\end{remark}

\subsection{Explicit optimal separator}

The following theorem establishes that the problem~\eqref{eq:gphi} is
a complete separation oracle for the semi-infinite linear system~\eqref{eq:dualball}, and it provides a closed formula for $g_\phi$ and an explicit optimal $S^\star$.

\begin{theorem}[Optimal spectral-gauge separator]\label{thm:spectral-gauge}
Let $\phi$ be a symmetric gauge on $\R^n$ and let $\phi^\circ$ be its dual norm. For any $\X\in\Sn$ with eigendecomposition $\bX~=~\bQ\bLa(\bX)\bQ^\top$ and negative spectrum $\blambda_-(\bX)\in\R^n_+$,
the optimal value of the spectral-gauge separation problem~\eqref{eq:gphi} is
\begin{equation}\label{eq:op-seppr}
g_\phi(X)=-\phi^\circ(\blambda_-(X)),
\end{equation}
and there exists an optimal separator
$\bS^\star\in \argmin_{\bS\in\sball_\phi}\ip{\bS}{X}$
of the form
\begin{equation}\label{eq:Sstar-main}
\bS^\star=\bQ\,\diag(\bz^\star)\,\bQ^\top,
\text{ with }
\bz^\star\in\argmax_{\bz\in\R^n_+ }\{\blambda_-(\bX)^\top\bz:\ \phi(\bz)\le 1\}
\end{equation}
such that $\operatorname{supp}(z^\star)\subseteq \operatorname{supp}(\lambda_-(X))$ as in~\eqref{eq:dual-nonneg}. Then, either $g_\phi(X)=0$ and $X\in\Snp$, or $g_\phi(X)<0$ and $\gcut_{\bS^\star}:=\ip{\bstarS}{\bX}\ge 0$
 separates $X$ from $\Snp$.
\end{theorem}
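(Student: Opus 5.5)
The plan is to prove the two inequalities $g_\phi(X)\ge -\phi^\circ(\lambda_-(X))$ and $g_\phi(X)\le -\phi^\circ(\lambda_-(X))$ separately; the second one comes from evaluating the candidate $S^\star$ of~\eqref{eq:Sstar-main}. Throughout, write $X=X_+-X_-$ as in~\eqref{eq:spectral-parts}, with $X_\pm$ sharing the eigenbasis $Q$ of $X$.

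\emph{Lower bound.} Take any feasible $S\in\Snp\cap\uball_\phi$. Since $S\succeq 0$ and $X_+\succeq 0$, we have $\ip{S}{X_+}\ge 0$, and therefore $\ip{S}{X}\ge -\ip{S}{X_-}$. Next apply Von Neumann's inequality~\eqref{eq:trineq} to $S$ and $X_-$, both of which are PSD, so $|\lambda(\cdot)|^\uparrow=\lambda(\cdot)$. This gives $\ip{S}{X_-}\le \lambda(S)^\top \lambda(X_-)$. The vector $\lambda(X_-)$ is a rearrangement of $\lambda_-(X)$. Combining the Cauchy--Schwarz inequality~\eqref{eq:cs} with the permutation invariance of $\phi$ and $\phi^\circ$ (the dual of a symmetric gauge is again a symmetric gauge; this also follows directly from Definition~\ref{def:dualnorm}) yields $\ip{S}{X_-}\le \phi(\lambda(S))\,\phi^\circ(\lambda_-(X))\le \phi^\circ(\lambda_-(X))$. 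Alternatively, this whole step follows in one line from Lemma~\ref{lem:uin}, since $\ip{S}{X_-}\le \|X_-\|^\circ_\phi=\phi^\circ(\lambda(X_-))$. Either way, $g_\phi(X)\ge-\phi^\circ(\lambda_-(X))$.

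\emph{Upper bound and optimality of $S^\star$.} By~\eqref{eq:dual-nonneg}, the maximum in~\eqref{eq:Sstar-main} is attained, it equals $\phi^\circ(\lambda_-(X))$, and it has a maximizer $z^\star\ge 0$ with $\operatorname{supp}(z^\star)\subseteq\operatorname{supp}(\lambda_-(X))$. Compactness of the feasible set handles attainment. Set $S^\star=Q\diag(z^\star)Q^\top$. Then $S^\star\succeq0$ because $z^\star\ge 0$, and $\phi(\lambda(S^\star))=\phi(z^\star)\le1$ by the symmetry of $\phi$, so $S^\star$ is feasible. Because $S^\star$ and $X$ are simultaneously diagonalized by $Q$, we get $\ip{S^\star}{X}=z^{\star\top}\lambda(X)$. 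On $\operatorname{supp}(z^\star)$ we have $\lambda_i(X)=-(\lambda_-(X))_i$, so $z^{\star\top}\lambda(X)=-z^{\star\top}\lambda_-(X)=-\phi^\circ(\lambda_-(X))$. This step is where the support condition is essential: without it, positive eigenvalues would enter the sum. Together with the lower bound, this proves~\eqref{eq:op-seppr} and shows that $S^\star$ is optimal.

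\emph{Dichotomy.} Since $\phi^\circ$ is a norm, $g_\phi(X)=0$ if and only if $\lambda_-(X)=0$, which holds if and only if $X\in\Snp$. Otherwise $\ip{S^\star}{X}=g_\phi(X)<0$, while $\ip{S^\star}{Y}\ge0$ for every $Y\in\Snp$ by self-duality, so $\gcut_{S^\star}$ separates $X$ from $\Snp$.

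The main point needing care is the lower bound: the direct route uses the ordering and rearrangement in~\eqref{eq:trineq} together with the symmetry of $\phi^\circ$. The remaining steps are bookkeeping.
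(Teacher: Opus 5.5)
Your proposal is correct and follows the paper's proof essentially step by step. The lower bound uses $\ip{S}{X_+}\ge 0$, Von Neumann's trace inequality on $S$ and $X_-$, and the generalized Cauchy--Schwarz inequality with symmetry of $\phi^\circ$; the upper bound lifts the support-restricted maximizer $z^\star$ to $S^\star=Q\diag(z^\star)Q^\top$. The only difference is cosmetic and concerns the dichotomy: you read it off from the definiteness of the norm $\phi^\circ$ in the closed form, whereas the paper argues via $0\in\Snp\cap\uball_\phi$ and the reformulation~\eqref{eq:dualball}; both arguments are valid.
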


\begin{proof}
  We prove equality~\eqref{eq:op-seppr} by considering $X=\Xpos-\Xneg$ as defined in~\eqref{eq:spectral-parts} by the eigenbasis $Q$ and the positive and negative spectra $\lambda_+(X)$ and $\lambda_-(X)$.

  $(\ge)$: Let $S\in\sball_\phi$. Since $\Xpos\in\Snp$, we have $\ip{S}{\Xpos}\ge 0$ by self-duality, and
  \begin{equation}\label{eq:rel1}
  \ip{S}{X}\ge \ip{S}{-\Xneg}.
  \end{equation}
  Since $\Xneg\in\Snp$, the trace inequality~\eqref{eq:trineq} and the dual equation~\eqref{eq:dual-nonneg} together with $\phi(\lambda(S))\le 1$ yield
\begin{equation}\label{eq:rel2}
  \ip{S}{\Xneg}
  \le \lambda(\Xneg)^\top\lambda(S)
  =\bigl(\lambda_-(X)^\uparrow\bigr)^\top\lambda(S)
  \le \phi^\circ\bigl(\lambda_-(X)^\uparrow\bigr)
  =\phi^\circ(\lambda_-(X)).
\end{equation}
  Minimizing~\eqref{eq:rel1} and maximizing~\eqref{eq:rel2} over $S\in\sball_\phi$ yield $$g_\phi(X)\ge g_\phi(-\Xneg)\ge -\phi^\circ(\lambda_-(X)).$$

$(\le)$: Conversely, let $z^\star\in\R^n_+$ be a solution of the maximization problem defining the dual norm $\phi^\circ(\lambda_-(X))$ in~\eqref{eq:dual-nonneg} with support contained in the support of $\lambda_-(X)$, which means $z^\star$ is orthogonal to $\lambda_+(X)$. Define $S^\star=Q\,\diag(z^\star)\,Q^\top$ using the eigenbasis $Q$ of $X$, then $S^\star\in\sball_\phi$ and
\[
 g_\phi(X) \le
\ip{\bS^\star}{\bX}
=
\ip{\bS^\star}{\Xpos-\Xneg}
=
\blambda_+(\bX)^\top\bz^\star - \blambda_-(\bX)^\top\bz^\star
=
-\phi^\circ(\blambda_-(\bX)).
\]

Consequently, $S^\star$ and $z^\star$ satisfy the conditions in \eqref{eq:Sstar-main}.

The last assertion follows from reformulation~\eqref{eq:dualball}: if $g_\phi(X)\ge 0$ then $g_\phi(X)=0$  as $0\in\sball_{\phi}$, otherwise
$\gcut_{\bstarS}$ is by definition the valid inequality for $\Snp$ with the largest violation $|g_\phi(X)|$ at $X\in\Sn\setminus\Snp$ with respect to $\uball_\phi$.
\end{proof}

Theorem~\ref{thm:spectral-gauge} provides a constructive method for computing a solution $\bstarS$ of the gauge-spectral separation problem~\eqref{eq:gphi}, based on the computation of the dual norm of the negative spectrum of $X$ and an eigenbasis $Q$. Note that, when building $\bstarS$ in~\eqref{eq:Sstar-main}, we can restrict $Q$ to the eigenvectors corresponding to the support of $z^\star$ and thus to the negative eigenvalues of $X$.

\begin{remark}[Spectral-gauges from non-self-dual cones] Although Theorem~\ref{thm:spectral-gauge} is stated for the self-dual cone $\Snp$, the argument behind the separation problem is
more general, 
as it applies to any closed convex cone $\mathcal{K}\subseteq\Sn$, possibly non-self-dual. Indeed, given a symmetric gauge $\phi$ on $\R^n$, the unit ball $\mathcal{B}_\phi$ is compact in $\Sn$, so is the slice $\mathcal{K}\cap \mathcal{B}_\phi$, and $0\in \mathcal{K}\cap \mathcal{B}_\phi$. Moreover, every nonzero $S\in \mathcal{K}$ satisfies $S/\phi(\lambda(S))\in \mathcal{K}\cap \mathcal{B}_\phi$. Hence, the dual cone can be written
\[
\mathcal{K}^*=\{X\in\Sn:\langle S,X\rangle\ge 0, \ \forall S\in \mathcal{K}\cap \mathcal{B}_\phi\},
\]
and the problem
\[
g_{\phi,\mathcal{K}}(X):=\min_{S\in \mathcal{K}\cap \mathcal{B}_\phi}\langle S,X\rangle
\]
is a separation oracle for $\mathcal{K}^*$: one has $g_{\phi,\mathcal{K}}(X)=0$ if and only if $X\in \mathcal{K}^*$, while any minimizer $S^\star\in \mathcal{K}\cap \mathcal{B}_\phi$ with
$g_{\phi,\mathcal{K}}(X)<0$ yields a valid inequality
$\langle S^\star,\cdot\rangle\ge 0$ separating $X$ from $\mathcal{K}^*$.

\end{remark}

\subsection{Cut quality and normalization across gauges}
Cut selection involves several non-equivalent metrics to estimate the strength of an individual cut, or of a family of cuts; see, e.g., \cite{wesselmann_suhl_2012,dey_molinaro_2018}
and~\cite{baltean_lugojan_etal_2019} in the PSD-cut setting. Note first that all cuts $\mathcal{C}_S$ with normalized $S\in\Sn$ are \textit{minimal} for $\Snp$ in the sense that
$$\{X\in\Sn: \langle S',X\rangle\ge 0, \ \forall S'\in \mathcal{K}\}\subseteq \{X\in\Sn: \langle S,X\rangle\ge 0\} \implies S\in \mathcal{K},$$
where $\mathcal{K}$ is any finitely generated cone. 
The separation problem~\eqref{eq:gphi} selects a cut in $\{\mathcal{C}_S:S\in\Sn\}$ with maximum violation $\frac{\langle S,\bar{X}\rangle}{\norm{S}_\phi}$ relative to normalization $\norm{ \cdot }_\phi$ and current iterate $\bar{X}$. This metric measures the potential effectiveness of this individual cut within its family defined by gauge $\phi$, but not the relative strength of spectral-gauge cuts arising from different symmetric gauges.


The \emph{depth-of-cut} is a scale-invariant criterion used for such comparison~\cite{wesselmann_suhl_2012,dey_molinaro_2018}. It measures the 
Euclidean (or Frobenius, in the matrix space) distance between iterate $\bar X$ and the separating hyperplane $H_S := \{X\in\mathbb S^n : \langle S,X\rangle =0\}$, that is
\begin{equation}
\operatorname{depth}(\bar X,S)
    := \operatorname{dist}_F(\bar X,H_S)
    =
    \frac{|\langle S,\bar X\rangle|}{\|S\|_F}.
       \label{eq:cut-depth}
\end{equation}
The above measure can be easily derived considering that the Frobenius projection of $\bar X$ onto $H_S$ is
$
    \operatorname{proj}_{H_S}(\bar X)
    =
    \bar X
    -
    \frac{\langle S,\bar X\rangle}{\|S\|_F^2}S.
$
The depth has a simple interpretation in our spectral
setting. Let $\bar X=\bar X_+-\bar X_-$ be the decomposition as in \eqref{eq:spectral-parts}, then using Cauchy--Schwarz, we have for every
$S\in\Snp$
\[
    -\frac{\langle S,\bar X\rangle}{\|S\|_F}
    =
    \frac{\langle S,\bar X_-\rangle-\langle S,\bar X_+\rangle}{\|S\|_F}
    \le
    \frac{\langle S,\bar X_-\rangle}{\|S\|_F}
    \le
    \|\bar X_-\|_F.
\]
Equality is attained by any positive multiple of $\bar X_-$, and, in particular, by the optimal spectral-gauge separator for the symmetric gauge $\ell_2$ (see Corollary~\ref{cor:frob}).  

A third criterion is the complexity of the separator. The rank and the sparsity of the separator have a direct impact on the computational cost of the LP reoptimization that follows each oracle call. According to Theorem~\ref{thm:spectral-gauge}, one may choose an optimal separator $S^\star$ whose rank is at most the number of negative eigenvalues of the current iterate $\bar X$, denoted $p$. Hence, after possibly relabeling the negative eigenvectors, $S^\star$ admits the spectral decomposition
$S^\star=\sum_{i\in J} \alpha_i q_i q_i^\top, \:\alpha_i>0,
$
for some $J\subseteq [p]$, where the vectors $q_i$ are eigenvectors associated with negative eigenvalues of $\bar X$.

Geometrically, the separator reflects the spectral structure of the iterate: the negative eigenspace identifies directions along which the quadratic form $q_i^\top \bar X q_i$ is negative, and each eigenvector $q_i$ associated with a negative eigenvalue corresponds to a principal direction of curvature violating the semidefinite constraint. The rank-one matrices $q_i q_i^\top$ therefore define elementary separating hyperplanes in the space of symmetric matrices, each enforcing the inequality $q_i^\top X q_i \ge 0$. When all $p$ directions are used, the $p$-rank separator aggregates all such violated spectral directions, producing a cut whose normal vector spans the entire negative eigenspace. Hence, such a full-rank cut reshapes the feasible set more drastically than a rank-one cut and could move the next optimum to a distant region, therefore increasing the cost of reoptimization. Selecting only a subset of eigenvectors yields a separator whose normal lies in a lower-dimensional subspace of this eigenspace.

A distinct approach to reducing computational cost is to sparsify the separator. Note that rank and entrywise sparsity are not related. Indeed,
rank-one separators are typically dense because the eigenvectors $q_i$ themselves are dense, so the resulting cut $\mathcal{C}_{S^\star}$ is not significantly sparser.
This distinction motivates the sparse and low-rank optimal separators developed in Sections~\ref{subsec:sparse_support_gauges} and~\ref{subsec:sparsified_gauges}, respectively.

\section{Instantiation}\label{sec:instantiation}
This section illustrates the canonical framework. Section~\ref{subsec:lp} applies Theorem~\ref{thm:spectral-gauge} to the $\ell_p$ norms, which recover standard eigencuts for $p=1$, nuclear cuts for $p=\infty$, and infinitely many intermediate cuts. Section~\ref{subsec:sparse_support_gauges} connects our framework to the 
factor-width
hierarchy~\cite{permenter_parrilo_2018} and to sparse eigencuts~\cite{dey_etal_2022}. Section~\ref{subsec:sparsified_gauges} characterizes optimal separators under a prescribed rank limit.

\subsection{Spectral-$\ell_p$ cut hierarchy}\label{subsec:lp}
The $\ell_p$ norms form a well-known family of symmetric gauges on $\R^n$ that are closed under duality in the following sense:
if $\phi$ is the $\ell_p$ norm for some $p\in[1,+\infty]$, i.e., $\phi(\bz)=\|\bz\|_p=(\sum |z_i|^p)^{1/p}$ or $\phi(\bz)=\|\bz\|_\infty=\max |z_i|$, then the dual norm $\phi^\circ$ is the $\ell_q$ norm where $1/p+1/q=1$ with the convention $1/\infty=0$ (see, e.g.,~\cite{horn_johnson_2013}). The pair $(p,q)$ is called a pair of \emph{dual exponents}.
Theorem~\ref{thm:spectral-gauge} applies to the $\ell_p$ norms as follows.

\begin{proposition}[Closed-forms for $\ell_p$ gauges]\label{prop:lp}
Let $\phi=\norm{\cdot}_p$ be the $\ell_p$-norm for some dual exponents $p,q\in[1,\infty]$. Then, for any $\X\in\Sn$ with eigendecomposition $\bX =\bQ\bLa(\bX)\bQ^\top$ and  spectrum $\blambda(\bX)=(\lambda_i)_i$, the optimum value of the spectral-gauge separation problem~\eqref{eq:gphi} is
\[
g_\phi(\bX) = -\|\blambda_-(\bX)\|_q,
\]
with optimal solution
$\bS^\star=\bQ\diag(\bz^\star)\bQ^\top$
 for $\bz^\star\in\argmax_{z\in\R^n_+}\{\blambda_-(\bX)^\top \bz:\|\bz\|_p\le 1\}.$

In particular, if $1<p<\infty$ and $\bX\in\Sn\setminus\Snp$, then a maximizer is defined componentwise by
$$z^\star_i = \left(\frac{\lambda_-(X)_i}{\norm{\lambda_-(X)}_q}\right)^{q-1}\quad\forall i\in [n].$$

\end{proposition}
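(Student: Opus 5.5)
This is a direct specialization of Theorem~\ref{thm:spectral-gauge}, so the plan is to check its hypotheses and then compute the dual norm. First, $\norm{\cdot}_p$ is a symmetric gauge for every $p\in[1,\infty]$, since it is invariant under sign changes and coordinate permutations. Its dual gauge is $\norm{\cdot}_q$ with $1/p+1/q=1$, as recalled just before the statement. Substituting $\phi^\circ=\norm{\cdot}_q$ into~\eqref{eq:op-seppr} gives $g_\phi(X)=-\|\lambda_-(X)\|_q$. The form of the optimal separator $S^\star=Q\diag(z^\star)Q^\top$, with $z^\star$ maximizing $\lambda_-(X)^\top z$ over $z\in\R^n_+$ with $\|z\|_p\le 1$, is then exactly~\eqref{eq:Sstar-main}.

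One point needs care. Theorem~\ref{thm:spectral-gauge} builds $S^\star$ from a maximizer whose support lies in $\operatorname{supp}(\lambda_-(X))$, whereas the proposition allows any maximizer over $\R^n_+$. The $(\le)$ part of that proof uses this support condition only to get $z^{\star\top}\lambda_+(X)=0$. For the explicit formula this costs nothing, because $z^\star_i=0$ whenever $\lambda_-(X)_i=0$ (as $q-1>0$), so the support condition holds automatically. For general $p$ I would phrase the statement with the supported maximizer, which always exists by~\eqref{eq:dual-nonneg}.

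For $1<p<\infty$ and $X\notin\Snp$, set $y:=\lambda_-(X)\neq 0$ and $z^\star_i=(y_i/\|y\|_q)^{q-1}$. I would verify two things.

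\emph{Feasibility.} Since $(q-1)p=q$,
\[
\|z^\star\|_p^p=\sum_i y_i^{q}/\|y\|_q^{q}=1 .
\]

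\emph{Attainment.} We have
\[
y^\top z^\star=\sum_i y_i^{q}/\|y\|_q^{q-1}=\|y\|_q ,
\]
which equals $\phi^\circ(y)$. By the Hölder inequality~\eqref{eq:cs}, $y^\top z\le\|y\|_q\|z\|_p\le\|y\|_q$ for every feasible $z$, so $z^\star$ is optimal.

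The endpoint cases $p\in\{1,\infty\}$ need no explicit formula in the statement. They are still covered by the general claim, since the maximizer sets are nonempty by compactness.

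The main obstacle is only bookkeeping: justifying the $\ell_p$–$\ell_q$ duality at the endpoints, and making sure the chosen maximizer is orthogonal to $\lambda_+(X)$ so that Theorem~\ref{thm:spectral-gauge} applies verbatim.
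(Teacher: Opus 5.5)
Your proposal is correct and follows the paper's route: you apply Theorem~\ref{thm:spectral-gauge} with $\phi^\circ=\|\cdot\|_q$, then handle the explicit maximizer for $1<p<\infty$ via H\"older. The only difference there is that you verify feasibility and attainment of $z^\star$ directly, whereas the paper reads $z^\star$ off the equality case $(z^\star_i)^p=\alpha\,y_i^q$ of H\"older's inequality.

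Your remark on the support condition is also sound, and the paper's proof passes over it: read as ``any maximizer over $\R^n_+$,'' the claim can fail. For example, for $p=\infty$ one may set $z_i=1$ at indices of positive eigenvalues, which gives a non-optimal separator. So the supported maximizer of~\eqref{eq:Sstar-main} is the right reading.
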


\begin{proof}

The first part is Theorem~\ref{thm:spectral-gauge} applied to $\phi=\|\cdot\|_p$ and $\phi^\circ=\|\cdot\|_q$.
It remains to compute the maximizer $\bz^\star$ when $1<p<\infty$ and $\bX\not\in\Snp$, i.e., $\blambda_-(\bX)\neq\mathbf{0}$.

Let $y=\lambda_-(X)\in\R_+^n$. The Cauchy-Schwarz inequality~\eqref{eq:cs} reads $y^\top\bz^\star \le \|y\|_q\|\bz^\star\|_p$
with equality if and only if there exists $\alpha>0$ such that
$(z^\star_i)^p=\alpha\,(y_i)^q$ for all $i\in [n]$~\cite[p.~559]{horn_johnson_2013}. Maximum occurs for $\|\bz^\star\|_p=1$, which yields
\[
1=\|\bz^\star\|_p^p
 = \sum_{i=1}^n (z_i^\star)^p
 = \alpha \sum_{i=1}^n y_i^q
 = \alpha \norm{y}_q^q.
\]

Hence, $z_i^\star=\alpha^{1/p}(y_i)^{q/p}
= \norm{y}_q^{-q/p}\,(y_i)^{q/p}$, with $q/p=q-1$.
\end{proof}

We now give closed forms for the separation problem corresponding to the Manhattan norm $\ell_1$, the Euclidean $\ell_2$ norm, and the maximum norm $\ell_\infty$, recognizing the standard eigenvalue cuts in the former case.

\begin{corollary}[Eigencuts ($p=1$)]\label{cor:rank1}
$\Snp\cap\uball_{\ell_1}=\{\bS\in\Snp:\tr(\bS)\le 1\}$ and
\[
g_{\ell_1}(\bX)=-\|\blambda_-(\bX)\|_\infty=\min\{\lambda_1(\bX),0\}.
\]
Moreover, if $\lambda_1(\bX)<0$, an optimal separator may be chosen as
$\bS^\star=\bq_1\bq_1^\top$, where $\bq_1$ is a unit eigenvector associated with the smallest eigenvalue $\lambda_1(\bX)$.
\end{corollary}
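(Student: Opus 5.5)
The plan is to specialize Proposition~\ref{prop:lp} (equivalently Theorem~\ref{thm:spectral-gauge}) to $p=1$, $q=\infty$, and to read off each of the three claims directly.

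\textbf{Step 1: the slice.} For $S\in\Snp$ all eigenvalues are nonnegative, so $\|\lambda(S)\|_1=\sum_i \lambda_i(S)=\tr(S)$. The condition $\phi(\lambda(S))\le 1$ is therefore exactly $\tr(S)\le 1$, which gives $\Snp\cap\uball_{\ell_1}=\{S\in\Snp:\tr(S)\le 1\}$.

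\textbf{Step 2: the value.} Proposition~\ref{prop:lp} gives $g_{\ell_1}(X)=-\|\lambda_-(X)\|_\infty$. Since $\lambda_-(X)_i=(-\lambda_i(X))_+$, the largest entry of $\lambda_-(X)$ is $\max_i(-\lambda_i(X))_+=(-\lambda_1(X))_+$, because $\lambda_1$ is the smallest eigenvalue. Hence $-\|\lambda_-(X)\|_\infty=-(-\lambda_1(X))_+=\min\{\lambda_1(X),0\}$.

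\textbf{Step 3: the separator.} Assume $\lambda_1(X)<0$. We need a maximizer of $\lambda_-(X)^\top z$ over $z\in\R^n_+$ with $\|z\|_1\le 1$, i.e.\ a linear functional with nonnegative coefficients maximized over the simplex-type set $\{z\ge 0,\ \sum z_i\le 1\}$. The maximum is attained at a vertex placing all mass on a largest coefficient, so we take $z^\star=e_1$. Its value is $\lambda_-(X)_1=-\lambda_1(X)=\|\lambda_-(X)\|_\infty$, and its support lies in $\operatorname{supp}(\lambda_-(X))$ since $\lambda_1<0$. Theorem~\ref{thm:spectral-gauge} then gives $S^\star=Q\diag(e_1)Q^\top=q_1q_1^\top$, where $q_1$ is the first column of $Q$, a unit eigenvector for $\lambda_1(X)$. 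As a direct check, $\tr(q_1q_1^\top)=1$, the matrix is PSD, and $\langle q_1q_1^\top,X\rangle=\lambda_1(X)=g_{\ell_1}(X)$.

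The only place needing care is the sign bookkeeping in Step~2, namely identifying $\|\lambda_-(X)\|_\infty$ with $(-\lambda_1)_+$ using the nondecreasing ordering of eigenvalues. Everything else is routine.
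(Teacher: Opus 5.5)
Your proposal is correct and follows essentially the same route as the paper. Both use the identity $\|\lambda(S)\|_1=\tr(S)$ on $\Snp$ for the slice, Proposition~\ref{prop:lp} with $(p,q)=(1,\infty)$ for the value, and a direct check that $q_1q_1^\top$ is feasible and attains $\lambda_1(X)$; you additionally make explicit the steps $\|\lambda_-(X)\|_\infty=(-\lambda_1(X))_+$ and $z^\star=e_1$, which the paper leaves implicit.
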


\begin{proof}
For $\bS\in\Snp$, $\|\blambda(\bS)\|_1=\tr(\bS)$. Apply Proposition~\ref{prop:lp} with $p=1$ and dual exponent $q=\infty$.
If $\lambda_1(\bX)<0$, then
$\bS^\star=\bq_1\bq_1^\top\in\Snp$ with $\tr(\bS^\star)=1$, and
$
\ip{\bS^\star}{\bX}
=\lambda_1(\bX)=g_{\ell_1}(\bX),
$
so $\bS^\star$ is optimal.
\end{proof}

\begin{corollary}[Frobenius cuts ($p=2$)]\label{cor:frob}
$\uball_{\ell_2}=\{\bS\in\Sn:\normF{\bS}\le 1\}$ and
$g_{\ell_2}(\bX)=-\|\Xneg\|_F$. If $\Xneg\neq\mathbf{0}$, an optimal separator is given by
\[
\bS^\star = \frac{\Xneg}{\|\Xneg\|_F}.
\]
\end{corollary}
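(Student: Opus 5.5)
The plan is to specialize Proposition~\ref{prop:lp} to $p=q=2$ and then translate the vector statements back into matrix language via the eigendecomposition $X=Q\Lambda(X)Q^\top$.

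\textbf{Step 1: identify the unit ball.} For $S\in\Sn$ with $S=U\Lambda(S)U^\top$, we have
\[
\|\lambda(S)\|_2^2=\sum_i\lambda_i(S)^2=\tr(S^2)=\normF{S}^2.
\]
Hence $\uball_{\ell_2}$ is exactly the Frobenius unit ball.

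\textbf{Step 2: the optimal value.} Proposition~\ref{prop:lp} with $q=2$ gives $g_{\ell_2}(X)=-\|\lambda_-(X)\|_2$. The computation in Step~1 applies verbatim to $\Xneg=Q\Lambda_-(X)Q^\top$, whose spectrum is the rearrangement of $\lambda_-(X)$. So $\|\lambda_-(X)\|_2=\normF{\Xneg}$, and therefore $g_{\ell_2}(X)=-\normF{\Xneg}$.

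\textbf{Step 3: the optimal separator.} Assume $\Xneg\neq 0$, so that $X\notin\Snp$. The closed form in Proposition~\ref{prop:lp} with $q-1=1$ gives
\[
z^\star_i=\frac{\lambda_-(X)_i}{\|\lambda_-(X)\|_2}.
\]
Then
\[
S^\star=Q\diag(z^\star)Q^\top=\frac{Q\Lambda_-(X)Q^\top}{\|\lambda_-(X)\|_2}=\frac{\Xneg}{\normF{\Xneg}}.
\]

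\textbf{Check.} As a sanity check, independent of the proposition: $S^\star\in\Snp$ and $\normF{S^\star}=1$. Using $\Xpos\Xneg=0$,
\[
\ip{S^\star}{X}=\frac{\ip{\Xneg}{\Xpos}-\normF{\Xneg}^2}{\normF{\Xneg}}=-\normF{\Xneg},
\]
since $\ip{\Xneg}{\Xpos}=\tr(\Xneg\Xpos)=0$.

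The only mildly delicate point is that $\Xneg$ is defined through the chosen eigenbasis $Q$. Any valid choice of $Q$ yields the same matrix, but the argument above needs no such uniqueness: it uses the same $Q$ throughout. No step is a real obstacle.
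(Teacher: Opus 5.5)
Your proposal is correct and follows the paper's own proof, which simply specializes Proposition~\ref{prop:lp} to $p=q=2$ and uses $\|\Xneg\|_F=\|\lambda_-(X)\|_2$. Your explicit identification of the Frobenius ball, the closed-form $z^\star$ with $q-1=1$, and the direct check $\langle S^\star, X\rangle=-\|\Xneg\|_F$ via $\Xpos\Xneg=0$ just spell out details the paper leaves implicit.
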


\begin{proof}
Apply Proposition~\ref{prop:lp} with $p=q=2$ and $\|\Xneg\|_F=\|\blambda_-{(\bX)}\|_2$.
\end{proof}


\begin{corollary}[Nuclear cuts ($p=\infty$)]\label{cor:projector} $\uball_{\ell_\infty}=\{\bS\in\Sn:\normtwo{\bS}\le 1\}$ and
\[
g_{\ell_\infty}(\bX)=-\|\blambda_-(\bX)\|_1=\sum_{\lambda_i(\bX)<0}\lambda_i(\bX).
\]
An optimal separator may be chosen as the (unique) orthogonal projector 
onto the negative eigenspace of $\bX$,
i.e., $\bS^\star=
Q\,\operatorname{diag}\bigl(\mathbf 1_{\{\lambda_i(X)<0\}}\bigr)\,Q^\top$.
\end{corollary}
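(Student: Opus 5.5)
Three things need checking: the description of the unit ball $\uball_{\ell_\infty}$, the closed-form value of $g_{\ell_\infty}$, and the optimality of the projector. Each is a direct specialization of Proposition~\ref{prop:lp} with $p=\infty$ and dual exponent $q=1$.

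\emph{Unit ball.} For $\bS\in\Sn$, $\|\blambda(\bS)\|_\infty=\max_i|\lambda_i(\bS)|$. Since $\bS$ is normal, this maximum equals the largest singular value, which is $\normtwo{\bS}$. Hence $\uball_{\ell_\infty}=\{\bS\in\Sn:\normtwo{\bS}\le 1\}$.

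\emph{Value.} Proposition~\ref{prop:lp} with $q=1$ gives $g_{\ell_\infty}(\bX)=-\|\blambda_-(\bX)\|_1$. The entries of $\blambda_-(\bX)$ are $(-\lambda_i(\bX))_+$, so
\[
-\|\blambda_-(\bX)\|_1=-\sum_i(-\lambda_i(\bX))_+=\sum_{\lambda_i(\bX)<0}\lambda_i(\bX).
\]

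\emph{Optimal separator.} I would solve the inner problem $\max\{\blambda_-(\bX)^\top\bz:\bz\in\R^n_+,\ \|\bz\|_\infty\le 1\}$ directly. It is separable over coordinates: each term $\lambda_-(\bX)_i z_i$ with $z_i\in[0,1]$ is maximized by $z_i=1$ when $\lambda_-(\bX)_i>0$. Taking $z_i=0$ on the remaining coordinates is also optimal and respects the support condition of Theorem~\ref{thm:spectral-gauge}. So $\bz^\star=\mathbf 1_{\{\lambda_i(\bX)<0\}}$, which gives objective value $\|\blambda_-(\bX)\|_1$. Theorem~\ref{thm:spectral-gauge} then yields the optimal separator $\bS^\star=\bQ\,\diag(\bz^\star)\,\bQ^\top$. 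This matrix is symmetric and idempotent, with range equal to the span of the eigenvectors in $\bQ$ attached to negative eigenvalues. It is therefore the orthogonal projector onto the negative eigenspace of $\bX$. As a sanity check, $\ip{\bS^\star}{\bX}=\sum_{\lambda_i<0}\lambda_i$.

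\emph{Uniqueness.} The one step needing care is the parenthetical ``(unique)'': it should mean that the projector does not depend on the chosen eigenbasis $\bQ$, not that the minimizer of~\eqref{eq:gphi} is unique. The minimizer is in general not unique. For example, if $\bX$ has a zero eigenvalue, the corresponding coordinate $z_i$ may be any value in $[0,1]$ without changing the objective. Basis-independence holds because the negative eigenspace is the direct sum of the eigenspaces $\ker(\bX-\lambda\bI)$ over $\lambda<0$, and this subspace is intrinsic to $\bX$. Repeated eigenvalues change the possible choices of $\bQ$ but not this subspace, and the orthogonal projector onto a fixed subspace is unique. This is the only point beyond routine substitution, and I would state it explicitly.
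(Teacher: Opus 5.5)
Your proposal is correct and matches the paper's proof: you specialize Proposition~\ref{prop:lp} to $p=\infty$, $q=1$, and solve the separable vector problem by setting $z^\star_i=1$ exactly when $\lambda_-(X)_i>0$ and $z^\star_i=0$ otherwise. Your additional points are correct details that the paper leaves implicit: the unit ball is the spectral-norm ball, $S^\star$ is idempotent and hence the projector, and ``(unique)'' refers to the projector being basis-independent rather than to uniqueness of the minimizer, which indeed fails when $X$ is singular.
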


\begin{proof}
Apply Proposition~\ref{prop:lp} with $p=\infty$ and $q=1$. The vector maximization reads
$z^\star\in\argmax\{\blambda_-(X)^\top\bz:\ \bz\ge\mathbf{0},\ \|\bz\|_\infty\le 1\}$ and holds for $z^\star_i=1$ if $\lambda_-(X)_i>0$ and $z^\star_i=0$ otherwise.
\end{proof}
The latter case $p=
\infty$ coincides with the definition of nuclear cut in~\cite{bertsimas_cory_wright_2020}:
\[
\langle W,X\rangle \le \operatorname{tr}(X),
\quad W \in \Snp,\ \|W\|_2 \le 1.
\]
Let $S := {I-W}$. Since $W \in \Snp$ and $\|W\|_2 \le 1$, its eigenvalues lie in $[0,1]$. Hence,
\begin{equation*}
\max_{\substack{W\in \Snp\\ \|W\|_2\le 1}}
\bigl(\langle W,X\rangle-\operatorname{tr}(X)\bigr) =
\max_{\substack{W\in \Snp\\ \|W\|_2\le 1}}
\langle W-I,X\rangle =
-\min_{\substack{S\succeq 0\\ \|S\|_2\le 1}}
\langle S,X\rangle = -g_{\ell_\infty}(X).
\end{equation*}


\begin{remark}[Relative strengths]
The spectral-$\ell_p$ cuts can be compared in terms of the amount of violation $-g_{\ell_p}$ at a given iterate $X$. For $1\le p\le q\le \infty$, $\norm{\bz}_{q}\le \norm{\bz}_{p}$ holds $\forall\bz\in\RR^n$. Hence, for any $\bS\in \uball_{\ell_p}$, one has $\norm{\blambda(\bS)}_{q}\le \norm{\blambda(\bS)}_{p}\le 1$, proving $\uball_{\ell_p}\subseteq \uball_{\ell_q}$ and $g_{\ell_q}(\bX)\le g_{\ell_p}(\bX)$.
Alternatively, under the Euclidean depth measure~\eqref{eq:cut-depth},
Frobenius cuts are the deepest. Hence, spectral-$\ell_\infty$ cuts may yield larger violation, but spectral-$\ell_2$ cuts maximize depth.
\end{remark} 

\subsection{Spectral-gauge cuts with sparse support}
\label{subsec:sparse_support_gauges}
The high density of the eigencuts, i.e., the $\ell_1$-spectral gauge cuts, is an issue addressed in~\cite{qualizza_belotti_margot_2012,baltean_lugojan_etal_2019,dey_etal_2022} by enforcing the separating eigenvector ($q_1$ in Corollary~\ref{cor:rank1}) to have small
support.
We extend this idea 
to spectral-gauge cuts for arbitrary symmetric gauge $\phi$. 
A direct analogue is to require the separator $S$ to be supported on a block $I \times I$, so that the inequality $\langle S,X\rangle \geq 0$
only involves the principal submatrix $X_I$. This restriction is closely related
to the factor-width matrix cone hierarchy~
\cite{boman_etal_2005,permenter_parrilo_2018} 
the conic hull of PSD principal blocks of size at most $m$ is the factor-width-$m$
cone, and its dual is the cone of matrices whose principal submatrices of order at
most $m$ are all PSD.

We treat this additional sparsity requirement in two steps. First, we fix a support
$I \subseteq [n]$ and show that the sparse spectral-gauge oracle reduces 
to
Theorem~\ref{thm:spectral-gauge} applied to the principal submatrix $X_I$. Second, we fix
a maximum support size $m$ and optimize over all supports $I$ with $|I|\le m$. This
separates the spectral part of the oracle, which remains explicit, from the outer
combinatorial search over supports. In the special case $\phi=\ell_1$, this
recovers the sparse eigencut separation problem of~\cite{dey_etal_2022}.

\paragraph{Fixed support $I$}
Fix $I\subseteq[n]$ and let
$\mathbf{E}_I\in\{0,1\}^{n\times |I|}$ be the principal block selector, i.e., for any $\mathbf{X}\in\mathbb{S}^n$, the product
$\mathbf{E}_I^\top \mathbf{X}\mathbf{E}_I$ is the principal submatrix of
$\mathbf{X}$ supported on $I$, hence an element of $\mathbb{S}^{|I|}$. We denote
\[
\mathbf{X}_I
:=
\mathbf{E}_I^\top \mathbf{X}\mathbf{E}_I
\in\mathbb{S}^{|I|},
\quad
\iota_I(\mathbf{T})
:=
\mathbf{E}_I\mathbf{T}\mathbf{E}_I^\top
\in\mathbb{S}^n
\]
the principal-block projection and the corresponding lifting map, respectively. A matrix
$\mathbf{S}\in\mathbb{S}^n$ is supported on the principal block $\operatorname{supp}(\mathbf{S})\subseteq I$ if $\mathbf{S}=\iota_I(\mathbf{S}_I)$ for some
$\mathbf{S}_I\in\mathbb{S}^{|I|}$. 
 The block PSD cone associated with $I$ and its dual cone in $\Sn$ are
\[
\iota_I(\mathbb{S}_+^{|I|})
=
\Big\{
\mathbf{E}_I\mathbf{S}_I\mathbf{E}_I^\top:
\mathbf{S}_I\in\mathbb{S}_+^{|I|}
\Big\}
\subseteq \Snp,\quad
\iota_I(\mathbb{S}_+^{|I|})^*=
\{\mathbf{X}\in\mathbb{S}^n:\ \mathbf{X}_I\in\mathbb{S}^{|I|}_+\}.
\]
since $\big\langle
\mathbf{E}_I\mathbf{S}_I\mathbf{E}_I^\top,\mathbf{X}\big\rangle
=
\big\langle
\mathbf{S}_I,\mathbf{X}_I
\big\rangle$.

Given a symmetric gauge $\phi$ on $\mathbb{R}^n$, the support $I$ induces a symmetric gauge
$\phi_I$ on $\mathbb{R}^{|I|}$ by zero-padding
$\phi_I(\mathbf{u})
:=
\phi((\mathbf{u},0_{[n]\setminus I}))$ for $
\mathbf{u}\in\mathbb{R}^{|I|}$,
where the positions of the zeros are not relevant due to the permutation invariance of
$\phi$. Equivalently, its dual gauge
$\phi_I^\circ$ is the restriction of $\phi^\circ$ to vectors supported on $I$. 
Enforcing a fixed-support condition restricts the spectral-gauge separation problem to
\begin{equation}\label{eq:full-space}
g_{I,\phi}(\bar{\mathbf{X}})
:=
\min
\Big\{
\langle \mathbf{S},\bar{\mathbf{X}}\rangle:
\mathbf{S}\in\Snp\cap\uball_\phi,\ 
\operatorname{supp}(\mathbf{S})\subseteq I
\Big\}.
\end{equation}

Since $\lambda(\iota_I(\mathbf{S}_I))
=
\big(\lambda(\mathbf{S}_I),0_{[n]\setminus I}\big)$, the feasible set is the lifting of the lower-dimensional ball:
\[
\Snp\cap\uball_\phi\cap
\{\mathbf{S}:\operatorname{supp}(\mathbf{S})\subseteq I\}
=
\Big\{
\mathbf{E}_I\mathbf{S}_I\mathbf{E}_I^\top:
\mathbf{S}_I\in\mathbb{S}_+^{|I|},\
\phi_I\big(\boldsymbol{\lambda}(\mathbf{S}_I)\big)\le 1
\Big\} =:\uball_{I,\phi}
\]
Consequently, the restricted problem~\eqref{eq:full-space} projects onto
the subspace $\mathbb{S}^{|I|}$,
resulting in the spectral-gauge oracle of Theorem~\ref{thm:spectral-gauge} applied to the
principal block $\bar{X}_I\in\mathbb{S}^{|I|}$ of the current iterate $\bar{X}$. These results are formalized in the following proposition.

\begin{proposition}[Fixed-support spectral-gauge oracle]
\label{prop:fixed_support_spectral_gauge}
Given $\bar{X}\in\mathbb{S}^n$,
\[
g_{I,\phi}(\bar{X})
=
\min_{\substack{\mathbf{S}\in\mathbb{S}^{|I|}_+\\
\phi_I (\boldsymbol{\lambda}(\mathbf{S}))\le 1}}
\langle \mathbf{S},\bar{X}_I\rangle
=
-\phi_I ^\circ\!\big(\boldsymbol{\lambda}_-(\bar{X}_I)\big).
\]
Thus, $g_{I,\phi}(\bar{X})=0
\iff
\bar{X}\in \iota_I(\mathbb{S}_+^{|I|})^*$. If $g_{I,\phi}(\bar{X})<0$ and $\bar{X}_I
=
\mathbf{Q}\,\operatorname{diag}\!\big(\boldsymbol{\lambda}(\bar{X}_I)\big)\,\mathbf{Q}^\top$,
\[
z^\star\in
\argmax\left\{
\lambda_-(\bar X_I)^\top z:
z\in\mathbb R^{|I|}_+,\ 
\phi_I(z)\le 1,\ 
\operatorname{supp}(z)\subseteq
\operatorname{supp}(\lambda_-(\bar X_I))
\right\},
\]
then an optimal separator for $g_{I,\phi}(\bar{X})$ is
\[
\mathbf{S}_I^\star
=
\mathbf{E}_I\,
\mathbf{Q}\,\operatorname{diag}(\mathbf{z}^\star)\,\mathbf{Q}^\top
\mathbf{E}_I^\top
\in \mathcal{B}_{I,\phi}\subseteq \iota_I(\mathbb{S}_+^{|I|}).
\]
Moreover, $\langle \mathbf{S}_I^\star,\mathbf{X}\rangle\ge 0$ for all
$\mathbf{X}\in \iota_I(\mathbb{S}_+^{|I|})^*$, and in particular for every $\mathbf{X}\in\Snp$.
\end{proposition}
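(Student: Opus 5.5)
The plan is to reduce the fixed-support problem~\eqref{eq:full-space} to the full-space separation problem of Theorem~\ref{thm:spectral-gauge} posed in $\mathbb{S}^{|I|}$ with the gauge $\phi_I$.

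The first step is to check that $\phi_I$ is a symmetric gauge on $\R^{|I|}$. It is a norm because zero-padding is an injective linear map and $\phi$ is a norm. It is absolute and permutation invariant because sign changes and permutations of the first $|I|$ coordinates extend to sign changes and permutations of $\R^n$ fixing the zero block. Its dual is the restriction of $\phi^\circ$ to vectors supported on $I$: for $y$ supported on $I$, the maximizer in~\eqref{eq:dual-nonneg} may be taken with support inside $\operatorname{supp}(y)\subseteq I$.

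The second step is to identify the feasible sets. Using the lifting identity $\lambda(\iota_I(S_I))=(\lambda(S_I),0)$ up to reordering, together with the symmetry of $\phi$, the map $\iota_I$ is a bijection between $\{S_I\in\mathbb{S}^{|I|}_+:\phi_I(\lambda(S_I))\le 1\}$ and the feasible set of~\eqref{eq:full-space}, namely $\uball_{I,\phi}$. The inverse is $S\mapsto S_I$: if $S\succeq0$ has support in $I$, then $S=\iota_I(S_I)$ and $S_I\succeq0$. The objective transfers through $\ip{E_IS_IE_I^\top}{\bar X}=\ip{S_I}{\bar X_I}$, which gives the first equality. Applying Theorem~\ref{thm:spectral-gauge} in dimension $|I|$ to $\bar X_I=Q\,\diag(\lambda(\bar X_I))Q^\top$ then gives the value $-\phi_I^\circ(\lambda_-(\bar X_I))$. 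The theorem also provides an optimal $Q\,\diag(z^\star)Q^\top$ with $z^\star$ as stated, and lifting it by $\iota_I$ yields $S_I^\star\in\uball_{I,\phi}$, attaining the value in~\eqref{eq:full-space}.

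The third step handles the characterization and validity. Because $\phi_I^\circ$ is a norm, $g_{I,\phi}(\bar X)=0$ holds if and only if $\lambda_-(\bar X_I)=0$, that is, $\bar X_I\succeq 0$. By the displayed description of the dual cone, this is exactly $\bar X\in\iota_I(\mathbb{S}^{|I|}_+)^*$. Validity is immediate: since $S_I^\star\in\iota_I(\mathbb{S}^{|I|}_+)$, we have $\ip{S_I^\star}{X}\ge0$ on the dual cone by definition. Finally, $\Snp\subseteq\iota_I(\mathbb{S}^{|I|}_+)^*$ because principal submatrices of PSD matrices are PSD, or equivalently because $\iota_I(\mathbb{S}^{|I|}_+)\subseteq\Snp$ and $\Snp$ is self-dual.

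The main obstacle is only bookkeeping. One point needs care: the eigenvalue ordering of the lifted matrix mixes the padded zeros into the spectrum. This is handled by the permutation invariance of $\phi$, and I expect no substantive difficulty beyond it.
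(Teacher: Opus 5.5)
Your proposal is correct and follows the paper's own argument. The paper also uses $\lambda(\iota_I(S_I))=(\lambda(S_I),0)$ to identify the feasible set of \eqref{eq:full-space} with the lifting $\uball_{I,\phi}$ of the $\phi_I$-ball in $\mathbb{S}^{|I|}_+$, transfers the objective through $\ip{E_IS_IE_I^\top}{\bar X}=\ip{S_I}{\bar X_I}$, and applies Theorem~\ref{thm:spectral-gauge} to the block $\bar X_I$; your extra checks (that $\phi_I$ is a symmetric gauge, that $g_{I,\phi}(\bar X)=0$ forces $\lambda_-(\bar X_I)=0$, and validity via the dual-cone description) simply make explicit what the paper leaves implicit.
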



\paragraph{Variable support of bounded size $m$}
We now fix a maximum support size and allow the support to vary. Following the factor-width terminology of
Boman et al.~\cite{boman_etal_2005} and the cone
formulation used by Permenter and Parrilo~\cite{permenter_parrilo_2018},
we define for $1\le m\le n$ the factor-width-$m$ cone by
\[
\mathcal{FW}_m^n
:=
\Big\{
\sum_{\substack{I\subseteq[n]\\ |I|\le m}}
\iota_I(S_I):
S_I\in\mathbb S^{|I|}_+
\Big\} =
\operatorname{cone}\Big(
\bigcup_{\substack{I\subseteq[n]\\ |I|\le m}}
\iota_I(\mathbb S^{|I|}_+)
\Big),
\]
where the sum above is the Minkowski sum of cones. This formulation is
equivalent to Boman’s definition of factor-width, i.e., every
term $\iota_I(S_I)$, with $S_I\succeq0$, is a sum of rank-one
matrices $vv^\top$ with $|\operatorname{supp}(v)|\le m$, and
conversely every such rank-one term is supported on a principal block of
size at most $m$~\cite[Sec.~5.3]{ahmadi_majumdar_2019}.

By self-duality of the PSD cone, $\iota_I(\mathbb S^{|I|}_+)^*
=
\{X\in\mathbb S^n:\ X_I\succeq0\}$. Hence
\begin{equation}\label{eq:m-local}
(\mathcal{FW}_m^n)^*
=
\bigcap_{\substack{I\subseteq[n]\\ |I|\le m}}
\iota_I(\mathbb S^{|I|}_+)^*
=
\left\{
X\in\mathbb S^n:
X_I\succeq0\quad
\forall I\subseteq[n],\ |I|\le m
\right\} =: \mathbb{S}^{n,m}_+.
\end{equation}


Define
\[
\mathcal{A}_{m,\phi}
:=
\bigcup_{\substack{I\subseteq[n]\\ |I|\le m}}
\mathcal{B}_{I,\phi},
\quad
g_{m,\phi}(\mathbf{X})
:=
\min_{\mathbf{S}\in \mathcal{A}_{m,\phi}}
\langle \mathbf{S},\mathbf{X}\rangle.
\]
For every $\bar{X}\in\mathbb{S}^n$, the sparse spectral-gauge separation problem is
\[
g_{m,\phi}(\bar{X})
=
\min_{\substack{I\subseteq[n]\\ |I|\le m}}
g_{I,\phi}(\bar{X})
=
-\max_{\substack{I\subseteq[n]\\ |I|\le m}}
\phi_I ^\circ\!\big(\boldsymbol{\lambda}_-(\bar{X}_I)\big).
\]
Moreover, $g_{m,\phi}(\bar{X})=0
\iff
\bar{X}\in \mathbb{S}^{n,m}_+$. If the maximum is attained with support
$I^\star$, then the optimizer
$\mathbf{S}_{I^\star}^\star$ from Proposition~\ref{prop:fixed_support_spectral_gauge}
is optimal for $g_{m,\phi}(\bar{X})$. If
$g_{m,\phi}(\bar{X})<0$, then
$\langle \mathbf{S}_{I^\star}^\star,\mathbf{X}\rangle\ge 0$ for all
$\mathbf{X}\in\mathbb{S}^{n,m}_+$, and in particular for every $\mathbf{X}\in\Snp$. 

For $\phi=\ell_1$, this reduces to the
sparse eigencut separation~\cite{dey_molinaro_2018}:
\begin{equation}\label{eq:sparse-eigencut}
g_{m,\ell_1}(\mathbf{X})
=
-\max_{\substack{I\subseteq[n]\\ |I|= m}}
\big(-\lambda_{\min}(\mathbf{X}_I)\big)_+.
\end{equation}

This decomposition shows where the complexity of the oracle lies: given a prescribed support $I$, separation is the
spectral-gauge problem restricted to the block $\bar{X}_I$. The nontrivial part
is the outer maximization over $I$, which ranges over 
$\binom{n}{m}$ candidate supports and is thus combinatorial. 

\paragraph{General sparsity patterns}
Following Günlük et al.~\cite{gunluk_etal_2026},
let $E \subseteq [n]\times[n]$ be a symmetric index set containing the diagonal,
and define
\[
H_E := \{ \mathbf{S} \in \mathbb{S}^n : S_{ij} = 0 \ \forall (i,j)\notin E \},
\quad
\mathcal{K}_E := \mathbb{S}^n_+ \cap H_E.
\]
Given a symmetric gauge $\phi$ on $\mathbb{R}^n$, consider
\[
g_{E,\phi}(\mathbf{X})
:=
\min \Big\{
\langle \mathbf{S}, \mathbf{X} \rangle:\
\mathbf{S} \in \mathcal{K}_E \cap \uball_\phi
\Big\}.
\]
The condition $g_{E,\phi}(\mathbf{X}) \ge 0$ is equivalent to nonnegativity against
all $E$-supported PSD separators, and thus yields a reformulation of the SDP-$E$ relaxation~\cite[Eq.~(5)]{gunluk_etal_2026}. In particular,
when $\phi = \ell_1$, one recovers the problem of
\cite[Eq.~(15)]{gunluk_etal_2026}. 

For a general $E$, however, $\mathcal{K}_E$ is not necessarily orthogonally invariant, so the diagonalization argument of Theorem~\ref{thm:spectral-gauge} no longer implies a closed form. Such a formula is available only when the cone $\mathcal{K}_E$ is compatible with the spectral structure, as in the full PSD cone or in a fixed principal-block restriction.

\subsection{Spectral-gauge separators with rank limit}
\label{subsec:sparsified_gauges}
As discussed, sparsifying the separator can significantly complicate the separation problem. Instead, we propose to 
control the rank of
the separator, and show that it preserves tractability while still controlling spectral complexity.

\begin{lemma}[Rank and spectral $\ell_0$-function]
\label{lem:rank_equals_l0_spectrum}
Let $\mathbf{S}\in\mathbb{S}^n$ with spectral mapping $\boldsymbol{\lambda}(\mathbf{S})\in\mathbb{R}^n$. Then $\operatorname{rank}(\mathbf{S})=\|\boldsymbol{\lambda}(\mathbf{S})\|_0$, where $\|\mathbf{x}\|_0:=\big|\{i\in[n]:\, x_i\neq 0\}\big|$.
\end{lemma}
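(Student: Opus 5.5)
The plan is to use the spectral decomposition of $\mathbf{S}$ and the invariance of rank under multiplication by invertible matrices. Since $\mathbf{S}\in\mathbb{S}^n$, there is an orthogonal $\mathbf{Q}$ with $\mathbf{S}=\mathbf{Q}\,\bLa(\mathbf{S})\,\mathbf{Q}^\top$, where $\bLa(\mathbf{S})=\diag(\boldsymbol{\lambda}(\mathbf{S}))$ is the diagonal matrix introduced in Section~\ref{sec:notation}. Because $\mathbf{Q}$ and $\mathbf{Q}^\top$ are invertible, and multiplying on the left or right by an invertible matrix does not change the rank, we obtain $\operatorname{rank}(\mathbf{S})=\operatorname{rank}(\bLa(\mathbf{S}))$.

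It then remains to compute the rank of a diagonal matrix. The columns of $\diag(x)$ are $x_ie_i$. The nonzero columns are exactly those with $i\in\operatorname{supp}(x)$, and these columns are linearly independent because they are nonzero multiples of distinct canonical vectors. Hence $\operatorname{rank}(\diag(x))=|\operatorname{supp}(x)|=\|x\|_0$. Taking $x=\boldsymbol{\lambda}(\mathbf{S})$ gives $\operatorname{rank}(\mathbf{S})=\|\boldsymbol{\lambda}(\mathbf{S})\|_0$.

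There is no real obstacle here. The only point to check is that multiplicities are handled correctly: $\boldsymbol{\lambda}(\mathbf{S})$ repeats each eigenvalue according to its multiplicity, which is consistent with the diagonal of $\bLa(\mathbf{S})$ in the eigendecomposition. Therefore $\|\boldsymbol{\lambda}(\mathbf{S})\|_0$ counts the nonzero eigenvalues with multiplicity, which is exactly $n-\dim\ker\mathbf{S}$. This gives an alternative check via rank–nullity, since $\ker\mathbf{S}$ is spanned by the columns of $\mathbf{Q}$ associated with zero eigenvalues.
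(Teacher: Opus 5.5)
Your proof is correct and follows essentially the same argument as the paper: diagonalize $S=Q\,\Lambda(S)\,Q^\top$ with $Q$ orthogonal, use invertibility of $Q$ to get $\operatorname{rank}(S)=\operatorname{rank}(\Lambda(S))$, and count the nonzero diagonal entries. Your column-by-column justification of the rank of a diagonal matrix and your remark on multiplicities spell out what the paper's proof leaves implicit.
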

\begin{proof}
Decompose $\mathbf{S}=\mathbf{Q}\operatorname{diag}(\boldsymbol{\lambda}(\mathbf{S}))\mathbf{Q}^\top$ with $\mathbf{Q}$ orthogonal. Since $\mathbf{Q}$ is invertible, $\operatorname{rank}(\mathbf{S})=\operatorname{rank}\!\big(\operatorname{diag}(\boldsymbol{\lambda}(\mathbf{S}))\big)$, which is the number of nonzero eigenvalues.
\end{proof}

\begin{proposition}\label{prop:sparsified_basic}
Let $\phi$ be a symmetric gauge on $\mathbb{R}^n$, let $k\in[n]$, and define
\begin{equation*}
\mathcal{A}_{k}(\phi)
:=
\Big\{\mathbf{u}\in\mathbb{R}^n:\ \phi(\mathbf{u})\le 1,\ \|\mathbf{u}\|_0\le k\Big\},
\quad
\mathcal{K}_{k}(\phi):=\operatorname{conv}\big(\mathcal{A}_{k}(\phi)\big),
\end{equation*}
together with the associated \emph{gauge function} \cite[Eq.~(2)]{chandrasekaran_etal_2012}
\begin{equation}
\phi^{\langle k\rangle}(\mathbf{z})
:=
\inf\Big\{t>0:\ \mathbf{z}\in t\,\mathcal{K}_{k}(\phi)\Big\}\quad\forall z\in\R^n.
\label{eq:phi_m_def}
\end{equation}
Then $\phi^{\langle k\rangle}$ is a symmetric gauge on $\mathbb{R}^n$ with unit ball $\mathcal{K}_{k}(\phi)$.
\end{proposition}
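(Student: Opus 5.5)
The plan is to show that $\mathcal{K}_k(\phi)$ is a compact, convex, symmetric (under $z\mapsto -z$), permutation- and sign-invariant set with $0$ in its interior. The Minkowski gauge of such a set is a norm whose closed unit ball is the set itself, and its invariances transfer to the gauge. Concretely, I would verify in order: (i) compactness and convexity; (ii) $0\in\operatorname{int}\mathcal{K}_k(\phi)$; (iii) invariance of $\mathcal{A}_k(\phi)$ under signed permutations, which passes to the convex hull; (iv) the standard facts about Minkowski functionals.

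For (i), $\mathcal{A}_k(\phi)$ is closed: it is the intersection of the closed unit ball of $\phi$ with the finite union of coordinate subspaces $\{u:\operatorname{supp}(u)\subseteq J\}$, $|J|= k$, each of which is closed. It is also bounded, since $\phi$ is a norm and the ball $\{\phi\le 1\}$ is compact. The convex hull of a compact set in $\R^n$ is compact (Carathéodory), so $\mathcal{K}_k(\phi)$ is compact and convex. For (ii), since $k\ge 1$, each vector $\pm e_i/\phi(e_i)$ lies in $\mathcal{A}_k(\phi)$, and $\phi(e_i)=\phi(e_1)>0$ by permutation invariance. Hence $\mathcal{K}_k(\phi)$ contains the cross-polytope $\operatorname{conv}\{\pm e_i/\phi(e_1)\}$, which is the $\ell_1$ ball of radius $1/\phi(e_1)$ and has $0$ in its interior.

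For (iii), note that for a permutation matrix $P$ and a diagonal sign matrix $D$, the map $u\mapsto DPu$ preserves $\|u\|_0$ and preserves $\phi(u)$, because $\phi$ is absolute and permutation invariant. So $DP\mathcal{A}_k(\phi)=\mathcal{A}_k(\phi)$. Since $DP$ is linear, it commutes with taking convex hulls, giving $DP\mathcal{K}_k(\phi)=\mathcal{K}_k(\phi)$. The choice $D=-I$, $P=I$ gives central symmetry.

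For (iv), I would use the classical properties of the gauge of a compact convex set $K$ with $0\in\operatorname{int}K$. The infimum is finite because $K$ absorbs every point, and it is $0$ only at $z=0$ because $K$ is bounded. Positive homogeneity holds directly, and central symmetry upgrades it to absolute homogeneity. For subadditivity, if $x/s,\,y/t\in K$, then $(x+y)/(s+t)$ is a convex combination of these two points, so it lies in $K$. Closedness of $K$ gives $\{z:\phi^{\langle k\rangle}(z)\le 1\}=K$: if the gauge is at most $1$, then $z/t\in K$ for all $t>1$, and letting $t\downarrow 1$ gives $z\in K$. Finally, the signed-permutation invariance of $K$ implies $\phi^{\langle k\rangle}(DPz)=\phi^{\langle k\rangle}(z)$. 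Taking $P=I$ and $D$ a sign matrix shows $\phi^{\langle k\rangle}(z)=\phi^{\langle k\rangle}(|z|)$, i.e.\ absoluteness, and general $P$ gives permutation invariance. This is exactly Definition~\ref{def:norm}.

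None of these steps is deep. The only point needing care is step (i): one must justify that $\mathcal{A}_k(\phi)$ is closed, since the $\ell_0$ constraint is nonconvex, and that the convex hull of a compact set stays compact. Without these, the unit ball of the gauge would be the closure of $\mathcal{K}_k(\phi)$ rather than $\mathcal{K}_k(\phi)$ itself. I would write that step carefully and cite Carathéodory's theorem.
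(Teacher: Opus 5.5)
Your proposal is correct and follows essentially the same route as the paper: compactness of $\mathcal{A}_k(\phi)$ via the finite union of coordinate subspaces, the cross-polytope $\operatorname{conv}\{\pm e_i/\phi(e_1)\}$ for absorbency, and signed-permutation invariance passing to the convex hull and then to the gauge. The differences are minor. You prove the Minkowski-functional properties directly where the paper cites Rudin. You obtain definiteness from boundedness of $\mathcal{K}_k(\phi)$ rather than from the comparison $\phi\le\phi^{\langle k\rangle}$. You also check explicitly, using closedness, that the unit ball is exactly $\mathcal{K}_k(\phi)$, a point the paper's proof leaves implicit.
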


The proof of Proposition~\ref{prop:sparsified_basic} is given in Appendix~\ref{app:proof_phi}. For $y\in\R^n$, let $I_k(\mathbf{y}) \subseteq [n]$ be the index set of $k>0$ largest entries of $|\mathbf{y}|$, and
\[
\operatorname{trun}_k(\mathbf{y})
:=
\sum_{i \in I_k(\mathbf{y})} y_i \mathbf{e}_i \in \R^n.
\]

\begin{proposition}
\label{prop:sparsified_dual}
Let $\phi$ be a symmetric gauge and define $\phi^{\langle k\rangle}$ by \eqref{eq:phi_m_def}, then 
\begin{align*}
\big(\phi^{\langle k\rangle}\big)^\circ(\mathbf{y}) 
&=
\max\Big\{\mathbf{y}^\top\mathbf{u}:\ \phi(\mathbf{u})\le 1,\ \|\mathbf{u}\|_0\le k, u\in\R^n\Big\}\\
&=
\max_{\substack{I\subseteq[n]\\ |I|\le k}}\ \phi^\circ(\mathbf{y}\odot \mathbf{1}_I) =
\phi^\circ\big(\operatorname{trun}_k(\mathbf{y})\big)\quad\forall\mathbf{y}\in\mathbb{R}^n,
\end{align*}
where $\odot$ denotes componentwise multiplication and $\mathbf{1}_I$ is the indicator vector of $I$.
\end{proposition}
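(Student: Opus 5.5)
We prove the three equalities in sequence. Throughout we use Proposition~\ref{prop:sparsified_basic}: $\phi^{\langle k\rangle}$ is a norm with unit ball $\mathcal{K}_k(\phi)$.

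\emph{First equality.} By Definition~\ref{def:dualnorm}, $(\phi^{\langle k\rangle})^\circ(y)=\max\{|y^\top z|: z\in\mathcal{K}_k(\phi)\}$. This maximum is the support function of the symmetric convex set $\mathcal{K}_k(\phi)$, which is symmetric because $\mathcal{A}_k(\phi)$ is. A linear functional attains the same supremum over $\operatorname{conv}(\mathcal{A})$ as over $\mathcal{A}$. Moreover, $\mathcal{A}_k(\phi)$ is compact: it is a finite union of closed bounded pieces indexed by supports of size at most $k$. Hence the maximum equals $\max\{y^\top u: u\in\mathcal{A}_k(\phi)\}$, and the absolute value can be dropped since $\mathcal{A}_k(\phi)=-\mathcal{A}_k(\phi)$.

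\emph{Second equality.} We split $\mathcal{A}_k(\phi)$ by support: $\mathcal{A}_k(\phi)=\bigcup_{|I|\le k}\{u:\phi(u)\le1,\ \operatorname{supp}(u)\subseteq I\}$. For a fixed $I$ and $u$ supported in $I$, we have $y^\top u=(y\odot\mathbf 1_I)^\top u$. It therefore suffices to show
\[
\max\{w^\top u:\phi(u)\le 1,\ \operatorname{supp}(u)\subseteq I\}=\phi^\circ(w), \qquad w:=y\odot\mathbf 1_I.
\]
The inequality ``$\le$'' follows from Cauchy--Schwarz~\eqref{eq:cs}. For ``$\ge$'', we adapt the argument behind~\eqref{eq:dual-nonneg} to signed vectors. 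Take a maximizer $z$ of $\phi^\circ(w)$ and replace it by $u_i:=\operatorname{sign}(w_i)|z_i|$ for $i\in\operatorname{supp}(w)$ and $u_i:=0$ otherwise. Then $|u|\le|z|$, so monotonicity of the absolute norm $\phi$ gives $\phi(u)\le1$, and $w^\top u=\sum_i |w_i||z_i|\ge |w^\top z|$. Maximizing over $|I|\le k$ gives the second equality.

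\emph{Third equality.} We show that $\max_{|I|\le k}\phi^\circ(y\odot\mathbf 1_I)$ is attained at $I=I_k(y)$, where $y\odot\mathbf 1_{I_k(y)}=\operatorname{trun}_k(y)$. Since $\phi^\circ$ is a symmetric gauge (it is absolute by the cited result, and permutation invariant by duality), $\phi^\circ(y\odot\mathbf 1_I)=\phi^\circ(|y|\odot\mathbf 1_I)$. We then bound $|y|\odot\mathbf 1_I$ by a permutation of $\operatorname{trun}_k(|y|)$. Write $I=\{i_1,\dots,i_r\}$ with $r\le k$, ordered by decreasing $|y_{i_j}|$. The $j$-th largest entry of $|y|$ on $I$ is at most the $j$-th largest entry of $|y|$ overall. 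Hence there is a permutation $P$ such that $|y|\odot\mathbf 1_I\le P\,|\operatorname{trun}_k(y)|$ componentwise. Monotonicity and permutation invariance of $\phi^\circ$ then yield $\phi^\circ(y\odot\mathbf 1_I)\le\phi^\circ(\operatorname{trun}_k(y))$, and equality holds for $I=I_k(y)$.

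The main obstacle is this last step, specifically building the permutation $P$ explicitly. We will match the $j$-th ranked index of $I$ with the $j$-th ranked index of $I_k(y)$, and extend the map arbitrarily to a bijection on $[n]$.
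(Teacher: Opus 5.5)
Your proposal is correct and follows the same three-step route as the paper: the linear functional attains its maximum over $\operatorname{conv}(\mathcal{A}_k(\phi))$ on $\mathcal{A}_k(\phi)$; splitting by supports gives $\phi^\circ(y\odot\mathbf{1}_I)$; and monotonicity plus permutation invariance of $\phi^\circ$ select the $k$ largest magnitudes. You also supply two details the paper leaves implicit, namely the sign-alignment argument for the fixed-support dual and the explicit domination $|y|\odot\mathbf{1}_I\le P\,|\operatorname{trun}_k(y)|$.
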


\begin{proof}
The linear function $\langle y, \cdot \rangle$ attains its maximum on the unit ball $\mathcal{K}_k(\phi)=\operatorname{conv}(\mathcal{A}_k(\phi))$ in $\mathcal{A}_k(\phi)$, giving the first equality. For fixed $I$, maximizing over vectors supported on $I$ gives $\phi^\circ(\mathbf{y}\odot\mathbf{1}_I)$. By monotonicity and symmetry of $\phi^\circ$, the maximum over all $|I|\le k$ is attained by keeping the $k$ largest magnitudes of $\mathbf{y}$. 
\end{proof}

\begin{theorem}
\label{cor:sparsified_oracle}
Let $\phi$ be any symmetric gauge on $\R^n$ and $\phi^{\langle k\rangle}$ defined in~\eqref{eq:phi_m_def}. Then, for $\mathbf{X}\in\mathbb{S}^n$ with spectral decomposition $\mathbf{X}=\mathbf{Q}\,\Lambda(\mathbf{X})\,\mathbf{Q}^\top$, the spectral gauge separation of $\mathbf{X}$ from $\Snp$ reads

\[
g_{\phi^{\langle k\rangle}}(\mathbf{X})
=
-\big(\phi^{\langle k\rangle}\big)^\circ(\lambda_-(\mathbf{X}))
=
-\phi^\circ\big(\operatorname{trun}_k(\lambda_-(\mathbf{X}))\big).
\]
Moreover, there exists an optimal separator of the form $\mathbf{S}^\star
=
\mathbf{Q}\,\operatorname{diag}(\mathbf{z}^\star)\,\mathbf{Q}^\top$ with
\[
\mathbf{z}^\star \in \argmax\{(\operatorname{trun}_k(\lambda_-(\mathbf{X})))^\top \mathbf{z}:\ \mathbf{z}\ge \mathbf{0},\ \phi(\mathbf{z})\le 1, \ \operatorname{supp}(z)\subseteq \operatorname{supp}\!\left(\operatorname{trun}_k(\lambda_-(X))\right)\}.
\]
In particular, $\operatorname{rank}(\mathbf{S}^\star)\le k$. If $\mathbf{X}\not\succeq \mathbf{0}$, then $\langle \mathbf{S}^\star,\mathbf{X}\rangle<0$ and the cut $\mathcal{C}_{S^\star}$ is valid.
\end{theorem}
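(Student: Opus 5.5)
The plan is to apply Theorem~\ref{thm:spectral-gauge} with the symmetric gauge $\phi^{\langle k\rangle}$ in place of $\phi$; Proposition~\ref{prop:sparsified_basic} guarantees that $\phi^{\langle k\rangle}$ is a symmetric gauge, so the theorem applies verbatim. This immediately gives $g_{\phi^{\langle k\rangle}}(X)=-(\phi^{\langle k\rangle})^\circ(\lambda_-(X))$. Proposition~\ref{prop:sparsified_dual} then rewrites the right-hand side as $-\phi^\circ(\operatorname{trun}_k(\lambda_-(X)))$, which establishes the displayed formula.

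For the separator, I will not use the abstract maximizer over the unit ball $\mathcal{K}_k(\phi)$ supplied by Theorem~\ref{thm:spectral-gauge}. Instead I will build one explicitly. Set $y:=\lambda_-(X)\in\R^n_+$ and $y_k:=\operatorname{trun}_k(y)$, so that $y_k\ge 0$ and $|\operatorname{supp}(y_k)|\le k$. By~\eqref{eq:dual-nonneg} applied to $\phi$ and $y_k$, there is a maximizer $z^\star\ge 0$ with $\phi(z^\star)\le 1$, $\operatorname{supp}(z^\star)\subseteq\operatorname{supp}(y_k)$, and $y_k^\top z^\star=\phi^\circ(y_k)$.

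Three properties of $z^\star$ need checking. First, $\|z^\star\|_0\le k$, so $z^\star\in\mathcal{A}_k(\phi)\subseteq\mathcal{K}_k(\phi)$; hence $\phi^{\langle k\rangle}(z^\star)\le 1$. Second, since $z^\star$ is supported on $\operatorname{supp}(y_k)$, where $y$ and $y_k$ coincide, we have $y^\top z^\star=y_k^\top z^\star$. Third, $\operatorname{supp}(z^\star)\subseteq\operatorname{supp}(y)$, so $z^\star$ is orthogonal to $\lambda_+(X)$.

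With these in hand, define $S^\star=Q\diag(z^\star)Q^\top$. It belongs to $\Snp\cap\uball_{\phi^{\langle k\rangle}}$, and its value is
\[
\ip{S^\star}{X}=\lambda_+(X)^\top z^\star-y^\top z^\star=-\phi^\circ(y_k)=g_{\phi^{\langle k\rangle}}(X).
\]
So $S^\star$ is optimal, and it has the stated form. Lemma~\ref{lem:rank_equals_l0_spectrum} gives $\operatorname{rank}(S^\star)=\|z^\star\|_0\le k$.

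It remains to show strict violation. If $X\not\succeq 0$, then $y\neq 0$, so $y_k\neq 0$: it keeps at least the largest entry of $y$, which is positive since $k\ge 1$. Because $\phi^\circ$ is a norm, $\phi^\circ(y_k)>0$, hence $\ip{S^\star}{X}<0$. Validity of $\mathcal{C}_{S^\star}$ follows from $S^\star\in\Snp$ and self-duality.

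The main subtlety is exactly the mismatch between the two unit balls. Theorem~\ref{thm:spectral-gauge} by itself only yields a maximizer over $\mathcal{K}_k(\phi)$, which is a convex hull and could contain dense points of higher rank. The explicit construction above avoids this: it picks a maximizer lying in $\mathcal{A}_k(\phi)$, using the first equality of Proposition~\ref{prop:sparsified_dual}, which states that the maximum of a linear function over the convex hull is attained on $\mathcal{A}_k(\phi)$. This is what secures the rank bound.
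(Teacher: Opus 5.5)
Your proposal is correct and is essentially the paper's proof. Theorem~\ref{thm:spectral-gauge} applied to the symmetric gauge $\phi^{\langle k\rangle}$, together with Proposition~\ref{prop:sparsified_dual}, gives the value, and a maximizer $z^\star$ of the truncated vector problem lies in $\mathcal{A}_k(\phi)$, so $S^\star=Q\diag(z^\star)Q^\top$ is feasible, has rank at most $k$, and attains $-\phi^\circ(\operatorname{trun}_k(\lambda_-(X)))$. You also spell out details the paper leaves implicit: the orthogonality of $z^\star$ to $\lambda_+(X)$, the agreement of $\lambda_-(X)$ with its truncation on $\operatorname{supp}(z^\star)$, and the strict violation via $\operatorname{trun}_k(\lambda_-(X))\neq 0$.
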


\begin{proof}
The formula follows by the application of Theorem~\ref{thm:spectral-gauge} to $\phi^{\langle k\rangle}$, and then  Proposition~\ref{prop:sparsified_dual}. If $\mathbf{z}^\star$ solves the vector problem, we may replace it by $\mathbf{z}^\star\odot\mathbf{1}_I$, where $I\subseteq[n]$ is the support of  $\operatorname{trun}_k(\lambda_-(\mathbf{X}))$. Then $\mathbf{z}^\star\in\mathcal{A}_k(\phi)$, so $\phi^{\langle k\rangle}(\mathbf{z}^\star)\le 1$, and thus $\mathbf{S}^\star=\mathbf{Q}\operatorname{diag}(\mathbf{z}^\star)\mathbf{Q}^\top\in \sball_{\phi^{\langle k\rangle}}$ with $\operatorname{rank}(\mathbf{S}^\star)=\|\mathbf{z}^\star\|_0\le k$ by Lemma~\ref{lem:rank_equals_l0_spectrum}. Finally, $\langle \mathbf{S}^\star,\mathbf{X}\rangle
=
-(\operatorname{trun}_k(\lambda_-(\mathbf{X})))^\top\mathbf{z}^\star
=
-\phi^\circ(\operatorname{trun}_k(\lambda_-(\mathbf{X})))$, so $\mathbf{S}^\star$ is optimal. 
\end{proof}

Thus, the rank-limit parameterization boils down to evaluating the original dual gauge on the $k$ most negative eigenvalues. In particular, the $\ell_1$ eigencuts result from the rank restriction $k=1$ applied to any $\ell_p$-gauge, $1\le p\le\infty$.


\begin{remark}[Dual $k$-norm]
Specializing Proposition~\ref{prop:sparsified_dual} and Theorem~\ref{cor:sparsified_oracle} to $\phi=\ell_\infty$ for any $k\in[n]$ gives
\[
(\ell_\infty^{\langle k\rangle})^\circ(y)
=
\|\operatorname{trun}_k(y)\|_1
=
\sum_{i=1}^k |y|_{(i)}
=: \|y\|_{(k)}\quad\text{and}\quad g_{\ell_\infty^{\langle k\rangle}}(X)
=
-\|\lambda_-(X)\|_{(k)}
\]
where $\|\cdot\|_{(k)}$ denotes the vector $k$-norm, the symmetric gauge defined as the sum of the
$k$ largest absolute components. 
Therefore, 
these $k$-rank nuclear cuts coincide with the $\|\cdot\|_{(k)} ^\circ$-spectral gauge cuts.
\end{remark}

\section{Spectral-gauge cutting-plane algorithm}
\label{sec:algorithm}
In Algorithm~\ref{alg:master_oracle_gauge}, we integrate the separation oracle~\eqref{eq:gphi} into a cutting-plane method for solving~\eqref{eq:sdp}. Starting from an outer approximation $\mathcal{R}\subseteq\mathcal{A}\cap\Plin$ defined on 
a cone $\mathcal A\supseteq\mathbb S^n_+$, the proposed algorithm solves a sequence of master problems $(\mathcal{M}_i)_i$
\begin{equation}\label{eq:Mi}
\begin{aligned}
\mathcal{M}_i:\ \min_{\mathbf{X}\in\Sn}\quad & \ip{\mathbf{C}}{\mathbf{X}}\\
\text{s.t.}\quad
& \mathbf{X}\in\mathcal{R}\subseteq\mathcal{A}\cap\Plin,\\
& \ip{\mathbf{S}}{\mathbf{X}} \ge 0 \quad \forall\,\mathbf{S}\in\mathcal{S}_i.
\end{aligned}
\end{equation}
and adds a spectral-gauge cut (for a given symmetric gauge $\phi$) whenever the current solution is not PSD (within a tolerance $\varepsilon$). 
In what follows, we discuss possible initial relaxations $\mathcal{A}$ and  
boundedness conditions. We prove the convergence of the algorithm in the general context, and 
illustrate specific cases.



\medskip

\begin{algorithm}[H]
\caption{Spectral-gauge cutting-plane method}
\label{alg:master_oracle_gauge}

\KwRequire{
$\mathbf{C}\in\Sn$; polyhedron $\Plin\subseteq\Sn$; outer-approximation $\mathcal{A}$ with $\mathbb{S}^n_+\subseteq \mathcal{A}\subseteq \Sn$; symmetric gauge $\phi$; tolerance $\varepsilon\ge 0$.
}
\KwEnsure{
An $\varepsilon$-solution $X$ to~\eqref{eq:sdp}, i.e., with $\blambda_{\min}(X)>-\varepsilon$.

}

$\mathcal{S}_0\gets\emptyset$\;

\For{$i\gets 0,1,2,\dots$}{
  Solve $\mathcal{M}_i$: get optimizer $\mathbf{X}^{(i)}$ with eigendecomposition\;
  \[
    \mathbf{X}^{(i)}=\mathbf{Q}^{(i)}\operatorname{diag}\!\bigl(\lambda^{(i)}_1,\dots,\lambda^{(i)}_n\bigr)(\mathbf{Q}^{(i)})^\top,
    \quad \lambda^{(i)}_1\le\cdots\le\lambda^{(i)}_n.
  \]

  Set $\blambda_{-}^{(i)}:=
  (-\lambda^{(i)})_+$
  and compute 
  $$\boldsymbol{z}^{(i)}\in\argmax\left\{(\blambda_{-}^{(i)})^\top\boldsymbol{z}:\ \boldsymbol{z}\ge \mathbf{0},\ \phi(\boldsymbol{z})\le 1, \ \operatorname{supp}(z)\subseteq \operatorname{supp}(\lambda_-^{(i)})\right\}.$$

  Form $\mathbf{S}^{(i)}:=\mathbf{Q}^{(i)}\operatorname{diag}\!\bigl({\boldsymbol{z}}^{(i)}\bigr)(\mathbf{Q}^{(i)})^\top$.

  \eIf{$\blambda_1^{(i)}>-\varepsilon$}{
    \KwRet{$\mathbf{X}^{(i)}$}\;
  }{
    $\mathcal{S}_{i+1}\gets \mathcal{S}_i \cup\{\mathbf{S}^{(i)}\}$\;
  }
}
\end{algorithm}

\subsection{Outer-approximations of the PSD cone}\label{subsec:outer-approximations}
To establish the convergence of the algorithm, we assume that there exists $T>0$ such that every feasible solution $\mathbf X$ of
\eqref{eq:sdp} satisfies $\operatorname{tr}(\mathbf X)\le T$. 
Such a bound is explicit in the computation, if not in the formulation.
We then enforce this condition in the master problems $\mathcal{M}_i$
\[
\mathcal{R}:=
\Bigl\{
\mathbf{X}\in\Sn:
\mathbf{X}\in\Plin,\
\mathbf{X}\in\mathcal{A},\ \operatorname{tr}(\mathbf{X})\le T
\Bigr\},
\]
with convex cone $\mathcal{A}\supseteq\mathbb{S}^n_+$.
We also assume 
that every nonzero element $X$ of $\mathcal{A}$ has a strictly positive trace, so that it can be normalized $\|\mathbf{X}\|_{\mathcal X}\le \alpha_{\mathcal X}(\mathcal A)\,\operatorname{tr}(\mathbf X)$, where
\[
\alpha_{\mathcal X}(\mathcal A)
:=
\sup\Bigl\{
\|\mathbf{X}\|_{\mathcal X}:\ \mathbf{X}\in\mathcal A,\ \operatorname{tr}(\mathbf{X})=1
\Bigr\}
\]
measures the size of cone $\mathcal A$ in an arbitrary norm $\|\cdot\|_{\mathcal X}$ on $\mathbb S^n$. 

Let $\mathcal F=\Plin\cap\Snp \neq \varnothing$ denote the feasible set of the original SDP in~\eqref{eq:sdp}.
If $\alpha_{\mathcal X}(\mathcal A)<\infty$, then $\mathcal R$ is a compact set containing
$\mathcal F$ and the feasible sets
\begin{equation}\label{eq:compact_nesting}
\mathcal{R}^{(i)}:=\Bigl\{\mathbf{X}\in\mathcal{R}:\ \ip{\mathbf{S}^{(t)}}{\mathbf{X}}\ge 0,\ t=1,\dots,i-1\Bigr\}, \quad \mathcal R^{(1)} \supseteq \mathcal R^{(2)} \supseteq \cdots \supseteq \mathcal F.
\end{equation}
Hence, every master problem $\mathcal{M}_i$ is feasible and attains an optimum.

The dual factor-width-$m$ cone hierarchy in~\eqref{eq:m-local}
provides us with a set of candidates for $\mathcal{A}$ meeting our assumptions:
$\mathcal{LP}\supseteq \mathcal{SOC}=\mathbb{S}^{n,2}_+ \supseteq \mathbb{S}^{n,3}_+ \supseteq \cdots \supseteq \mathbb{S}^{n,n}_+
=\Snp$,
where the case $m=2$ is the second-order cone
\begin{equation}\label{eq:FW2_SOC}
\mathcal{SOC}
:=
\Bigl\{
\mathbf{X}\in\mathbb{S}^n:\ X_{jj}\ge 0\ \ \forall j,\ \ X_{jl}^2\le X_{jj}X_{ll}\ \ \forall j\neq l
\Bigr\},
\end{equation}
and $\mathcal{LP}$ is the linear relaxation of \eqref{eq:FW2_SOC} obtained from the inequality of arithmetic and geometric means, namely
\begin{equation}\label{eq:LP_def}
\mathcal{LP}
:=
\Bigl\{
\mathbf{X}\in\mathbb{S}^n:\ X_{jj}\ge 0\ \ \forall j,\ \
X_{jj}+X_{ll}\pm 2X_{jl}\ge 0\ \ \forall j\neq l
\Bigr\}.
\end{equation}
Note that $\alpha_{\mathcal X}(\mathcal{LP})<\infty$, hence
\begin{equation}\label{eq:hierarchy-outer-approx}
\alpha_{\mathcal X}(\mathbb S^n_+)
\le
\alpha_{\mathcal X}(\mathbb{S}^{n,n-1}_+)
\le \cdots \le
\alpha_{\mathcal X}(\mathbb{S}^{n,3}_+)
\le
\alpha_{\mathcal X}(\mathcal{SOC})
\le
\alpha_{\mathcal X}(\mathcal{LP})<\infty.
\end{equation}

\subsection{Algorithm convergence analysis}\label{subsec:accumulation}

Recall the compact set $\uball_\phi$ and $g_\phi$ from Definition~\ref{def:Bgauge}. The convergence analysis depends on two geometric constants: $\alpha_{\mathcal X}(\mathcal A)$ of the chosen $\mathcal A$, and the uniform bound
\[
L_{\mathcal X}(\phi):=\max_{\mathbf S\in\sball_\phi}\|\mathbf S\|^\circ_{\mathcal X}.
\]

$L_{\mathcal X}(\phi)$ is a finite constant and Cauchy-Schwarz inequality~\eqref{eq:cs} yields
\begin{equation}\label{eq:Lipschitz_gauge}
|\ip{\mathbf{S}}{\mathbf{X}_1}-\ip{\mathbf{S}}{\mathbf{X}_2}|
\le
L_{\mathcal X}(\phi)\,\|\mathbf{X}_1-\mathbf{X}_2\|_{\mathcal X}
\quad
\forall\,\mathbf X_1,\mathbf X_2\in\Sn,\ \forall\,\mathbf S\in\sball_\phi.
\end{equation}
In particular,
$L_{\mathcal X}(\phi)$ is monotone under enlargements of $\uball_\phi$, so it is nondecreasing
along the $\ell_p$ gauge and rank-$k$ hierarchies.

\begin{lemma}[Finite $\varepsilon$-termination bound]\label{lem:eventual_eps_psd_gauge}
Assume that $\alpha_{\mathcal X}(\mathcal A)<\infty$. Let
\[
d:=\dim(\mathbb S^n)=\frac{n(n+1)}{2},
\quad
N^\varepsilon_\mathcal{X}(\mathcal{A},\phi)
:=
1+\left\lceil
\left(
1+\frac{2\,\alpha_{\mathcal X}(\mathcal A)\,T\,L_{\mathcal X}(\phi)\,\phi(e_1)}{\varepsilon}
\right)^d
\right\rceil,
\]
where $e_1$ is any canonical basis vector. Then there exists $i\in [N^\varepsilon_\mathcal{X}(\mathcal{A},\phi)]$ such that
one has $\lambda_{\min}(\mathbf X^{(i)})>-\varepsilon$. Equivalently,
Algorithm~\ref{alg:master_oracle_gauge} terminates, with stopping tolerance
$\varepsilon > 0$, after at most $N^\varepsilon_\mathcal{X}(\mathcal{A},\phi)$ iterations.
\end{lemma}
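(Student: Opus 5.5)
The plan is a packing (volume) argument in $\Sn\cong\R^d$, normed by $\|\cdot\|_{\mathcal X}$. Suppose, for contradiction, that $\lambda_{\min}(\mathbf X^{(i)})\le-\varepsilon$ for every $i\in[N]$, where $N:=N^\varepsilon_{\mathcal X}(\mathcal A,\phi)$. Then Algorithm~\ref{alg:master_oracle_gauge} does not stop during these iterations and adds a cut $\mathbf S^{(i)}$ at each one. I will show that the iterates $\mathbf X^{(1)},\dots,\mathbf X^{(N)}$ are uniformly separated and lie in a bounded set, and that there are too many of them to fit.

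\emph{Step 1 (boundedness).} Every iterate lies in $\mathcal R$, so $\mathbf X^{(i)}\in\mathcal A$ and $\operatorname{tr}(\mathbf X^{(i)})\le T$. The trace is nonnegative on $\mathcal A$, since nonzero elements have positive trace. By the normalization defining $\alpha_{\mathcal X}(\mathcal A)$, we get $\|\mathbf X^{(i)}\|_{\mathcal X}\le \alpha_{\mathcal X}(\mathcal A)\,T=:R$.

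\emph{Step 2 (violation lower bound).} By Theorem~\ref{thm:spectral-gauge}, $\ip{\mathbf S^{(i)}}{\mathbf X^{(i)}}=-\phi^\circ(\lambda_-(\mathbf X^{(i)}))$. Let $j$ be the index of the smallest eigenvalue, so $\lambda_-(\mathbf X^{(i)})_j\ge\varepsilon$. The vector $z=e_j/\phi(e_j)$ is feasible in \eqref{eq:dual-nonneg}, and $\phi(e_j)=\phi(e_1)$ by symmetry of $\phi$. Hence
\[
\ip{\mathbf S^{(i)}}{\mathbf X^{(i)}}\le -\varepsilon/\phi(e_1).
\]

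\emph{Step 3 (separation).} For $k>i$, the iterate $\mathbf X^{(k)}$ is feasible for $\mathcal M_k$, which contains the cut $\ip{\mathbf S^{(i)}}{\mathbf X}\ge0$. Therefore $\ip{\mathbf S^{(i)}}{\mathbf X^{(k)}-\mathbf X^{(i)}}\ge\varepsilon/\phi(e_1)$. Since $\mathbf S^{(i)}\in\sball_\phi$, inequality~\eqref{eq:Lipschitz_gauge} gives
\[
\|\mathbf X^{(k)}-\mathbf X^{(i)}\|_{\mathcal X}\ge \delta:=\frac{\varepsilon}{\phi(e_1)\,L_{\mathcal X}(\phi)}
\quad\text{for all } i<k.
\]

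\emph{Step 4 (packing).} The $\|\cdot\|_{\mathcal X}$-balls of radius $\delta/2$ centered at the iterates are pairwise disjoint. They are all contained in the ball of radius $R+\delta/2$ centered at $0$. Lebesgue volume in $\R^d$ scales as $r^d$ for balls of a fixed norm, so the number of iterates is at most
\[
\left(\frac{R+\delta/2}{\delta/2}\right)^d=\left(1+\frac{2\alpha_{\mathcal X}(\mathcal A)\,T\,L_{\mathcal X}(\phi)\,\phi(e_1)}{\varepsilon}\right)^d<N.
\]
This contradicts having $N$ such iterates, so some $i\in[N]$ satisfies $\lambda_{\min}(\mathbf X^{(i)})>-\varepsilon$, and the algorithm stops there.

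The main obstacle is Step 2: converting the eigenvalue condition $\lambda_{\min}\le-\varepsilon$ into a lower bound on $\phi^\circ(\lambda_-)$ that involves only $\phi(e_1)$. Using the test vector $e_j/\phi(e_j)$ together with the permutation invariance of $\phi$ handles this. The remaining steps are routine, including the off-by-one bookkeeping that accounts for the leading $1+$ in $N$.
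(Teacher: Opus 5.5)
Your proposal is correct and follows essentially the same route as the paper's proof. It uses the same three ingredients: the scaled canonical vector $e_j/\phi(e_1)$ to get $g_\phi(\mathbf X^{(i)})\le-\varepsilon/\phi(e_1)$; the Lipschitz bound~\eqref{eq:Lipschitz_gauge} to separate later iterates by $\varepsilon/(\phi(e_1)L_{\mathcal X}(\phi))$; and volume packing with balls of half that radius inside $\uball_{\mathcal X}(\mathbf 0,\alpha_{\mathcal X}(\mathcal A)T+R)$. The only difference is cosmetic: you argue by contradiction on the first $N$ iterates, whereas the paper bounds the whole failing set $\mathcal N_\varepsilon$ via finite subsets, a form it then reuses in Theorem~\ref{thm:convergence_gauge}.
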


\begin{proof}
Fix $\varepsilon>0$ and set $\eta:=\varepsilon/\phi(e_1)$. Since $e_i/\phi(e_1)$ is
feasible in the dual-norm formula~\eqref{eq:dual-nonneg} for every $i$, we have
$\phi^\circ(y)\ge \|y\|_\infty/\phi(e_1)$ for every $y\in\mathbb R^n_+$. Hence, by
Theorem~\ref{thm:spectral-gauge},
$g_\phi(\mathbf X)\le -\|\lambda_-(\mathbf X)\|_\infty/\phi(e_1)$ for every
$\mathbf X\in\Sn$. If Algorithm~\ref{alg:master_oracle_gauge} has not terminated at
iteration $i$, then $\lambda_{\min}(\mathbf X^{(i)})\le -\varepsilon$, and therefore
$g_\phi(\mathbf X^{(i)})\le -\eta$.

Choose
$\mathbf S^{(i)}\in\argmin_{\mathbf S\in\sball_\phi}\ip{\mathbf S}{\mathbf X^{(i)}}$.
Then \eqref{eq:Lipschitz_gauge} gives, for every $\mathbf Y\in\Sn$,
\[
\ip{\mathbf S^{(i)}}{\mathbf Y}
\le
g_\phi(\mathbf X^{(i)})+L_{\mathcal X}(\phi)\,\|\mathbf Y-\mathbf X^{(i)}\|_{\mathcal X}.
\]
Define $R=\eta/2L_{\mathcal X}(\phi)$.
Hence, every $\mathbf Y$ satisfying
$\|\mathbf Y-\mathbf X^{(i)}\|_{\mathcal X}< 2R$ violates the
cut added at iteration $i$, so no later iterate can belong to that open ball.
Therefore the family of balls
$\uball_{\mathcal X}\!\left(\mathbf X^{(i)},\, R\right)$
centered at iterates $X^{(i)}$ failing the stopping test, i.e., for all $i\in\mathcal N_\varepsilon
:=
\{i\in\mathbb N:\lambda_{\min}(\mathbf X^{(i)})\le -\varepsilon\}$, have pairwise disjoint interiors, and they all share the same volume $\operatorname{vol}_d(\mathcal B_{\mathcal X}(\mathbf 0,R))= c_{\mathcal X}R^d$, in Lebesgue measure, for a constant $c_{\mathcal X}>0$ related to $d$-space $(\Sn,\mathcal{X})$.
By~\eqref{eq:compact_nesting}, all iterates lie in
$\mathcal R\subseteq \uball_{\mathcal X}\!\left(\mathbf 0,\alpha_{\mathcal X}(\mathcal A)\,T\right)$
so the family of disjoint balls is included in
$
\uball_{\mathcal X}\!\left(\mathbf 0,\alpha_{\mathcal X}(\mathcal A)\,T+
R\right)$.
Then, comparing volumes gives, for every finite subset $J\subseteq\mathcal N_\varepsilon$,
$$|J|\,c_{\mathcal X}R^d
\le
c_{\mathcal X}(\alpha_{\mathcal X}(\mathcal A)T+R)^d.$$
Since $c_{\mathcal X}>0$, dividing by $c_{\mathcal X}R^d$ gives
\[
|J|
\le
\left(
1+
\frac{\alpha_{\mathcal X}(\mathcal A)T}{R}
\right)^d
=
\left(
1+\frac{2\,\alpha_{\mathcal X}(\mathcal A)\,T\,L_{\mathcal X}(\phi)\,\phi(e_1)}{\varepsilon}
\right)^d.
\]
Taking the supremum over all finite subsets $J\subseteq\mathcal N_\varepsilon$, we obtain
\begin{align*}
|\mathcal N_\varepsilon|  =\sup_{J\subseteq\mathcal N_\varepsilon}\bigl\{|J|: \ J \text{ finite}\bigr\} < 1+\left\lceil
\left(
1+\frac{2\,\alpha_{\mathcal X}(\mathcal A)\,T\,L_{\mathcal X}(\phi)\,\phi(e_1)}{\varepsilon}
\right)^d
\right\rceil = N^\varepsilon_\mathcal{X}(\mathcal{A},\phi).
\end{align*}
Thus, not all of the first $N^\varepsilon_\mathcal{X}(\mathcal{A},\phi)$ iterates can fail the stopping test in Algorithm~\ref{alg:master_oracle_gauge}.
\end{proof}

\begin{theorem}[Convergence of cut method]\label{thm:convergence_gauge}
Let $\mathcal{A}\subseteq\Sn$ be a closed convex cone with
$\Snp\subseteq\mathcal{A}$, $\alpha_{\mathcal X}(\mathcal{A})<\infty$, and $tr(X)>0$ for all $X\in\mathcal{A}\setminus\{0\}$. Then, for any symmetric gauge $\phi$, Algorithm~\ref{alg:master_oracle_gauge}, run
with stopping tolerance $\varepsilon=0$, either terminates finitely at an optimal solution of the
SDP~\eqref{eq:sdp}, or every limit point of the generated infinite sequence
$(\mathbf{X}^{(i)})_{i}$ is an optimal solution of the SDP~\eqref{eq:sdp}.
\end{theorem}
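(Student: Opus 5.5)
The plan has two cases: finite termination and an infinite run, with the infinite case handled by a Kelley-type accumulation argument. Throughout I use the compactness and nesting in~\eqref{eq:compact_nesting}, the Lipschitz bound~\eqref{eq:Lipschitz_gauge}, and the closed form of Theorem~\ref{thm:spectral-gauge}.

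\emph{Finite termination.} With $\varepsilon=0$, the algorithm stops at iteration $i$ only if $\lambda_{\min}(\mathbf X^{(i)})\ge 0$ (I read the test as $\lambda_1^{(i)}\ge -\varepsilon$ in the limit case). Then $\mathbf X^{(i)}\in\Plin\cap\Snp=\mathcal F$. Since $\mathcal F\subseteq\mathcal R^{(i)}$, its value $\ip{\mathbf C}{\mathbf X^{(i)}}$ is a lower bound on the SDP optimum. It is also attained by a feasible point, so $\mathbf X^{(i)}$ is optimal.

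\emph{Infinite run.} All iterates lie in the compact set $\mathcal R$, since $\alpha_{\mathcal X}(\mathcal A)<\infty$ and $\operatorname{tr}\le T$. Hence limit points exist. Let $\mathbf X^{(i_j)}\to\bar{\mathbf X}$ be a subsequence. Closedness of $\mathcal A$ and $\Plin$ gives $\bar{\mathbf X}\in\mathcal R$.

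\emph{Key step: $\bar{\mathbf X}\succeq 0$.} For $j<l$, the cut added at iteration $i_j$ is in $\mathcal S_{i_l}$, so $\ip{\mathbf S^{(i_j)}}{\mathbf X^{(i_l)}}\ge 0$. Combining this with~\eqref{eq:Lipschitz_gauge} gives
\[
g_\phi(\mathbf X^{(i_j)})=\ip{\mathbf S^{(i_j)}}{\mathbf X^{(i_j)}}\ge -L_{\mathcal X}(\phi)\,\|\mathbf X^{(i_l)}-\mathbf X^{(i_j)}\|_{\mathcal X}.
\]
Letting $l\to\infty$ and then $j\to\infty$ yields $\liminf_j g_\phi(\mathbf X^{(i_j)})\ge 0$. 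Theorem~\ref{thm:spectral-gauge} gives $g_\phi=-\phi^\circ\circ\lambda_-$, which is continuous because $\lambda$ is continuous, $\lambda_-$ is continuous, and norms are continuous. Therefore $g_\phi(\bar{\mathbf X})\ge 0$, and by Theorem~\ref{thm:spectral-gauge} this means $\bar{\mathbf X}\in\Snp$. Alternatively, one can use $\phi^\circ(y)\ge\|y\|_\infty/\phi(e_1)$ as in Lemma~\ref{lem:eventual_eps_psd_gauge} to obtain $\lambda_{\min}(\bar{\mathbf X})\ge0$ directly. So $\bar{\mathbf X}\in\mathcal F$.

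\emph{Optimality.} Let $p^\star$ be the SDP value. Because $\mathcal R^{(i)}\supseteq\mathcal F$, each master value satisfies $\ip{\mathbf C}{\mathbf X^{(i)}}\le p^\star$. The master values are nondecreasing because the feasible sets are nested. Continuity of the objective gives $\ip{\mathbf C}{\bar{\mathbf X}}\le p^\star$, and feasibility of $\bar{\mathbf X}$ gives the reverse inequality, so $\bar{\mathbf X}$ is optimal.

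The main obstacle is the key step. One has to use correctly that cuts from all earlier subsequence indices constrain later iterates, and that the optimal cut value controls the violation uniformly, which the finite constant $L_{\mathcal X}(\phi)$ handles. The remaining steps are routine compactness and continuity bookkeeping, including continuity of $g_\phi$ via the closed form.
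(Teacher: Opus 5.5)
Your proof is correct, but its key step differs from the paper's.

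\textbf{The paper's route.} The paper does not argue along the convergent subsequence directly. It reuses the volume-packing argument of Lemma~\ref{lem:eventual_eps_psd_gauge}: each cut at a failing iterate excludes an $\mathcal X$-ball around it, so for every $p$ only finitely many iterates satisfy $\lambda_{\min}(X^{(i)})\le -1/p$. A non-terminating run also has $\lambda_{\min}(X^{(i)})\le 0$ throughout. Together these give $\lambda_{\min}(X^{(i_j)})\to 0$, and continuity of $\lambda_{\min}$ gives $\bar X\succeq 0$.

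\textbf{Your route.} You give the classical Kelley argument. Later iterates satisfy earlier cuts, and the uniform constant $L_{\mathcal X}(\phi)$ turns this into $g_\phi(X^{(i_j)})\ge -L_{\mathcal X}(\phi)\|\bar X-X^{(i_j)}\|_{\mathcal X}$. Continuity of $g_\phi$ and completeness of the oracle then give $\bar X\succeq 0$.

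\textbf{What each buys.} Your argument is more elementary and self-contained: it needs no volume comparison and no use of $\phi(e_1)$. You could even skip the closed form, since \eqref{eq:Lipschitz_gauge} already makes $g_\phi$ Lipschitz with constant $L_{\mathcal X}(\phi)$, as a minimum of uniformly Lipschitz linear functions. The paper's argument, in exchange, yields an explicit iteration bound $N^\varepsilon_{\mathcal X}(\mathcal A,\phi)$ for every tolerance. Section~\ref{subsec:hierarchies} uses that bound to compare gauges and relaxations; your qualitative argument does not provide it.

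\textbf{The stopping test.} You do not need to reinterpret the test as $\lambda_1^{(i)}\ge 0$. As written, the algorithm returns only when $\lambda_{\min}(X^{(i)})>0$. An iterate with $\lambda_{\min}(X^{(i)})=0$ simply produces the trivial cut $S^{(i)}=0$. Your infinite-run argument never uses strict negativity of $\lambda_{\min}(X^{(i)})$, so it covers that case unchanged.
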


\begin{proof}
If Algorithm~\ref{alg:master_oracle_gauge} terminates finitely, the returned
point satisfies $\lambda_{\min}(\mathbf X^{(i)})>0$. Since $X^{(i)}$ solves the relaxed problem $\mathcal{M}_i$ in \eqref{eq:Mi} and $\lambda_{\min}(\mathbf X^{(i)})>0$, $X^{(i)}$ is feasible for~\eqref{eq:sdp}. Hence, $X^{(i)}$ solves~\eqref{eq:sdp}.
Assume now that the algorithm generates an infinite sequence. Let $\bar{\mathbf X}$ be
any limit point of $\{\mathbf X^{(i)}\}_{i\ge 1}$, and take a subsequence
$\mathbf X^{(i_j)}\to \bar{\mathbf X}$. Define
$v_i:=\ip{\mathbf C}{\mathbf X^{(i)}}$. Since
$\mathcal R^{(i+1)}\subseteq \mathcal R^{(i)}$, the sequence $(v_i)_i$ is
nondecreasing; and since
$\mathcal F\subseteq \mathcal R^{(i)}$ for all $i$, it is bounded above by
\[
v^\star:=\min\{\ip{\mathbf C}{\mathbf X}:\ \mathbf X\in\mathcal F \neq \varnothing\}.
\]
Hence $v_i\to \bar v\le v^\star$ for some $\bar v$.

Next, the proof of Lemma~\ref{lem:eventual_eps_psd_gauge} shows that, for each
$p\in\mathbb N$, only finitely many iterates satisfy $\lambda_{\min}(\mathbf X^{(i)})\le -1/p$. Since the zero-tolerance run does not terminate, we have
$\lambda_{\min}(\mathbf X^{(i)})\le 0$ for every $i$, and therefore, along the
subsequence chosen above, $\lambda_{\min}(\mathbf X^{(i_j)})\to 0$. By continuity of $\lambda_{\min}$, $\lambda_{\min}(\bar{\mathbf X})=0$, hence $\bar{\mathbf X}\succeq \mathbf 0$. Since all iterates lie in the closed set $\mathcal R$ and the affine constraints are preserved in the limit, we have $\bar{\mathbf X}\in\mathcal F$. Finally, continuity of the objective gives
\[
\ip{\mathbf C}{\bar{\mathbf X}}
=
\lim_{j\to\infty}\ip{\mathbf C}{\mathbf X^{(i_j)}}
=
\lim_{j\to\infty} v_{i_j}
=
\bar v
\le v^\star.
\]
But $\bar{\mathbf X}\in\mathcal F$ implies $\ip{\mathbf C}{\bar{\mathbf X}}\ge v^\star$.
Therefore $\ip{\mathbf C}{\bar{\mathbf X}}=v^\star$, and $\bar{\mathbf X}$ is optimal.
\end{proof}


\subsection{Illustration and comparison}\label{subsec:hierarchies}The above analysis follows the proof in~\cite{bertsimas_cory_wright_2020}, extending it to any arbitrary norm $\mathcal{X}$ in place of the Frobenius norm. This allows us, now, to exhibit a strict hierarchy between the considered instantiations of the framework, $\ell_p$-gauge cuts for $p\in[1,\infty]$,  rank-$k$ cuts for $k\in[n]$, and factor-width-$m$ relaxations for $m\in[n]$, whereas the termination bounds $N^\varepsilon_\mathcal{X}$ may collapse to equalities in the Frobenius setting.
Let us define
$$\|X\|_{\mathcal X}:=\|X\|_{\mathcal X_0}+\sum_{m=3}^n \delta_m \beta_m(X),
\text{ where } \|X\|_{\mathcal X_0}
:=
\max\bigl\{\|Xu\|_2 : u\in\mathbb R^n,\ \|u\|_2=1\bigr\}$$ 
is the operator norm of $X\in\mathbb S^n$ induced by $\ell_2$ on $\mathbb R^n$~\cite[Def.~5.6.1]{horn_johnson_2013}, and 
\[
\beta_m(X):=\max_{\substack{I\subseteq[n]\\ |I|=m}}\|X_I\|_{\mathcal{X}_0}^\circ,
\]
with $m=3,\dots,n$, and $0<\delta_n<\cdots<\delta_3<1$.



Let $1\le p\le\infty$ and $q$ be the dual exponent. Since $\|S\|_{\mathcal X_0} ^\circ=\operatorname{tr}(S)$ for $S\in\Snp$,
\[
L_{\mathcal X_0}(\ell_p)
=
\max\Bigl\{\mathbf 1^\top z : z\in\mathbb R_+^n,\ \|z\|_p\le1\Bigr\}
=
n^{1/q}
\]
Moreover, by Theorem~\ref{cor:sparsified_oracle}, one has, for any rank $k\in[n]$
\[
L_{\mathcal X_0}(\ell_p^{\langle k\rangle})
=
\max\Bigl\{\mathbf 1^\top z : z\in\mathbb R_+^n,\ \|z\|_p\le1,\ \|z\|_0\le k\Bigr\}
=
k^{1/q}.
\]
Therefore, 
 both sequences $(N^\varepsilon_\mathcal{X}(\mathcal{A},\ell_p))_{p\ge 1}$ and $(N^\varepsilon_\mathcal{X}(\mathcal{A},\ell^{\langle k\rangle}_p))_{k\ge 1}$ for a fixed $p>1$, are strictly increasing.
Note in passing that, for $p=1$, one has $(\ell_1^{\langle k\rangle})^\circ(y)=\|\operatorname{trun}_k(y)\|_\infty=\|y\|_\infty$ so the rank limit has no impact on spectral-$\ell_1$ separation.

We now build the coefficients of $\mathcal{X}$ so that,  for any fixed gauge $\phi$, the sequence $(N^\varepsilon_\mathcal{X}(\mathcal{A}_m,\phi))_{m\ge 1}$ is strictly decreasing along the factor-width relaxation hierarchy \eqref{eq:hierarchy-outer-approx}: $\mathcal{A}_1=\mathcal{LP}$, $\mathcal{A}_m=\mathbb{S}^{n,m}_+$ for $m\in\{2,\ldots,n\}$.
First, 
consider
\[
W^{(2)}:=
\begin{pmatrix}
a & c\\
c & 1-a
\end{pmatrix},
\quad
0<a<1,
\quad
\sqrt{a(1-a)}<c\le \frac12,
\]
then $W^{(2)}\in \mathcal{LP}\setminus \mathbb{S}^{n,2}_+$, $\operatorname{tr}(W^{(2)})=1$, 
$\|W^{(2)}\|_{\mathcal X_0}>1$, and thus $\alpha_{\mathcal X_0}(\mathcal{LP})>1$.
On the other hand, if $X\in \mathbb{S}^{n,2}_+=\mathcal{SOC}$ and $\operatorname{tr}(X)=1$, then for every
$u\in\mathbb R^n$ with $\|u\|_2=1$,
$$|u^\top Xu| 
\le
\sum_{j,l} |X_{jl}|\,|u_j|\,|u_l|
\le
\Bigl(\sum_j \sqrt{X_{jj}}\,|u_j|\Bigr)^2
\le \Bigl(\sum_j X_{jj}\Bigr)\Bigl(\sum_j u_j^2\Bigr) = 1.$$
Hence $\alpha_{\mathcal X_0}(\mathbb{S}^{n,m}_+)=1$ for every $m=2,\dots,n$, since
$\alpha_{\mathcal X_0}(\mathbb{S}^{n,m}_+) \le \alpha_{\mathcal X_0}(\mathbb{S}^{n,2}_+)\le 1$
and equality $u^\top Xu=1$ is attained by any rank-one matrix $X$ with trace $1$.

Given $m\in\{2,\dots,n-1\}$, if $X\in \mathbb{S}^{n,m+1}_+$ and $\operatorname{tr}(X)=1$, then every
$(m+1)\times(m+1)$ principal submatrix $I$ is PSD, so $\|X_I\|_{\mathcal X_0} ^\circ=\operatorname{tr}(X_I)\le 1$, then $\beta_{m+1}(X)\le 1$.
Conversely, we build a matrix $W\in\mathbb{S}^{n,m}_+\setminus\mathbb{S}^{n,m+1}_+$  such that $\beta_{m+1}(W)> 1$, as follows:
\[
\widehat{W}
:=
\frac1{m+1}\bigl((1-\rho_m)I_{m+1}+\rho_m J_{m+1}\bigr),
\quad
-\frac1{m-1}\le \rho_m<-\frac1m,
\]
where $J_{m+1}$ is the all-ones matrix, and embed $\widehat{W}$ as a principal block of
an $n\times n$ matrix $W$. Then $\operatorname{tr}(W)=1$ and every $m\times m$
principal submatrix of $W$ is PSD, but $W\notin \mathbb{S}^{n,m+1}_+$ because
$\widehat W$ has eigenvalue $(1+m\rho_m)/(m+1)<0$. Moreover,
\[
\beta_{m+1}(W)
=
\|\widehat W\|^\circ_{\mathcal{X}_0}
=
\frac{m-1-2m\rho_m}{m+1}
>
1.
\]
Therefore  the coefficients $\delta_m$ can be chosen recursively for $m\ge 3$ so that the sequences $(\alpha_{\mathcal X}(\mathcal{A}_m))_{m\ge 1}$ and thus $(N^\varepsilon_\mathcal{X}(\mathcal{A}_m,\phi))_{m\ge 1}$ are strictly decreasing.

\section{Numerical experiments}\label{sec:experiments}

In this section, we analyze our proposed cutting-plane framework empirically across three aspects: versatility (the ease of implementing various spectral-gauge templates), relative performance (the individual stren\-gths of different templates, including standard eigencuts), and absolute performance (comparison with an off-the-shelf interior-point SDP solver). We aim to be neither exhaustive, given the huge number of combinations of the SDP problem with outer-approximation cones and gauge templates, nor exclusive, that is, selecting the best combination and refining the method specifically for it. 

Hence, our implementation of Algorithm~\ref{alg:master_oracle_gauge} follows the basic Kelley scheme, without any cut-management or acceleration strategy, adding only one cut at each iteration, namely the one with the largest violation at the current iterate.
We compare various instantiations of this framework, considering either the LP or SOC relaxations combined with some of the studied spectral-gauge templates, namely eigenvalue, Frobenius, and nuclear cuts, without or with rank restrictions. We evaluate the dual bounds computed by these combinations on two distinct classes of NP-hard problems commonly addressed through SDP relaxations: box-constrained quadratic programs (BoxQP)~\cite{burer_vandenbussche_2009,dey_etal_2022}, and sparse principal component analysis (SPCA)~\cite{bertsimas_cory_wright_2020}. 

All numerical experiments are conducted on an Apple M3 Pro with a 5-core 4.05\,GHz performance CPU, a 6-core 2.75\,GHz efficiency CPU, 
and 36\,GB RAM. The code\footnote{The code is available at \url{https://github.com/sofdem/sdpgauge26}.} is implemented in Python~3.12, and the eigenvalue decompositions called for separation  are computed with \texttt{numpy.linalg.eigh}.
The master problem $\mathcal{M}_i$~\eqref{eq:Mi} is solved using the homogeneous barrier algorithm in Gurobi~v13.0, on both LP and SOC formulations. 
We measure the performance 
of Algorithm~\ref{alg:master_oracle_gauge}
using the \textit{gap closed}~\cite{dey_etal_2022}:
\begin{align*}
    GC^T = 100 \cdot \frac{M\!P_{T}-LP}{best-LP},
\end{align*}
where $M\!P_{T}$ is the value of the last master problem solved by Algorithm~\ref{alg:master_oracle_gauge} before the time limit $T$ and $LP$ is the optimal value of the initial LP relaxation~\eqref{eq:LP_def}. For small instances (typically, $n\leq 100$), $best$ is the optimal value of the SDP relaxation~\eqref{eq:sdp}, computed by running the primal-dual interior-point solver \texttt{MOSEK}~v11.1.10 with tolerance $10^{-6}$. 
We indicate with $t_{sdp}$ the corresponding \texttt{MOSEK} computation time, and evaluate the gap closed at $T\in\{t_{sdp},60s\}$.
Since interior-point methods become expensive in time and memory as $n$ grows, for larger instances, 
we define $best$ as the largest dual bound obtained by any tested cut strategy within 30 minutes. 
We then evaluate the gap closed relative to this virtual best strategy at $T\in\{60s,1800s\}$.

\paragraph{Cut strategies} 
We implement several spectral-gauge cut templates: eigenvalue cuts (Corollary~\ref{cor:rank1}), Frobenius cuts (Corollary~\ref{cor:frob}), nuclear cuts (Corollary~\ref{cor:projector}).
In the tables below, the results obtained using these strategies are reported in the columns labeled $\ell_1$, $\ell_2$, and $\ell_\infty$, respectively.
We also generate spectral cuts
 using a dynamic rank parameterization (Theorem~\ref{cor:sparsified_oracle}). This is implemented by selecting the negative eigenvalues belonging to the interval $[\lambda_1(X),0.9\lambda_1(X)]$, that is, those closest to the smallest one. The columns labeled as $\ell_*^{90\%}$ refer to the results obtained with this strategy.  
In this same dynamic rank setting, we additionally consider selecting a varying number of the most negative eigenvalues. This number changes across iterations and is selected from a discrete set whose dimension depends on the problem dimension. For instance, for $n=20$, we randomly select a number in $[1,5,10]$ or all eigenvalues, whereas for $n=250$, we consider $[1,5,10,20]$ or all eigenvalues. 
We use the notation $\ell_*^{\textrm{rand}}$ for this last case.
\subsection{Box-constrained quadratic programs}
We consider the problem
$$\min x^\top Q x + c^\top x: x\in [0,1]^n,$$
where $Q \in \Sn$ is not positive-definite and $c \in \R^n$. A standard way to obtain a tractable bound is to use an SDP relaxation~\cite{dey_etal_2022,locatelli_etal_2025}. The lifting approach linearizes the quadratic form $x^\top Q x = \tr(Q xx^\top) = \ip{Q}{xx^\top}$ by introducing a symmetric matrix variable $X=xx^\top$. Since the set of matrices of the form $xx^\top$ is nonconvex, the equality $X=xx^\top$ is relaxed to $X-xx^\top\succeq 0$. The resulting SDP relaxation is
$$\min \ip{Q}{X}+c^\top x\ :\ 
X -xx^\top \succeq 0,\  x\in[0,1]^n, \ X\in\Sn.$$
We relax the PSD condition as either \eqref{eq:FW2_SOC} or \eqref{eq:LP_def}, and add McCormick inequalities:
\[
X_{jl} \geq 0,\quad
X_{jl} \leq x_j,\quad
X_{jl} \leq x_l,\quad
X_{jl} \geq x_j+x_l-1.
\]
The experiments rely on the BoxQP library: the ``basic'' instances with $n\in[20,60]$ from~\cite{vandenbussche_nemhauser_2005} and newly benchmarked sets from the generator in~\cite{burer_vandenbussche_2009}, made of 3 instances for each dimension $n\in\{30,250\}$ and  density $d\in\{25\%, 50\%, 75\%, 100\%
\}$.\footnote{The instances and generator are available at \url{https://github.com/sburer/BoxQP_instances/}.}

\paragraph{Comparison with an SDP interior-point solver}
We first evaluate the dual bounds obtained by Algorithm~\ref{alg:master_oracle_gauge} using the different strategies above for the same time $T=t_{sdp}$ required by \texttt{MOSEK} to compute the SDP bound. In Table~\ref{tab:boxqp-basic-gc}, we report the gap closed value on the ``basic" instances~\cite{vandenbussche_nemhauser_2005}, averaged over the number of instances (column $\#$) per dimension $n$, and in Table~\ref{tab:boxqp-n30-gc} (top part), the gap closed on the generated set with dimension $n=30$, averaged over the 3 instances per density $d$.




For the smallest instances ($n=20$), the LP-based cutting-plane methods close a large fraction of the SDP gap within the \texttt{MOSEK} running time.
For all other instances, except the largest one, the Frobenius cuts are particularly competitive in terms of the closed gap. In the setting of a very short  time limit, the use of the SOC relaxation is discouraged. In fact, only a few iterations of Algorithm~\ref{alg:master_oracle_gauge} are performed, leading to smaller closed gap values. As an example, the number of iterations for the case $n=20$ is an average of $6$ for SOC with eigenvalue cuts, compared with $106$ for the LP relaxation. The notation `$-$' in the table indicates that the allowed computation time was not sufficient to perform even a single iteration for at least one instance.
\begin{table}[]
\footnotesize
  \caption{BoxQP ``basic'' instances~\cite{vandenbussche_nemhauser_2005}: average gap closed $GC^T$ (\% of the SDP value) for $T=t_{sdp}$.}
    \label{tab:boxqp-basic-gc}
    \centering
    \begin{tabular}{rrrr|rrrr|rr}
    \multicolumn{4}{l}{} &\multicolumn{4}{c}{LP} &\multicolumn{2}{c}{SOC}\\
       $n$ &$d$ &$\#$ &$T$
       &$\ell_1$ &$\ell_\infty$ &$\ell_2$ &$\ell^{90\%}_\infty$
       &$\ell_1$ &$\ell_2$\\
        \hline
       20 &100  &3 &0.3  &{\bf 99.9} &99.8 &{\bf 99.9} &{\bf 99.9} &58.9 &91.7\\
       30 &[60,100]  &15 &2.3  &94.8 &94.3 &{\bf 95.6} &95.0 &46.4 &87.2\\
       40 &[30,100]  &24 &9.0  &88.6 &86.2 &{\bf 90.8} &89.5 &30.5 &83.2\\
       50 &[30,50]  &9 &25.9 &71.8 &67.0 &{\bf 75.9} &74.4 &- &-\\
       60 &20  &3 &56.8 &33.7 &28.0 &38.6 &{\bf 42.6} &- &-\\
       \hline
    \end{tabular}
\end{table}

\begin{table}[]
\footnotesize
    \caption{BoxQP instances with $n=30$: 
    average gap closed $GC^T$ (\% of the SDP value) for $T=t_{sdp}$ (top) and $T=60$ seconds (bottom).}
    \label{tab:boxqp-n30-gc}
    \centering
    \begin{tabular}{rrr|rrrrrr|rrr}
    \multicolumn{3}{l}{} &\multicolumn{6}{c}{LP} &\multicolumn{3}{c}{SOC}\\
       $d$ &$\#$ &$T$ 
       &$\ell_1$ &$\ell_\infty$ &$\ell_2$ &$\ell^{90\%}_\infty$ &$\ell^{90\%}_2$ &$\ell_\infty^{rand}$
       &$\ell_1$ &$\ell_\infty$ &$\ell_2$\\
        \hline
       25  &3 &1.4  &86.9 &85.7 &{\bf 90.9} &89.2 &87.7 &88.4 &25.7 &53.2 &61.0\\
       50  &3 &1.8  &90.4 &89.2 &91.4 &90.7 &90.9 &{\bf 91.6} &4.1 &71.1 &79.5\\
       75  &3 &3.0  &94.0 &94.1 &95.6 &94.2 &94.3 &{\bf 95.7} &35.3 &81.4 &86.7\\
       100 &3 &3.0 &95.7 &95.4 &{\bf 96.9} &95.9 &96.1 &96.8 &51.2 &84.4 &87.2\\
        \hline
25 &3 &60 &{\bf 100} &{\bf 100} &{\bf 100} &{\bf 100} &{\bf 100} &{\bf 100} &95.2 &94.6 &96.1\\
50 &3 &60 &{\bf 99.2} &96.2 &97.2 & 98.9 & 98.9 &98.2 &92.8 &92.4 &94.3\\
75 &3 &60 &{\bf 99.7} &98.7 &99.0 & 99.6 & 99.6 &99.3 &96.1 &95.7 &96.6\\
100 &3 &60 &{\bf 99.6} &98.5 &99.0 & 99.5 & 99.5 &99.3 &96.8 &96.3 &97.4\\
\hline
    \end{tabular}
\end{table}


On the generated set $n=30$, we can observe the relationship between algorithm performance and instance density. 
Regardless of the cut strategy and relaxation type, the denser the instance, the larger the gap closed. Indeed, as observed in \cite{dey_etal_2022}, when the objective function has many zero coefficients, the objective value tends to vary less across iterations of a cutting-plane algorithm. The Frobenius cuts outperform the other strategies in both the LP and SOC relaxations, and the randomized rank-setting with LP relaxation appears to be equally robust.


\paragraph{Convergence on the small instances $(n=30)$}
We now analyze how the dual bound evolves over the iterations of the cutting-plane algorithm.
At the bottom part of Table~\ref{tab:boxqp-n30-gc}, we report the average gap closed when the allowed computation time is set to $T=60$ seconds on the generated set $n=30$.
In the LP setting, the eigencuts strategy proves to be the most effective across the different density values, and all cut families could close the gap in less than one minute for the sparser instances, except for the full-rank cuts on instance \texttt{030-025-2} ($\ell_2$ stops after 120 seconds and $\ell_\infty$ after 263 seconds).
This instance is particularly challenging for the cutting-plane algorithm as the gap closed at $T = t_{sdp}$ is around $72\%$ under the best cut strategy, whereas it is around $99.8\%$ for the other two instances with $d=25\%$.

\begin{figure}
    \centering
    \subfigure[]{
    \includegraphics[width=0.45\linewidth]{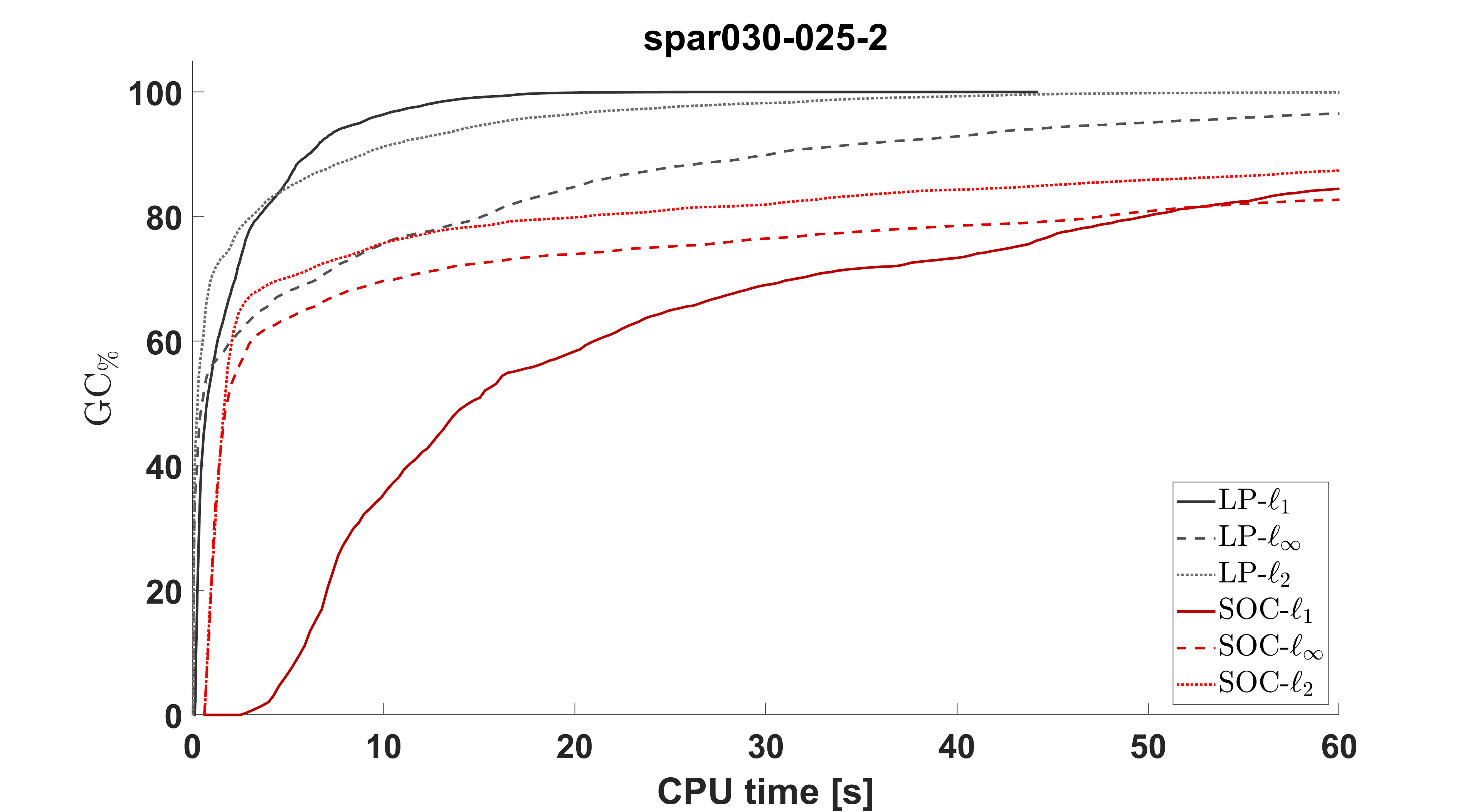}}
    \subfigure[]{
    \includegraphics[width=0.45\linewidth]{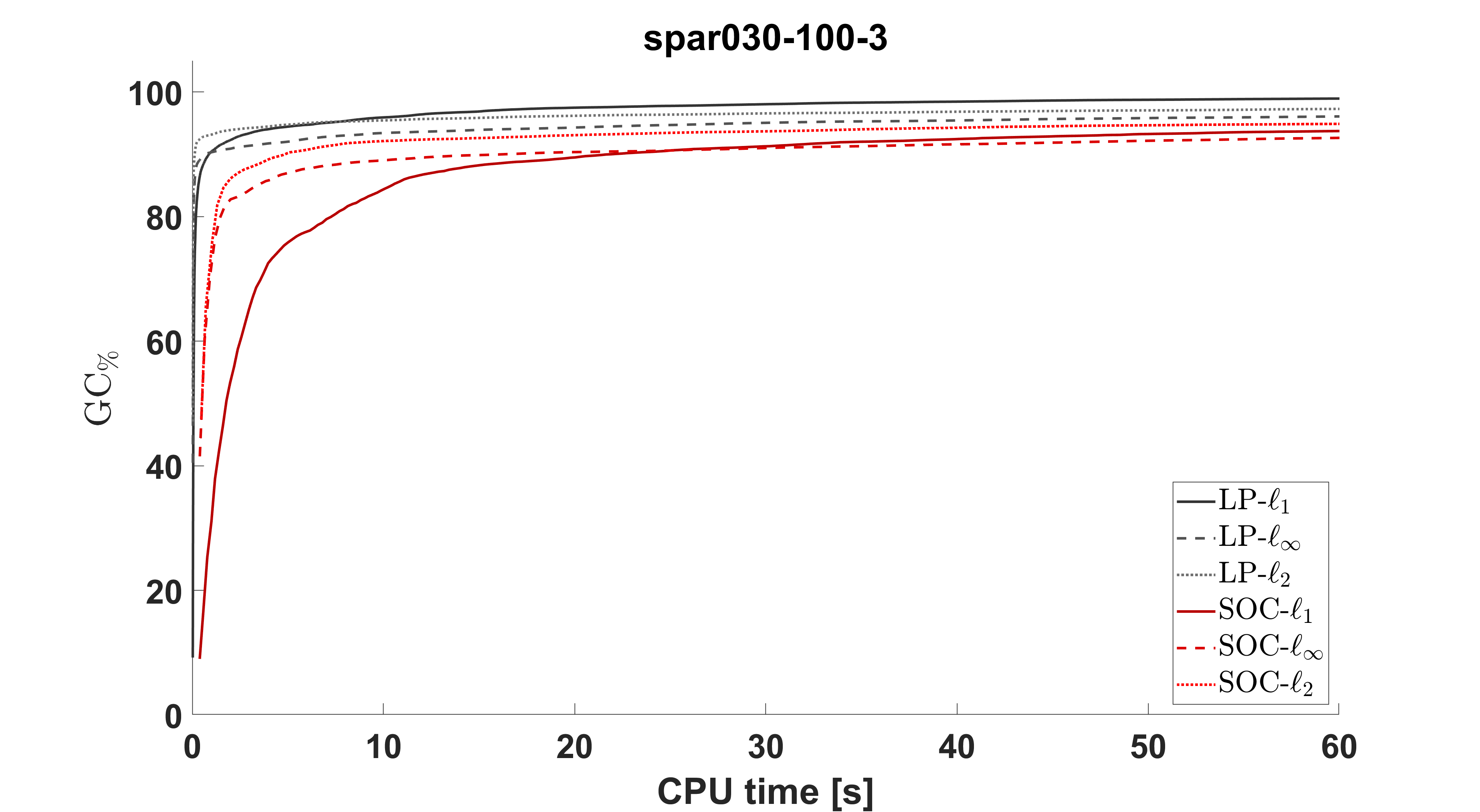}}
    \caption{Gap closed evolution for two BoxQP instances with $n=30$ considering $T = 60$ seconds.}
    \label{fig:boxqp-n30-gc-cpu_sel}
\end{figure}

Figure~\ref{fig:boxqp-n30-gc-cpu_sel} illustrates the evolution of the gap closed over the computation time, for this instance, on the left, and for the full dense case \texttt{030-100-3}, on the right. 
We observe that the LP relaxation dominates the SOC relaxation. Regarding cuts, Frobenius cuts are the most effective across the first seconds. 
After at most $10$ seconds, the LP relaxation with eigencuts allows the largest gap closed, and it is the first configuration to terminate, in 44 seconds, on the sparse instance.

We also evaluate the performance of 
the different cut templates in the LP setting 
at convergence, by running Algorithm~\ref{alg:master_oracle_gauge} on set $n=30$ for at most two hours.
In Table~\ref{tab:boxqp-convergence-cpu}, we report the average CPU time to terminate with tolerance $\varepsilon=10^{-6}$.
If the algorithm fails to converge in two hours, we report in parentheses the order of the remaining gap in the worst case.
The results confirm that eigenvalue cuts converge fastest to PSD feasibility, although they improve the dual bound slowly during the earliest iterations, as illustrated in Figure~\ref{fig:boxqp-n30-gc-cpu_sel}. The denser the instance, the longer it takes to converge. Dynamically varying the rank of the separator, as in the $\ell^{90\%}_\infty$ and $\ell^{90\%}_2$ strategies, also allows Algorithm~\ref{alg:master_oracle_gauge} to converge for two density values and to reach a good approximation of the PSD solution in the other cases.

\begin{table}[h]
\footnotesize
    \caption{BoxQP instances ($n=30$): average CPU time in seconds 
    when convergence occurs ($\varepsilon=10^{-6}$);
    otherwise, 
    in parentheses, the order of the largest remaining gap 
    $|\lambda_{\min}(X)|$ after two hours.}
    \label{tab:boxqp-convergence-cpu}
    \centering
    \begin{tabular}{rr|r|rrrrrr}
    \multicolumn{2}{l}{} &SDP &\multicolumn{6}{c}{LP}\\
       $d$ &$\#$ &$t_{sdp}$
       &$\ell_1$ &$\ell_\infty$ &$\ell_2$ &$\ell^{90\%}_\infty$ &$\ell^{90\%}_2$ &$\ell^{rand}_\infty$\\
        \hline
       25  &3 &1.4  &16.4 &89.7 &40.8 &18.0 &17.3 &31.0\\
       50  &3 &1.8 &1914 &($10^{-2}$) &($10^{-3}$) &($10^{-5}$) &($10^{-5}$) &($10^{-4}$)\\
       75  &3 &3.0   &1186 &($10^{-3}$) &($10^{-4}$) &2233 &2191 &($10^{-5}$)\\
       100  &3 &3.0  &2491 &($10^{-2}$) &($10^{-3}$) &($10^{-5}$) &($10^{-5}$) &($10^{-3}$)\\
        \end{tabular}
\end{table}

Again, we observe substantial variability in the results across instances with the same nominal size and density: although the shape of the gap closed evolution 
is comparable, the total computation time can differ significantly. For example, for $d=100$, all tested strategies converge in less than $200$ seconds on instance \texttt{030-100-2}, whereas they require more than $6200$ seconds on instance \texttt{030-100-3}. For $d=50$, the time for convergence is less than one second on instance \texttt{030-050-1} considering all strategies, whereas it exceeds more than $5400$ seconds on instance \texttt{030-050-3}.

\paragraph{Large instances ($n=250$)}
For the largest generated BoxQP instances, \texttt{MOSEK} cannot solve the SDP relaxation with the available memory or within a reasonable time, and the SOC relaxation is also too expensive. Hence, Table~\ref{tab:boxqp-n250-gc} reports the gap closed relative to the best bound obtained by any tested cut strategy in the LP setting within 30 minutes. 
Frobenius cuts dominate the short-time (i.e., $60$ seconds) bound improvement for all density values except the smallest. Over longer runs  (i.e., $1800$ seconds),  randomized rank cuts become competitive with Frobenius cuts. 
We select two instances, sparse and dense, and show the lower bound improvement over CPU time in Figure~\ref{fig:boxqp-n250-examples}. Frobenius and nuclear cuts quickly improve the bound, then plateau after about $300$ seconds for the sparse case and $100$ seconds for the dense case, and Frobenius cuts compute better bounds. The eigencut strategy leads to a very slow bound improvement, in particular, for the small instances.

\begin{table}[h]
\footnotesize
    \caption{BoxQP instances ($n=250$): average gap closed $GC^T$ relative to the best bound found within 30 minutes, for $T=60$ seconds (top) and $T=1800$ seconds (bottom).}
    \label{tab:boxqp-n250-gc}
    \centering
    \begin{tabular}{rrr|rrrrr}
    \multicolumn{3}{l}{} &\multicolumn{5}{c}{LP}\\
       $d$ &$\#$ &$T$
       &$\ell_1$ &$\ell_\infty$ &$\ell_2$ &$\ell^{90\%}_2$  &$\ell^{\mathrm{rand}}_\infty$\\
          \hline
       25 &3 &60 &0.0 & \textbf{39.4} &31.1 &0.3 &18.2\\
       50 &3 &60 &0.3 &59.8 &\textbf{62.1} &17.0 &40.6\\
       75 &3 &60 &7.9 &64.7 &\textbf{69.5} &32.8 &44.4\\
       100 &3 &60 &17.8 &70.5 &{\bf 78.4} &44.7 &63.8\\
            \hline
       25 &3 &1800 &5.9 &87.3 &\textbf{100} &75.1 &95.0\\
       50 &3 &1800 &35.6 &93.2 &\textbf{100} &89.3 &97.2\\
       75 &3 &1800 &51.8 &95.3 &\textbf{100} &91.9 &97.6\\
       100 &3 &1800 &58.6 &96.3 &\textbf{100} &93.5 &98.4\\
    \end{tabular}

\end{table}

\begin{figure}
        \centering   \subfigure[]{
        \includegraphics[width=0.48\linewidth]{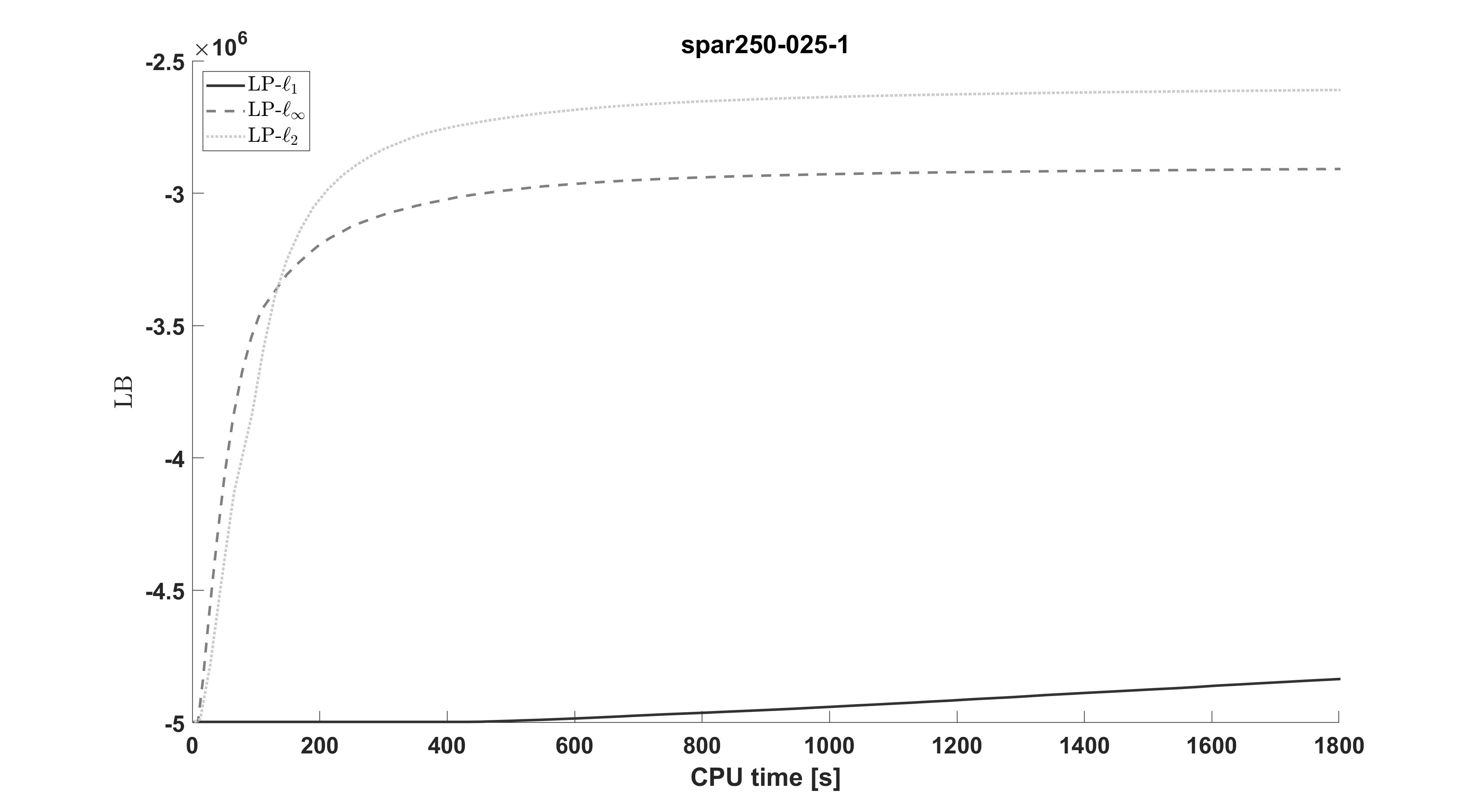}}
        \subfigure[]{
        \includegraphics[width=0.48\linewidth]{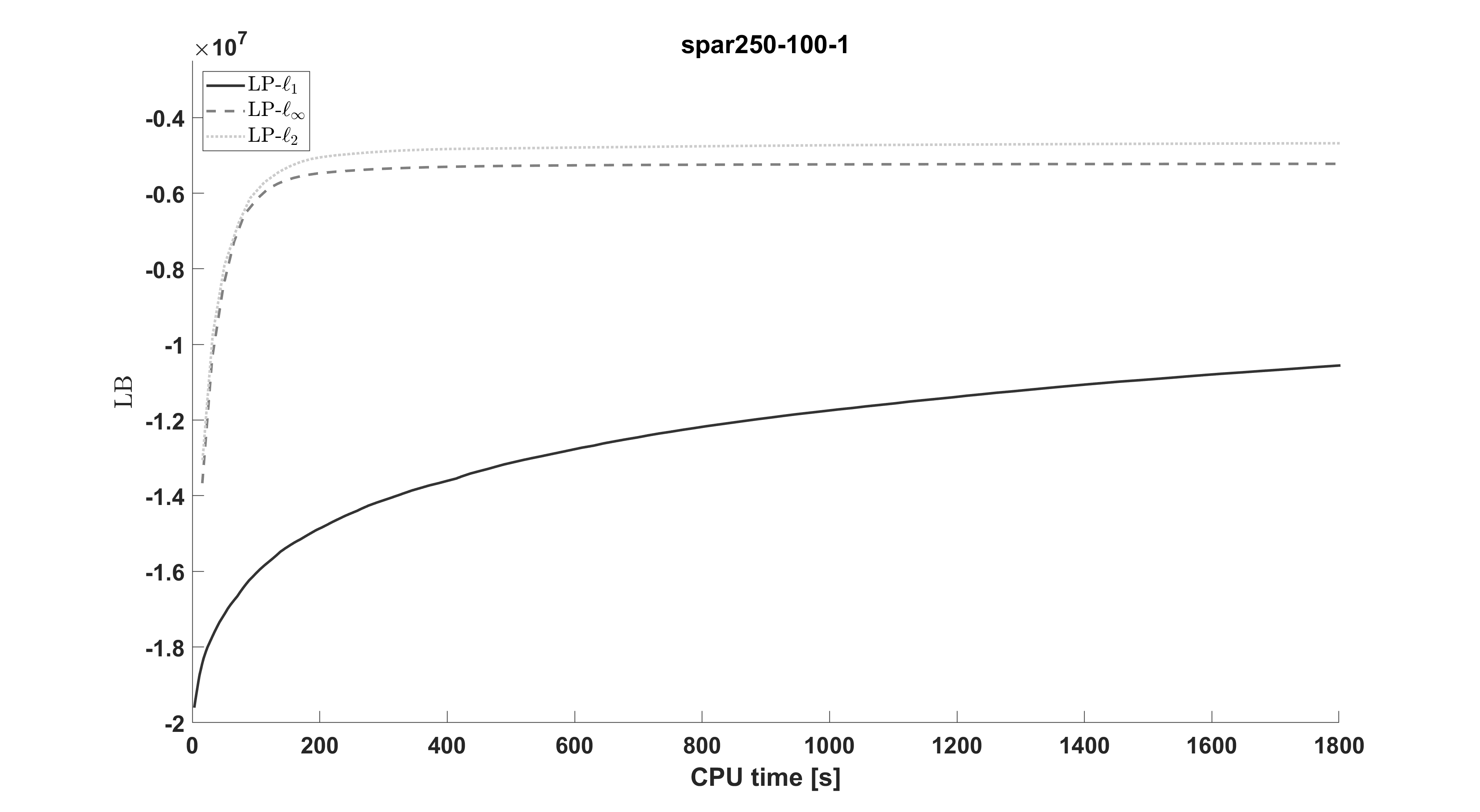}}
    \caption{Lower bound evolution for two BoxQP instances with $n=250$ over $T = 1800$ seconds.}
    \label{fig:boxqp-n250-examples}
\end{figure}

Enforcing a maximum rank bound $m\in[n]$
has no impact on the density of the separators, which remain fully dense in these experiments. However, we observe an effect on the LP solution time immediately after the cut is added. 
As an example, 
when running $\ell^{\mathrm{rand}}_\infty$ for 30 minutes on instance \texttt{250-050-2}: on average, solving the LP takes 6 seconds after adding an eigencut (14 iterations with rank $k=1$) and 37 seconds after adding a nuclear cut (14 iterations with average rank $k=118$).





\subsection{Sparse principal component analysis}
Given a normalized and centered data matrix $A\in\R^{p\times n}$ with $p$ observations in an $n$-dimensional feature space, let $\Sigma:=\frac{1}{p-1}A^\top A\in\Sn$ be the sample covariance matrix. Sparse principal component analysis (SPCA) seeks directions that explain substantial variance while promoting sparsity in the loading vector. The standard formulation is
$$\max x^\top \Sigma x: \|x\|_2 = 1,\: \|x\|_0 \leq t, \: x\in\R^n,$$
where $\|x\|_0$ denotes the number of nonzero entries in $x$. Because of the cardinality constraint $\|x\|_0 \leq t$, this problem is NP-hard, but it admits an SDP relaxation, by setting $X=xx^\top$.
The normalization constraint $\|x\|_2 = 1$ implies $\tr(X)=1$, $X\succeq 0$, and $\operatorname{rank}(X)=1$, 
and $\|x\|_0\le t$ implies $\|X\|_1=\sum_{j,l}|X_{jl}|\le t$
. Dropping the rank constraint 
yields the convex relaxation in~\cite{daspremont_etal_2007}:
\begin{equation}
\min  \ip{-\Sigma}{X}\ :\ 
        \tr(X) = 1,\
        X \succeq 0,\
        \|X\|_1 \leq t,\ {X\in\Sn}.
\end{equation}

The SPCA instances are selected as in~\cite{bertsimas_cory_wright_2020} from the UCI ML repository~\cite{lichman_2013} and we arbitrarily fix density $t=10$. 
For each instance, we first cleaned the data to work only with numerical predictive variables, following the UCI repository documentation.
Features with zero variance led to zero standard deviation and \texttt{NaN} values after standardization. We removed these features before constructing $\Sigma$ (i.e., deleted their rows/columns). This affected the ionosphere, lung, arrhythmia, gait, gastro, and micromass datasets. Reported dimensions reflect this preprocessing.

\paragraph{Small instances ($n<250$)} 
We first focus on the smallest SPCA instances, for which a lower bound can be computed in seconds by solving the SDP relaxation with \texttt{MOSEK}. As before, we consider two settings for the maximum computation time $T$. The top of Table~\ref{tab:spca-small} reports results with $T=t_{sdp}$, while the bottom part reports results obtained with our algorithm running for $60$ seconds. Unlike the BoxQP instances, the SOC relaxation is competitive in short-time settings, particularly when using the Frobenius cuts template. When the time limit is extended to $60$ seconds, our algorithm shows comparable results across the two relaxations and different templates for the two smallest instances ($n=13$). For the other instances, eigencuts and dynamic rank strategy yield the best performance in each of the considered relaxations and, overall, in the SOC relaxation.
\begin{table}[]
\footnotesize
    \caption{SPCA small instances with $t=10$: average gap closed $GC^T$ (\% of the SDP value) for $T=t_{sdp}$ (top) and $T=60$ seconds (bottom).} 
    \label{tab:spca-small}
    \centering
    \begin{tabular}{rrr|rrrr|rrrr}
    \multicolumn{3}{l}{} &\multicolumn{4}{c}{LP} &\multicolumn{3}{c}{SOC}\\
       name &$n$ &$T$ 
       &$\ell_1$ &$\ell_\infty$ &$\ell_2$ &$\ell^{90\%}_2$
       &$\ell_1$ &$\ell_\infty$ &$\ell_2$ &$\ell^{90\%}_2$\\
        \hline
pitprops &13 &0.05 &88.8 &86.7 &89.0 &90.3 &92.8 &83.0 &{\bf 95.2} &92.1\\
wine &13 &0.03 &90.5 &84.6 &89.2 &90.9 &91.3 &83.5 &{\bf 95.1} &91.3\\
ionosphere &34 &0.34 &82.1 &58.5 &76.9 &{\bf 82.4} &48.6 &27.2 &49.6 &48.6\\
lung &54 &1.03 &69.1 &56.2 &65.6 &68.6 &67.4 &49.6 &{\bf 80.2} &67.4\\
geography &68 &2.66 &74.1 &6.7 &66.8 &75.2 &{\bf 87.2} &75.5 &85.1 &84.9\\
communities &101 &32.3 &75.6 &34.8 &57.5 &74.2 &79.2 &28.7 &73.9 &{\bf 79.5}\\
\hline
pitprops &13 &60 &{\bf 100} &{\bf 100} &{\bf 100} &{\bf 100} &{\bf 100} &98.5 &99.8 &{\bf 100}\\
wine &13 &60 &{\bf 100} &{\bf 100} &{\bf 100} &{\bf 100} &{\bf 100} &99.0 &99.9 &{\bf 100}\\
ionosphere &34 &60 &91.2 &69.4 &85.5 &90.8 &{\bf 93.6} &62.3 &90.9 &93.4\\
lung &54 &60 &83.1 &59.9 &77.1 &83.1 &84.6 &52.9 &85.0 &{\bf 86.0}\\
geography &68 &60 &85.3 &19.3 &73.1 &85.0 &{\bf 92.2} &76.2 &88.2 &90.9\\
communities &101 &60 &77.2 &35.1 &59.9 &76.1 &81.3 &29.1 &74.8 &{\bf 81.7}\\ \hline   
\end{tabular}
\end{table}
\begin{figure}
        \centering   \subfigure[]{
        \includegraphics[width=0.48\linewidth]{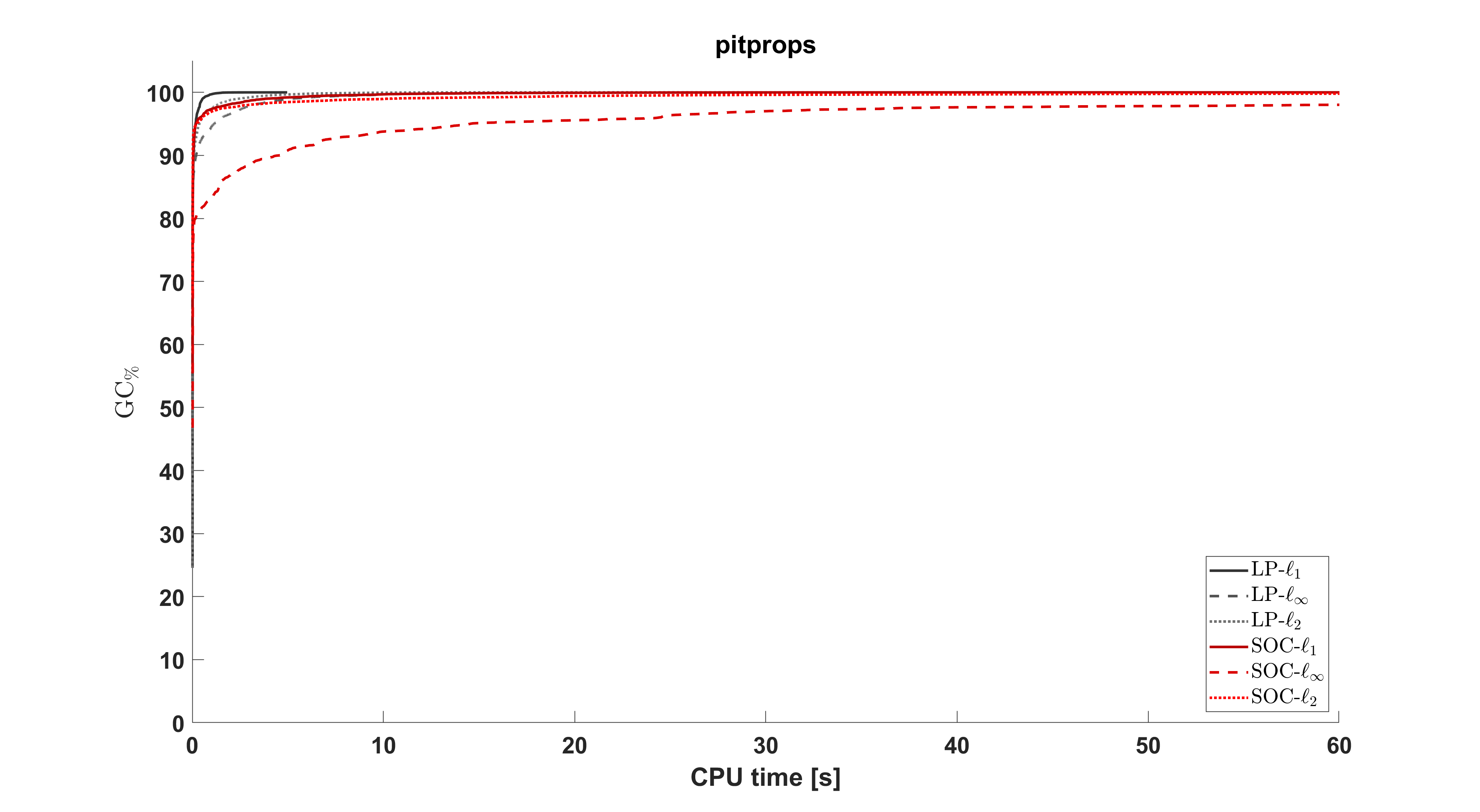}}
        \subfigure[]{
        \includegraphics[width=0.48\linewidth]{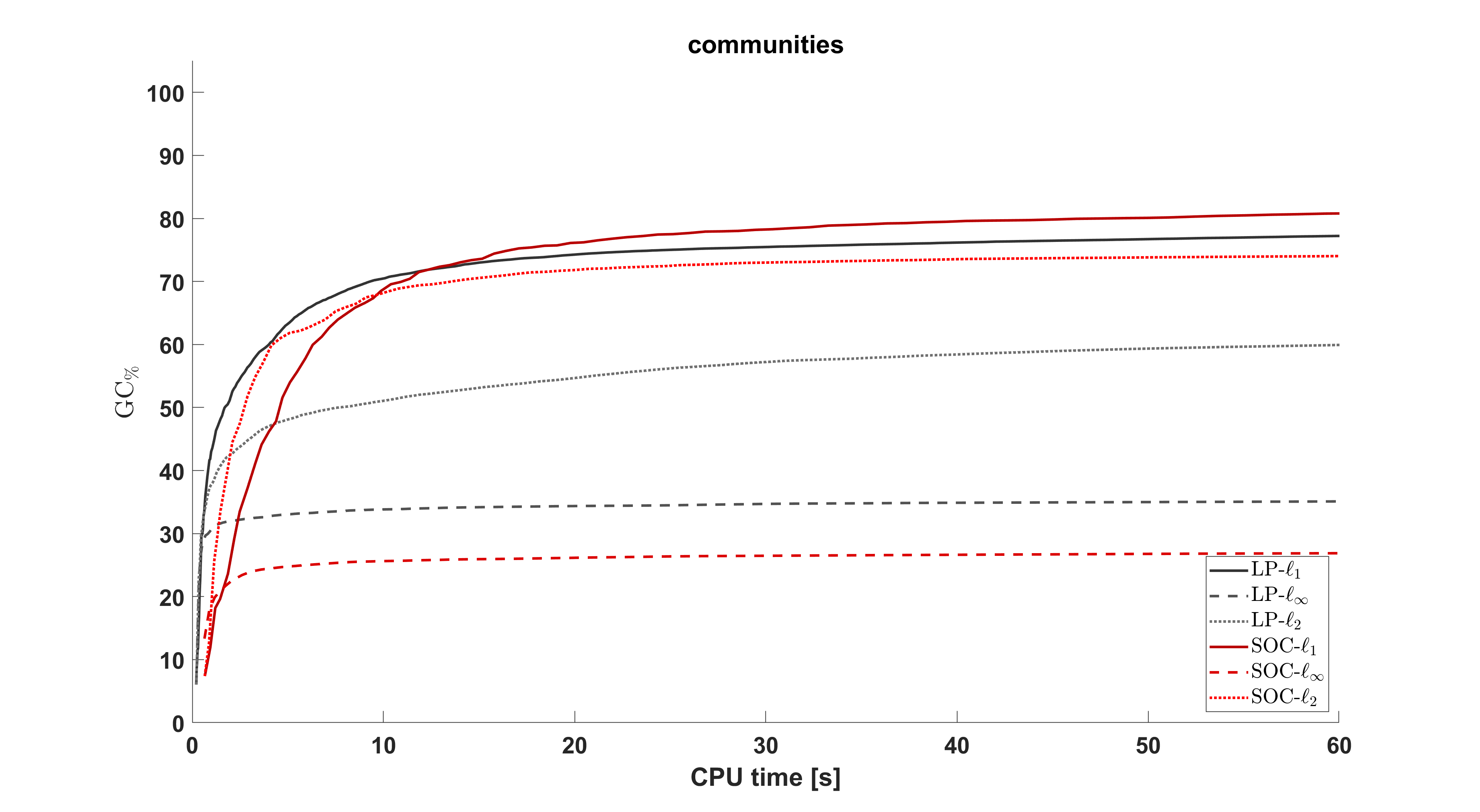}}
    \caption{Gap closed evolution for two small SPCA instances with (a) $n=13$ and (b) $n=101$.} 
    \label{fig:small-SPCA-examples}
\end{figure}
The evolution of the gap-closed is given in Figure~\ref{fig:small-SPCA-examples} for the instances \texttt{pitprops} ($n=13$) and \texttt{communities} ($n=101$). For the first case, we can see that once the relaxation is fixed, the eigencuts strategy closes the gap the fastest, followed by the Frobenius cuts and finally the nuclear cuts.  We mention that in the first $0.15$ seconds, the SOC relaxation with either the Frobenius cuts or the eigencuts has the highest closed gap. After that, this value is overcome by combining the LP relaxation with the eigencuts, which allows the gap to be closed in less than $10$ seconds. This behavior is reversed for the other instance, where the LP relaxation with eigencuts dominates in the first $10$ seconds, while the SOC relaxation allows better results in the long run. However, once the relaxation is fixed, the performance of the different cut strategies is the same as in the smallest case.


\paragraph{Large instances ($n\geq 250$)} 
On larger SPCA instances, \texttt{MOSEK} is unable to find a solution due to memory issues, so we compute the gap closed relative to the best value achieved within $30$ minutes among all cut strategies. In the upper part of Table~\ref{tab:spca-large}, we report the results of running Algorithm~\ref{alg:master_oracle_gauge} with a time limit set to $60$ seconds.  
The SOC relaxation yields the best early strategy for the smallest instance with Frobenius cuts, but its solution time exceeds 1 minute on the three largest instances. 
For all instances but two, the LP relaxation with eigencuts provides the largest gap closed. 
Even with a 1800-second time limit, SOC relaxation is not recommended for the largest instances, where LP relaxation with eigencut or dynamic rank achieves high gap-closed values. Figure~\ref{fig:large-SPCA-examples} shows the evolution of the lower bound for two selected instances, underscoring once again that the best combination, that is, relaxation and cut strategy, is instance dependent. Indeed, with $T=1800$ seconds, SOC relaxation with eigencuts yields the best lower bound for \texttt{arrhythmia} but is the worst strategy on the largest instance \texttt{micromass}.



\begin{table}[h]
\footnotesize
    \caption{SPCA large instances with $t=10$: average gap closed $GC^T$ (\% of the best value) for $T=60$ seconds (top) and $T=1800$ seconds (bottom).}
    \label{tab:spca-large}
    \centering
    \begin{tabular}{rrr|rrrr|rrrr}
    \multicolumn{3}{l}{} &\multicolumn{4}{c}{LP} &\multicolumn{4}{c}{SOC}\\
       name &$n$ &$T$ 
       &$\ell_1$ &$\ell_{\infty}$ &$\ell_2$  &$\ell_2^{90\%}$
       &$\ell_1$ &$\ell_{\infty}$ &$\ell_2$ &$\ell_2^{90\%}$\\
        \hline
arrhythmia &257 &60 &76.8 &53.7 &65.5 &76.3 &65.3 &44.0 &{\bf 80.7} &65.3\\
voice &310 &60 &{\bf 27.2} &3.9 &8.6 &24.7 &3.2 &1.3 &4.0 &3.2\\
gait &320 &60 &{\bf 62.3} &22.0 &52.3 &61.2 &32.5 &14.9 &39.3 &33.3\\
gastro &466 &60 &{\bf 12.5} &0.5 &3.0 &12.4 &- &- &- &-\\
parkinson &754 &60 &{\bf 15.0} &1.8 &10.8 &{\bf 15.0} &- &- &- &-\\
micromass &1139 &60 &19.1 &{\bf 26.1} &23.7 &19.2 &- &- &- &-\\
\hline
arrhythmia &257 &1800 &95.8 &57.5 &77.3 &96.3 &99.6 &48.5 &95.7 &{\bf 100}\\
voice &310 &1800 &98.1 &4.4 &26.3 &{\bf 100} &81.1 &3.4 &48.3 &96.2\\
gait &320 &1800 &92.1 &29.7 &63.3 &91.4 &99.0 &24.7 &86.7 &{\bf 100}\\
gastro &466 &1800 &99.9 &0.6 &21.0 &{\bf 100} &38.6 &0.5 &0.7 &63.8\\
parkinson &754 &1800 &98.6 &2.8 &39.4 &{\bf 100} &17.9 &2.3 &18.8 &21.2\\
micromass &1139 &1800 &{\bf 100} &67.0 &79.7 &98.1 &27.3 &60.7 &35.3 &27.3\\
\hline
\end{tabular}
\end{table}

\begin{figure}
        \centering   \subfigure[]{
        \includegraphics[width=0.48\linewidth]{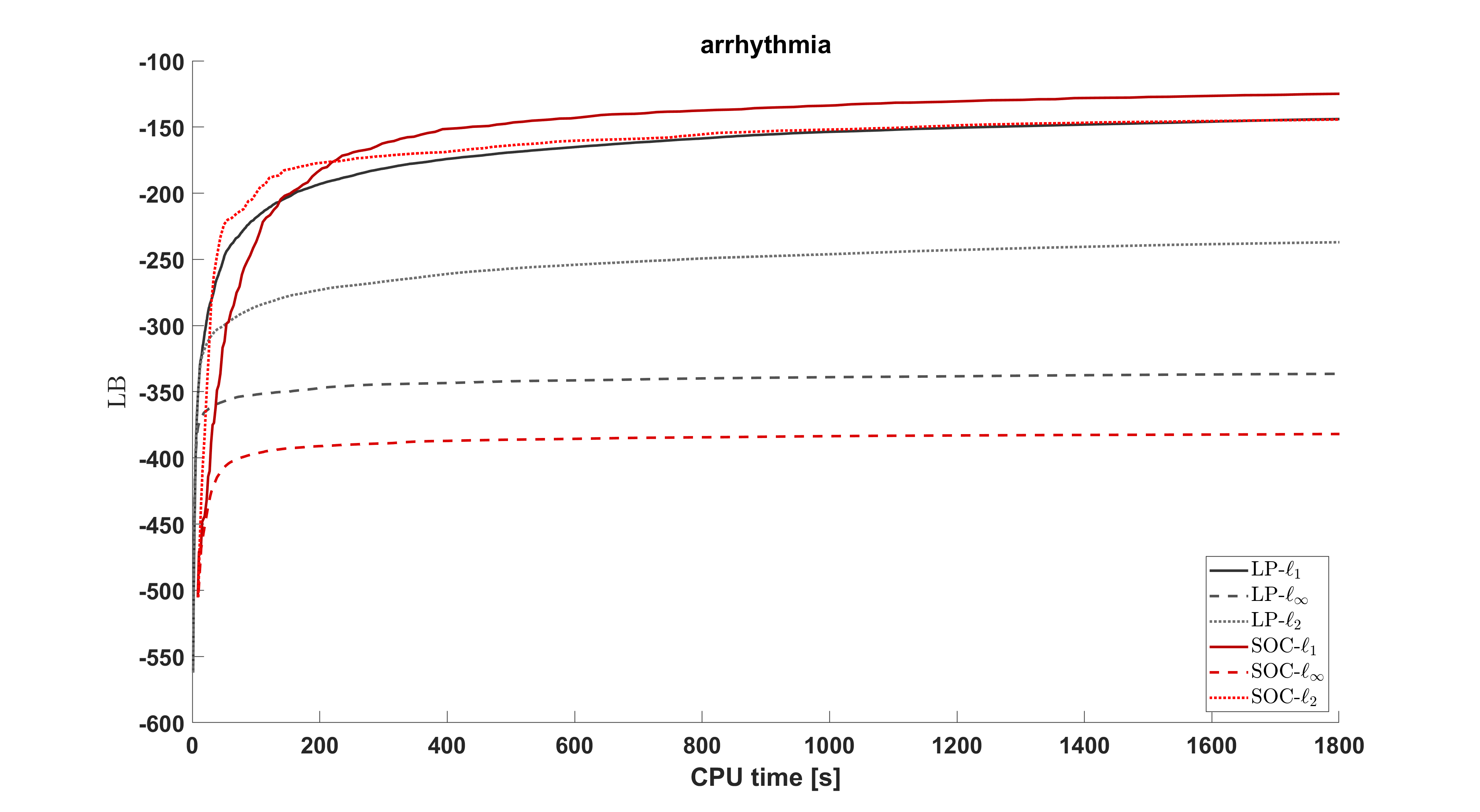}}
        \subfigure[]{
        \includegraphics[width=0.48\linewidth]{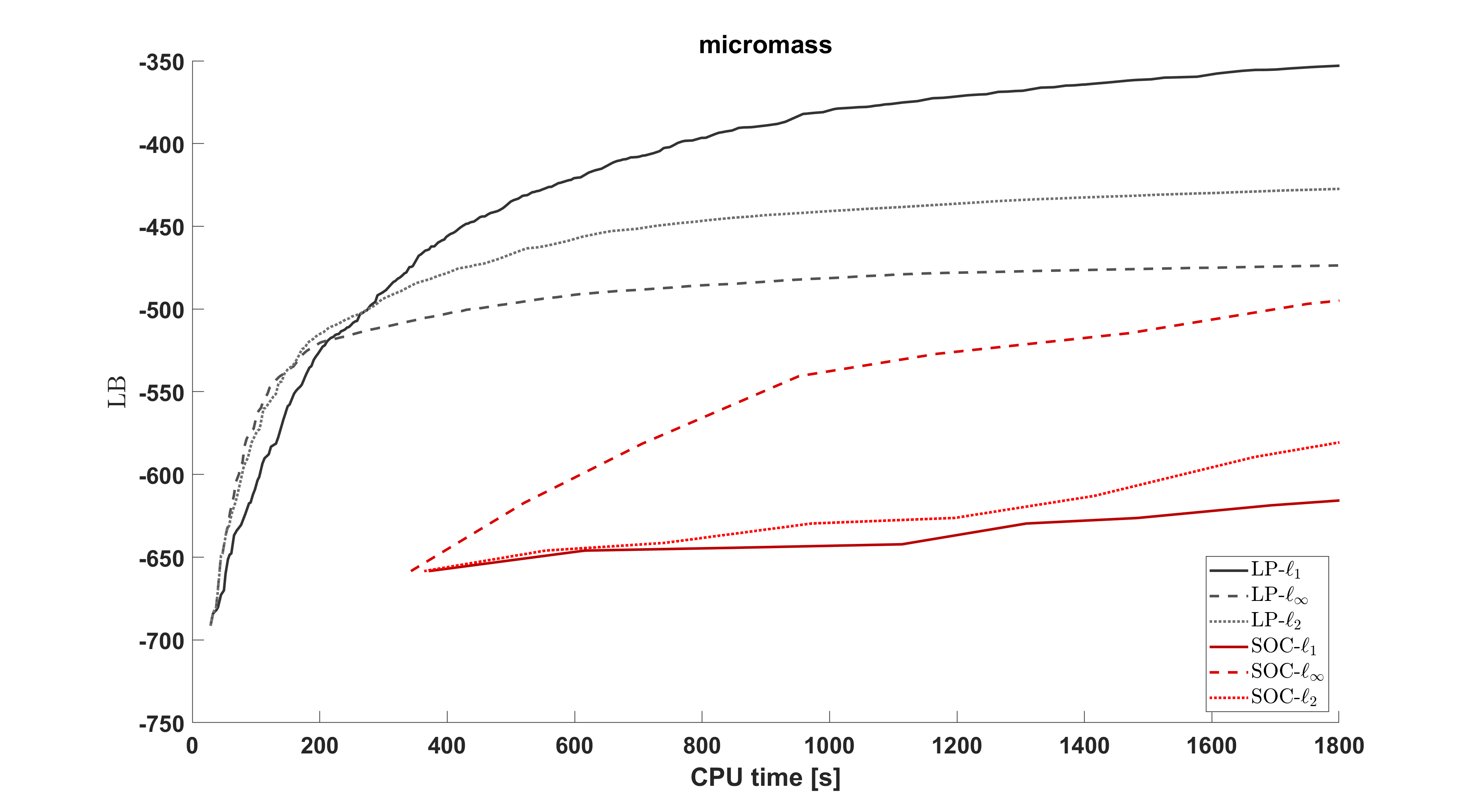}}
    \caption{Lower bound evolution for two large SPCA instances with (a) $n=257$ and (b) $n=1139$.} 
    \label{fig:large-SPCA-examples}
\end{figure}

\section{Conclusions and future work} We introduce spectral-gauge cuts, a canonical family of cutting planes for positive semidefinite (PSD) programming that leverage spectral information through gauge functions. 
The proposed framework unifies and extends existing outer-approximations of the PSD cone, taking sparsity and rank constraints into account. On the theoretical side, we characterized optimal separators, and we analyzed how the choice of gauge and normalization affects the cut strength. 

Numerical experiments on box-constrained quadratic programs and sparse principal component analysis show 
that, in general, combining an LP relaxation with the classical eigencuts or the proposed Frobenius cuts provides a robust trade-off between bound quality and computational burden. This is even more evident in large instances where interior-point methods encounter memory limitations, our 
algorithm remains applicable and produces good-quality bounds.

Theory and practice indicate that no spectral-gauge cut template is strictly dominated by the others. Hence, future work could be dedicated to designing adaptive gauge-selection and dynamic-rank strategies. 

\bibliographystyle{siamplain}
\bibliography{references}

\begin{thebibliography}{10}

\bibitem{ahmadi_majumdar_2019}
{\sc A.~A. Ahmadi and A.~Majumdar}, {\em {DSOS} and {SDSOS} optimization: More
  tractable alternatives to sum of squares and semidefinite optimization}, SIAM
  J. Appl. Algebra Geom., 3 (2019), pp.~193--230.

\bibitem{andersen_etal_2011}
{\sc M.~S. Andersen, J.~Dahl, Z.~Liu, and L.~Vandenberghe}, {\em Interior-point
  methods for large-scale cone programming}, in Optimization for Machine
  Learning, S.~Sra, S.~Nowozin, and S.~J. Wright, eds., MIT Press, 2011,
  pp.~55--83.

\bibitem{baltean_lugojan_etal_2019}
{\sc R.~Baltean-Lugojan, P.~Bonami, R.~Misener, and A.~Tramontani}, {\em
  Scoring positive semidefinite cutting planes for quadratic optimization via
  trained neural networks}.
\newblock Optimization Online preprint, 2019.

\bibitem{battista_de_santis_2025}
{\sc F.~Battista and M.~De~Santis}, {\em Dealing with inequality constraints in
  large-scale semidefinite relaxations for graph coloring and maximum clique
  problems}, {4OR}, 23 (2025), pp.~65--95.

\bibitem{bellavia_gondzio_porcelli_2021}
{\sc S.~Bellavia, J.~Gondzio, and M.~Porcelli}, {\em A relaxed interior-point
  method for low-rank semidefinite programming problems with applications to
  matrix completion}, J. Sci. Comput., 89 (2021).

\bibitem{berk_bertsimas_2019}
{\sc L.~Berk and D.~Bertsimas}, {\em Certifiably optimal sparse principal
  component analysis}, Math. Program. Comput., 11 (2019), pp.~381--420.

\bibitem{bertsimas_cory_wright_2020}
{\sc D.~Bertsimas and R.~Cory-Wright}, {\em On polyhedral and second-order cone
  decompositions of semidefinite optimization problems}, Oper. Res. Lett., 48
  (2020), pp.~78--85.

\bibitem{blekherman_parrilo_thomas_2012}
{\sc G.~Blekherman, P.~A. Parrilo, and R.~R. Thomas}, eds., {\em Semidefinite
  optimization and convex algebraic geometry}, vol.~13 of MOS-SIAM Series on
  Optimization, SIAM, 2012.

\bibitem{boman_etal_2005}
{\sc E.~G. Boman, D.~Chen, O.~Parekh, and S.~Toledo}, {\em On factor width and
  symmetric {H}-matrices}, Linear Algebra Appl., 405 (2005), pp.~239--248.

\bibitem{burer_vandenbussche_2009}
{\sc S.~Burer and D.~Vandenbussche}, {\em Globally solving box-constrained
  nonconvex quadratic programs with semidefinite-based finite
  branch-and-bound}, Comput. Optim. Appl., 43 (2009), pp.~181--195.

\bibitem{chandrasekaran_etal_2012}
{\sc V.~Chandrasekaran, B.~Recht, P.~A. Parrilo, and A.~S. Willsky}, {\em The
  convex geometry of linear inverse problems}, Found. Comput. Math., 12 (2012),
  pp.~805--849.

\bibitem{chen_etal_2025}
{\sc S.~Chen, Y.-J. Liu, J.~Yu, and W.~Zhou}, {\em A semismooth {Newton}-based
  augmented {Lagrangian} algorithm for {Lov{\'a}sz} theta {SDP} problem},
  Optimization,  (2025), pp.~1--22.

\bibitem{daniilidis_etal_2013}
{\sc A.~Daniilidis, D.~Drusvyatskiy, and A.~S. Lewis}, {\em Orthogonal
  invariance and identifiability}, SIAM J. on Matrix Anal. and Appl., 35
  (2014), pp.~580--598.

\bibitem{daspremont_etal_2007}
{\sc A.~d'Aspremont, L.~El~Ghaoui, M.~I. Jordan, and G.~R. Lanckriet}, {\em A
  direct formulation for sparse pca using semidefinite programming}, SIAM Rev.,
  49 (2007), pp.~434--448.

\bibitem{dey_etal_2022}
{\sc S.~S. Dey, A.~M. Kazachkov, A.~Lodi, and G.~Mu{\~n}oz}, {\em Cutting plane
  generation through sparse principal component analysis}, SIAM J. Optim., 32
  (2022), pp.~1319--1343.

\bibitem{dey_molinaro_2018}
{\sc S.~S. Dey and M.~Molinaro}, {\em Theoretical challenges towards
  cutting-plane selection}, Math. Program., 170 (2018), pp.~237--266.

\bibitem{gunluk_etal_2026}
{\sc O.~G{\"u}nl{\"u}k, P.~J{\"u}nger, J.~Linderoth, A.~Lodi, and J.~Luedtke},
  {\em Sparse cuts for the positive semidefinite cone}.
\newblock arXiv:2603.09864 preprint, 2026.

\bibitem{horn_johnson_2013}
{\sc R.~A. Horn and C.~R. Johnson}, {\em Matrix analysis}, Cambridge University
  Press, 2~ed., 2013.

\bibitem{kelley_1960}
{\sc J.~E. Kelley, Jr.}, {\em The cutting-plane method for solving convex
  programs}, J. Soc. Ind. Appl. Math., 8 (1960), pp.~703--712.

\bibitem{lichman_2013}
{\sc M.~Kelly, R.~Longjohn, and K.~Nottingham}, {\em The {UCI} machine learning
  repository}.
\newblock https://archive.ics.uci.edu.

\bibitem{krishnan_mitchell_2006}
{\sc K.~Krishnan and J.~E. Mitchell}, {\em A unifying framework for several
  cutting plane methods for semidefinite programming}, Optim. Methods Softw.,
  21 (2006), pp.~57--74.

\bibitem{locatelli_etal_2025}
{\sc M.~Locatelli, V.~Piccialli, and A.~M. Sudoso}, {\em Fix and bound: an
  efficient approach for solving large-scale quadratic programming problems
  with box constraints}, Math. Program. Comput., 17 (2025), pp.~231--263.

\bibitem{malick_etal_2009}
{\sc J.~Malick, J.~Povh, F.~Rendl, and A.~Wiegele}, {\em Regularization methods
  for semidefinite programming}, SIAM J. Optim., 20 (2009), pp.~336--356.

\bibitem{nesterov_nemirovskii_1994}
{\sc Y.~Nesterov and A.~Nemirovskii}, {\em Interior-point polynomial algorithms
  in convex programming}, vol.~13 of SIAM Studies in Applied Mathematics, SIAM,
  1994.

\bibitem{permenter_parrilo_2018}
{\sc F.~Permenter and P.~A. Parrilo}, {\em Partial facial reduction:
  Simplified, equivalent {SDP}s via approximations of the {PSD} cone}, Math.
  Program., 171 (2018), pp.~1--54.

\bibitem{povh_rendl_wiegele_2006}
{\sc J.~Povh, F.~Rendl, and A.~Wiegele}, {\em A boundary point method to solve
  semidefinite programs}, Computing, 78 (2006), pp.~277--286.

\bibitem{qualizza_belotti_margot_2012}
{\sc A.~Qualizza, P.~Belotti, and F.~Margot}, {\em Linear programming
  relaxations of quadratically constrained quadratic programs}, in Mixed
  Integer Nonlinear Programming, J.~Lee and S.~Leyffer, eds., vol.~154 of IMA
  Vol. in Math. and Appl., Springer, 2012, pp.~407--426.

\bibitem{ramana_1993}
{\sc M.~V. Ramana}, {\em An algorithmic analysis of multiquadratic and
  semidefinite programming problems}, PhD thesis, Johns Hopkins University,
  Baltimore, MD, 1993.

\bibitem{ramana_goldman_1995}
{\sc M.~V. Ramana and A.~J. Goldman}, {\em Some geometric results in
  semidefinite programming}, J. Glob. Optim., 7 (1995), pp.~33--50.

\bibitem{rudin_1991}
{\sc W.~Rudin}, {\em Functional Analysis}, McGraw--Hill, 2~ed., 1991.

\bibitem{sherali_dalkiran_desai_2012}
{\sc H.~D. Sherali, E.~Dalkiran, and J.~Desai}, {\em Enhancing {RLT}-based
  relaxations for polynomial programming problems via a new class of
  {$v$}-semidefinite cuts}, Comput. Optim. Appl., 52 (2012), pp.~483--506.

\bibitem{sherali_fraticelli_2002}
{\sc H.~D. Sherali and B.~M.~P. Fraticelli}, {\em Enhancing {RLT} relaxations
  via a new class of semidefinite cuts}, J. Glob. Optim., 22 (2002),
  pp.~233--261.

\bibitem{shor_1987}
{\sc N.~Z. Shor}, {\em Quadratic optimization problems}, Sov. J. Comput. Syst.
  Sci., 25 (1987), pp.~1--11.
\newblock Translated from \emph{Tekhnicheskaya Kibernetika}, No.\ 1 (1987),
  pp.\ 128--139.

\bibitem{shor_1998}
{\sc N.~Z. Shor}, {\em Nondifferentiable optimization and polynomial problems},
  vol.~24 of Nonconvex Optimization and Its Applications, Springer, Boston, MA,
  1998.

\bibitem{sun_etal_2020}
{\sc D.~Sun, K.-C. Toh, Y.~Yuan, and X.-Y. Zhao}, {\em {SDPNAL}+: {A} {Matlab}
  software for semidefinite programming with bound constraints}, Optim. Methods
  Softw., 35 (2020), pp.~87--115.

\bibitem{vandenberghe_boyd_1996}
{\sc L.~Vandenberghe and S.~Boyd}, {\em Semidefinite programming}, SIAM Rev.,
  38 (1996), pp.~49--95.

\bibitem{vandenbussche_nemhauser_2005}
{\sc D.~Vandenbussche and G.~Nemhauser}, {\em A branch-and-cut algorithm for
  nonconvex quadratic programs with box constraints}, Math. Program., 102
  (2005), pp.~559--575.

\bibitem{wang_tanaka_yoshise_2021}
{\sc Y.~Wang, A.~Tanaka, and A.~Yoshise}, {\em Polyhedral approximations of the
  semidefinite cone and their application}, Comput. Optim. Appl., 78 (2021),
  pp.~893--913.

\bibitem{wesselmann_suhl_2012}
{\sc F.~Wesselmann and U.~H. Suhl}, {\em Implementing cutting plane management
  and selection techniques}, tech. report, University of Paderborn, 2012.

\bibitem{yildirim_2026}
{\sc E.~A. Y{\i}ld{\i}r{\i}m}, {\em Relaxations of {KKT} conditions do not
  strengthen finite {RLT} and {SDP-RLT} bounds for nonconvex quadratic
  programs}, J. Glob. Optim., 94 (2026), pp.~891--918.

\end{thebibliography}

\medskip

\appendix
\section{Proof of Proposition~\ref{prop:sparsified_basic}}\label{app:proof_phi} We recall the two notions used below. A set $C\subseteq\mathbb{R}^n$ is \textit{balanced} if $\alpha C\subseteq C$ for every scalar $\alpha$ with $|\alpha|\le1$, and it is \textit{absorbing} if, for every $\mathbf{z}\in\mathbb{R}^n$, there exists $t>0$ such that $\mathbf{z}\in tC$.

\begin{proof}[Proof of Proposition~\ref{prop:sparsified_basic}]The set $\mathcal{A}_k(\phi)$ is compact since it is the intersection of the compact $\phi$-unit ball with the finite union of coordinate subspaces corresponding to supports of size at most $k$. Hence $\mathcal{K}_k(\phi)=\operatorname{conv}(\mathcal{A}_k(\phi))$ is compact and convex. Moreover, $\mathcal{A}_k(\phi)$ is balanced. Indeed, if $\mathbf{u}\in\mathcal{A}_k(\phi)$ and $|\alpha|\le1$, then
\[
\phi(\alpha\mathbf{u})=|\alpha|\phi(\mathbf{u})\le1,
\quad
\|\alpha\mathbf{u}\|_0\le\|\mathbf{u}\|_0\le k.
\]
Therefore, $\mathcal{K}_k(\phi)$ is balanced as well.

Let $c:=\phi(\mathbf{e}_1)>0$. Since $\phi$ is a symmetric gauge, $\phi(\mathbf{e}_i)=c$ for every $i\in[n]$, and hence $\pm\mathbf{e}_i/c\in\mathcal{A}_k(\phi)$ for every $i\in[n]$. Thus
\[
\operatorname{conv}\Big\{\pm\frac{\mathbf{e}_i}{c}:\ i\in[n]\Big\}
\subseteq
\mathcal{K}_k(\phi).
\]
The set on the left is a full-dimensional cross-polytope, so $\mathcal{K}_k(\phi)$ is absorbing.

The function $\phi^{\langle k\rangle}$ in~\eqref{eq:phi_m_def} is therefore the gauge function of a compact, convex, balanced, absorbing set. It is consequently a seminorm~\cite[Thm.~1.35]{rudin_1991}.

To see that it is positive definite, note that
\[
\mathcal{A}_k(\phi)\subseteq\{\mathbf{u}:\phi(\mathbf{u})\le1\}.
\]
Since the $\phi$-unit ball is convex, $\mathcal{K}_k(\phi)\subseteq\{\mathbf{u}:\phi(\mathbf{u})\le1\}$. Hence, whenever $\mathbf{z}\in t\mathcal{K}_k(\phi)$, we have $\phi(\mathbf{z})\le t$. Taking the infimum over such $t>0$ yields
\[
\phi(\mathbf{z})\le\phi^{\langle k\rangle}(\mathbf{z}).
\]
Thus $\phi^{\langle k\rangle}(\mathbf{z})>0$ for every $\mathbf{z}\neq\mathbf{0}$, and $\phi^{\langle k\rangle}$ is a norm.

Finally, $\mathcal{A}_k(\phi)$ is invariant under sign changes and permutations, because both $\phi$ and $\|\cdot\|_0$ are invariant under these operations. The same is true of $\mathcal{K}_k(\phi)$, and therefore of its gauge. Hence $\phi^{\langle k\rangle}$ is a symmetric gauge.
\end{proof}

\end{document}